\newtheorem{theorem}{Theorem}[section]
\newtheorem{lemma}[theorem]{Lemma}
\newtheorem{definition}[theorem]{Definition}
\newtheorem{proposition}[theorem]{Proposition}
\newtheorem{corollary}[theorem]{Corollary}
 \newtheorem{remark}[theorem]{Remark}
\newcommand{\nn}{\nonumber}
\numberwithin{equation}{section}
\title{Rough path properties for local time of symmetric $\alpha$ stable process}
\author[a,b]{Qingfeng Wang}
\address[a]{Department of Mathematical Sciences, Loughborough University, LE11 3TU, UK}
\author[a]{Huaizhong Zhao\corref{}}
\address[b]{Nottingham University Business School China, Ningbo, 315100, China}
\journal{Stochastic Processes and their Applications}
\begin{document}

\begin{abstract}  
\noindent In this paper, we first prove that the local time associated with symmetric $\alpha$-stable processes is of bounded $p$-variation for any $p>\frac{2}{\alpha-1}$ partly based on Barlow's estimation of the modulus of the local time of such processes.\,\,The fact that the local time is of bounded $p$-variation for any  $p>\frac{2}{\alpha-1}$ enables us to define the integral of the local time  $\int_{-\infty}^{\infty}\triangledown_-^{\alpha-1}f(x)d_x L_t^x$ as a Young integral for less smooth functions being of bounded $q$-varition with $1\leq q<\frac{2}{3-\alpha}$. When $q\geq \frac{2}{3-\alpha}$, Young's integration theory is no longer applicable. However, rough path theory is useful in this case. The main purpose of this paper is to establish a rough path theory for the integration with respect to the local times of symmetric $\alpha$-stable processes for $\frac{2}{3-\alpha}\leq q< 4$.
\\
\\
\noindent{\it Key words}:{ Young integral, rough path, local time,  $p$-variation, $\alpha$-stable processes,  It\^o's formula}
\end{abstract}

\maketitle

\pagenumbering{arabic}
\section{Introduction}
It{\^o} \cite{Ito1944} developed the integration theory with respect to Brownian motion and the chain rule for Brownian motion known as It{\^o}'s formula.
 We first recall It\^o's formula developed in 1944 as follows. 
 
{\bf(\bf It\^o's Theorem \bf(\bf 1944\bf))} Let $f: \mathbb{R}\to\mathbb{R}$ be a function of the class $C^2$ and $B = \{B_t, \mathcal{F}_t: 0\leq t<\infty\}$ be any Brownian motion on $(\Omega, \mathcal{F}_t)$, then
\begin{equation*}
f(B_t)=f(B_0) + \int_{0}^{t} f'(B_s)dB_s + \frac{1}{2}\int_{0}^{t}f''(B_s)ds.
\end{equation*}
 
For It\^o's formula, the contribution lies in defining the stochastic integral $\int_{} f'(B_s)dB_s$.\,An integral $\int_{} XdZ$ can be defined as a Stieltjes integral pathwise when the integrator $Z$ is of finite variation and the integrand $X$ is continuous.\,However, Brownian motion is not of bounded variation a.s. and when the integrator is of infinite variation, there did not exist an integration theory in place to use before It\^o dealt with this issue.
 
Despite a huge success, It\^o's original formula has its own limitations as it applies for Brownian motion and for functions with twice differentiability only.\,This hinders the applicability of It\^o's formula.\,\,On one hand, one often encounters the need to define the stochastic integral for a wide class of stochastic processes besides Brownian motion.\,\,Doob \cite{Doob1953} emphasized the martingale property of It\^o's integral.\,Subsequently, Doob proposed a general martingale integral after discovering the key role of the martingale property of Brownian motion in defining It\^o's integral.\,In order to build the theory, he needed to decompose the square of an $L^2$-martigale.\,\,This was done latter in \cite{Meyer62}.\,\,Based on \cite{Meyer62}, Kunita and Watanabe in \cite{Kunita1967} defined an integral provided the integrand is previsible for the case the integrator is a square integrable martingale. They generalized It\^o's formula to continuous martingales only and proved the above It\^o's formula is still valid if $ds$ is replaced by $d<X>_s$ (see Section 2, \cite{Kunita1967}) and generalized It\^o's formula to discontinuous martingales (see section 5, \cite{Kunita1967}).\,\,Meanwhile, Meyer \cite{Meyer67} extended It\^o's formula to local martingales of which the concept was introduced in \cite{Watanabe1965}. Meyer \cite{Meyer76} further extended It\^o's formula to semimartingales. 
 
On the other hand, one also encounters the restriction of using It\^o's formula in the cases when the function is not $C^2$ in the space variable.\,\,The first result in this direction is the It\^o -Tanaka's formula derived in \cite{Tanaka63} for $f(x)=|x|$ with the help of the local time.\,\,The concept of local time was first introduced in L\'evy \cite{Levy1948}.\,\,It has been indeed the wellspring of much of the extensions of It\^o's formula for functions not being $C^2$ in the space variable.\,\,Wang \cite{wang77} extended It\^o's formula to a time independent convex function which is being absolutely continuous with its first order derivative being of bounded variation.\,\,Bouleau and Yor \cite{Bouleau1981} made a further extension to absolutely continuous functions with locally bounded first order derivative. 
The idea to extend It\^o's formula to less smooth functions of Feng and Zhao in their papers \cite{Feng2006, Feng2008, Feng2010} is to establish a Young's integration theory and a rough path integration theory for local time.\,\,Young \cite{Young36} showed that the pathwise integral $\int_{} XdZ$ makes sense if X is of finite $p$-variation and Z is of finite $q$-variation where $\frac{1}{p}+\frac{1}{q}>1$, together with the condition that X and Z have no common discontinuities. The theory of rough paths was developed by  Lyons and his co-authors in a series of papers (see, e.g. \cite{Bass2002,Coutin2002, Lyons1994,Lyons98,Lyons97,Lyons1997,Lyons2002}). Rough path theory removed the restriction of $\frac{1}{p}+\frac{1}{q}>1$, hence applicable to even rougher paths.

The purpose of this paper is to establish the integral $\int_{-\infty}^{\infty}\triangledown_-^{\alpha-1}f(x)d_x L_t^x$ for symmetric $\alpha$ stable processes as a Young integral as well as a rough path integral.
 
The rest of the paper is organized as follows.\,\,In Section 2 we recall some results from the Young's integration theory and establish a Young integral of local time. In Section 3 we recall some results from the rough path theory and establish a rough path integral of local time. 
 
\section{The Young integral}
 We first recall the definition of the bounded $p$-variation  (see e.g. \cite{Young36}, \cite{Lyons2002}).
\begin{definition}\label{pvariation}
A function $f : [x',x'']\to\mathbb{R} $ is of bounded $p$-variation if
\begin{equation}
\sup_{E} \sum_{i=1}^{m} {|f(x_i)-f(x_{i-1})|}^p < \infty
\end{equation}
where $E : =\{x'=x_0<x_1<\cdots<x_m=x''\}$ is an arbitrary partition of $[x',x''].$ Here $p \geq 1$ is a fixed real number. 
\end{definition}
We present Young's integration theorem from \cite{Young36} in the following.
\begin{theorem}(Young integral)\label{YoungIntegral}
Consider a function $f$ of finite $p$-variation and a function $g$ of finite $q$-variation where $p, q>0, \frac{1}{p}+\frac{1}{q}>1,$ such that $f(x)$ and $g(x)$ have no common discontinuities, then
\begin{equation}
\int_{x'}^{x''} f(x)dg(x)=\displaystyle\lim_{m(E)\to 0}\sum_{i=1}^{m}f(\xi_i)\bigl(g(x_i)-g(x_{i-1})\bigr)
\end{equation}
is well defined. Here $\xi_i\in [x_{i-1}, x_i], m(E)=\displaystyle\sup_{1\leq i\leq m}(x_i - x_{i-1}).$
\end{theorem}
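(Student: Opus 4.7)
The plan is to follow Young's original successive-refinement strategy. First I would introduce the tagged Riemann-Stieltjes sum $S(E,\xi) = \sum_{i=1}^{m} f(\xi_i)\bigl(g(x_i) - g(x_{i-1})\bigr)$ and aim to show that $S(E,\xi)$ has a common limit as $m(E)\to 0$, independent of the tag choice. The proof reduces to establishing a Cauchy-type estimate that quantifies how $S$ changes when one mesh point is removed, together with an iteration that controls the cumulative error.

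The combinatorial heart of the argument is the \emph{one-point removal lemma}. Given a partition $E = \{x_0 < x_1 < \cdots < x_m\}$, deleting an interior point $x_j$ (and merging the two adjacent intervals with a single tag) changes $S$ by an amount bounded by a bilinear expression such as $|f(\xi_j) - f(\xi_{j+1})|\,|g(x_j) - g(x_{j-1})|$. By the pigeonhole principle applied to the $p$-variation of $f$ and the $q$-variation of $g$ across the $m-1$ interior points, at least one index $j$ satisfies
\begin{equation*}
|f(x_{j+1}) - f(x_{j-1})|^{p} \;\leq\; \frac{2\,V_p(f)}{m-1}, \qquad |g(x_{j+1}) - g(x_{j-1})|^{q} \;\leq\; \frac{2\,V_q(g)}{m-1},
\end{equation*}
where $V_p, V_q$ denote the $p$- and $q$-variation sums. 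A H\"older-type application then yields a per-step error of order $(m-1)^{-(1/p+1/q)}$.

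Iterating the removal from $m+1$ points down to the two endpoints and summing the errors gives a bound of the form $C\,\zeta(\theta)\,V_p(f)^{1/p}\,V_q(g)^{1/q}$ with $\theta = \tfrac{1}{p}+\tfrac{1}{q}>1$, where the series $\sum k^{-\theta}$ converges precisely because $\theta>1$. This is the classical Love-Young inequality, and the same telescoping controls the difference $S(E_1,\xi_1) - S(E_2,\xi_2)$ along any two partitions through their common refinement, yielding the Cauchy property as $m(E)\to 0$.

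Finally, I would use the no-common-discontinuity hypothesis to upgrade Cauchy-along-refinements to convergence as $m(E)\to 0$. Both $f$ and $g$ have at most countably many jumps, and at every partition point at least one of them is continuous, so the choice of tag $\xi_i$ becomes irrelevant in the limit. The main obstacle will be the combinatorial point-selection step: the estimate must be robust enough that the same bound survives iteration without losing the decisive exponent $\theta>1$. Verifying this, and then carefully handling the tag dependence at jumps of the integrator or integrand, is where the condition $\tfrac{1}{p}+\tfrac{1}{q}>1$ is used in an essential way and where the no-common-discontinuity hypothesis becomes indispensable.
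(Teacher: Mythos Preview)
The paper does not actually prove this theorem: it is quoted verbatim as a known result from Young's 1936 paper \cite{Young36}, with the words ``We present Young's integration theorem from \cite{Young36} in the following,'' and no proof is given. So there is no proof in the paper to compare your proposal against.

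That said, your outline is a faithful sketch of Young's original argument and of the now-standard Love--Young inequality proof: the one-point removal step, the pigeonhole selection of a good interior point, the iteration that produces a $\zeta(1/p+1/q)$ factor, and the use of the no-common-discontinuity hypothesis to pass from refinement-Cauchy to full mesh convergence are exactly the ingredients. If you were to flesh this out, the place requiring the most care is the last paragraph: showing that the Cauchy property along refinements implies convergence as $m(E)\to 0$ (rather than merely along nested sequences) needs an explicit argument comparing an arbitrary partition to a fine one, and the treatment of tags near jump points of $f$ or $g$ must be made precise. But as a plan it is correct and is the route Young himself took.
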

We also recall the integration of a sequence of functions which is also due to Young (see \cite{Young36}).
\begin{theorem}(Term by term integration)\label{TermbytermIntegration}
Let $\{f_n\}$ be a sequence of functions of finite $p$-variation converging densely to a function $f$ of finite p-variation uniformly at each point of a set A. Let $\{g_n\}$ be a sequence of functions of finite $q$-variation converging densely, and at $x', x''$ to a function g of finite $q$-variation uniformly at each point of a set B. Suppose further that $p, q>0, \frac{1}{p}+\frac{1}{q}>1,$ and that A includes the discontinuities of $g,$ B those of $f$, $A\cup B$ represents all points of $(x',x'')$. Then as $n\to\infty$
\begin{equation}
\int_{x'}^{x''}f_ndg_n\to\int_{x'}^{x''}fdg.
\end{equation}
\end{theorem}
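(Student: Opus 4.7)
The plan is to compare the Young integrals via a single common partition $E^{\ast}$ of $[x',x'']$, splitting the total error into three parts: (i) the gap between $\int_{x'}^{x''} f_n\,dg_n$ and its Riemann sum $S_n$ on $E^{\ast}$, (ii) the convergence $S_n\to S$ of the finite Riemann sums, and (iii) the gap between $S$ and $\int_{x'}^{x''} f\,dg$.

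First, I would extract from the dense-convergence hypothesis that $\sup_n V_p(f_n;[x',x''])<\infty$ and $\sup_n V_q(g_n;[x',x''])<\infty$; uniform variation control is standard in Young's formulation of dense convergence and is indispensable for what follows. Next, the fundamental Young estimate underlying Theorem~\ref{YoungIntegral}, of the form
\begin{equation*}
\left|\int_{s}^{t} F\,dG - F(s)\bigl(G(t)-G(s)\bigr)\right| \leq C_{p,q}\,V_p(F;[s,t])\,V_q(G;[s,t]),
\end{equation*}
combined with the superadditivity of the $p$- and $q$-variations, allows me, for any $\varepsilon>0$, to pick $E^{\ast}=\{x'=y_0<y_1<\cdots<y_k=x''\}$ with every interior knot in $A\cup B$, fine enough that
\begin{equation*}
\left|\int_{x'}^{x''} F\,dG - \sum_{i=1}^{k} F(y_{i-1})\bigl(G(y_i)-G(y_{i-1})\bigr)\right| < \tfrac{\varepsilon}{3}
\end{equation*}
holds for $(F,G)=(f,g)$ and, uniformly in $n$, for every $(F,G)=(f_n,g_n)$. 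The hypothesis that $A\cup B$ covers $(x',x'')$ is what permits this free placement of knots in $A\cup B$.

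With $E^{\ast}$ fixed, $S_n\to S$ reduces to pointwise convergence of $f_n$ and $g_n$ at finitely many points. The endpoints are handled by the assumption that $\{g_n\}$ converges at $x',x''$ (and $\{f_n\}$ analogously, via uniform convergence on $A$); at interior knots $y_i\in A\cup B$ the required limits follow from the mode-of-convergence hypotheses together with the inclusions $A\supseteq\mathrm{disc}(g)$ and $B\supseteq\mathrm{disc}(f)$, which guarantee that at each knot at most one of $f,g$ can jump and the sequence approximating the jumping function converges uniformly there. Finiteness of $E^{\ast}$ then gives $|S_n-S|<\varepsilon/3$ for all sufficiently large $n$, and the triangle inequality closes the proof.

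The main obstacle is Step~2, the uniform-in-$n$ Riemann-sum approximation: a \emph{single} partition $E^{\ast}$ must simultaneously approximate every $\int f_n\,dg_n$, which demands the uniform $p$- and $q$-variation bounds from Step~1, the density of $A\cup B$ in $(x',x'')$ so that knots can be slid off any given discontinuity, and the subadditive control built into the Young estimate. A secondary subtlety is verifying that the knots of $E^{\ast}$ avoid any common discontinuity of $f$ and $g$, so that $\int f\,dg$ genuinely admits a Riemann-sum approximation along $E^{\ast}$; this is arranged through the freedom granted by the density hypothesis together with the inclusions $A\supseteq\mathrm{disc}(g)$ and $B\supseteq\mathrm{disc}(f)$.
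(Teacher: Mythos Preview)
The paper does not prove this theorem; it merely recalls it as a classical result of Young \cite{Young36}, stated without proof, for later use in Theorem~\ref{LessSmoothTheorem}. So there is no ``paper's own proof'' to compare against.

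Your sketch follows the standard route that goes back to Young himself: control the error uniformly in $n$ via the Young remainder estimate, fix a single fine partition $E^\ast$ with knots placed in $A\cup B$, and then pass to the limit through the finite Riemann sums. The overall architecture is sound. One point deserves care: you assert that dense convergence in Young's sense yields $\sup_n V_p(f_n)<\infty$ and $\sup_n V_q(g_n)<\infty$. In Young's original framework this uniform bound is indeed part of the hypotheses (though sometimes absorbed into the terminology), so make sure you state it explicitly rather than treating it as a consequence. A second point: your handling of the knot convergence is slightly imprecise. At an interior knot $y_i\in A$, you need $f_n(y_i)\to f(y_i)$ (which is the content of uniform convergence on $A$) and $g_n(y_i)\to g(y_i)$; the latter does not follow automatically from ``$B$ contains the discontinuities of $f$'' but rather from the dense convergence of $g_n$ to $g$ combined with the fact that, since $A\cup B=(x',x'')$, any $y_i\notin B$ lies in $A\supseteq\mathrm{disc}(g)$---yet that only tells you $g$ may jump there, not that $g_n$ converges there. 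You should revisit Young's precise definition of dense convergence to close this step cleanly.
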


\subsection{Fractional derivative and fractional Laplacian}
 
We first recall the definition of the $\alpha^{th}$ right fractional integral of a function $g$ from \cite{Miller93}.\,\,The left fractional integral is defined similarly.\,\,The Riemann's definition of fractional integral for a suitable function $g$ is
 
\begin{equation}
_a I_x^{\alpha}g(x) = \frac{1}{\Gamma(\alpha)}\int_{a}^{x}(x-u)^{\alpha-1}g(u)du,
\end{equation}
for almost all $x$ with $-\infty\leq a<x<\infty$ and Re$(\alpha)>0,$ where $\alpha$ is a complex number in general and Re denotes its real part. For $a=0$, this is a Riemann-Liouville fractional integral
\begin{equation}\label{RL}
_0 I_x^{\alpha}g(x) = \frac{1}{\Gamma(\alpha)}\int_{0}^{x}(x-u)^{\alpha-1}g(u)du, \qquad Re(\alpha)>0.
\end{equation}

A sufficient condition that ensures the convergence of integral (\ref{RL}) is that
$
g(\frac{1}{x}) = O(x^{1-\eta}), \,\eta>0.
$
Functions with the above property are sometimes called functions of the Riemann class.\,For instance, constants are of Riemann class as well as functions such as $x^m, $ with $m>-1.$ \,For $a=-\infty$, this is the Liouville fractional integral
\begin{equation}\label{L}
_{-\infty} I_x^{\alpha}g(x) = \frac{1}{\Gamma(\alpha)}\int_{-\infty}^{x}(x-u)^{\alpha-1}g(u)du, \qquad \textrm{Re}(\alpha)>0.
\end{equation}

A sufficient condition that ensures the converges of integral (\ref{L}) is that $g(-x) = O(x^{-\alpha-\eta}), \,\eta>0,\, x\to\infty$.
Functions with the above property are sometimes called functions of the Liouville class.\,For example, functions such as $x^m,$ with $m<-\alpha<0$ are of the Liouville class.\,The fractional integral operator satisfy the semigroup property, namely
\begin{equation*}
_{a} I_x^{\alpha} ({_{a} I_x^{\beta}} )= _{a} I_x^{\alpha+\beta}, \qquad \textrm{Re}(\alpha, \beta)>0.
\end{equation*}

The right fractional derivative operator and left fractional derivative operator are defined in terms of the right fractional integral operator and left fractional integral operator in the following manner\begin{equation}
_{a}\triangledown^{\alpha}_x = \frac{d^n}{dx^n} \biggl({_{a}} I_x^{n-\alpha}\biggr),\qquad\quad\,\, \textrm{Re}(\alpha)>0, n=\lfloor \textrm{Re}(\alpha)\rfloor+1,
\end{equation}
and
\begin{equation}
_{x}\triangledown^{\alpha}_a = (-1)^n\frac{d^n}{dx^n} \biggl({_{x}} I_a^{n-\alpha}\biggr),\quad \textrm{Re}(\alpha)>0, n=\lfloor \textrm{Re}(\alpha)\rfloor+1.
\end{equation}
Next, we recall the definition of the Riesz fractional derivative in the following
\begin{definition} (see e.g. \cite{Samko87})The Riesz fractional derivative for $0<\alpha<2$ and for $-\infty< x< \infty$ is defined as $\triangledown^\alpha g(x) = -c_\alpha(_{-\infty}\triangledown^\alpha_x + {_x}\triangledown^\alpha_\infty)g(x),$
  where
\begin{align*}
c_\alpha &=\frac{1}{2cos(\frac{\pi\alpha}{2})},\,\alpha\neq 1,\\
_{-\infty}\triangledown^\alpha_x g(x) &= \frac{1}{\Gamma(n-\alpha)}\frac{d^n}{dx^n}\int_{-\infty}^{x}\frac{g(u)}{(x-u)^{\alpha+1-n}}du,\\
{_x}\triangledown^\alpha_\infty g(x) &=(-1)^n \frac{1}{\Gamma(n-\alpha)}\frac{d^n}{dx^n}\int_{x}^{\infty}\frac{g(u)}{(u-x)^{\alpha+1-n}}du,
\end{align*}
\end{definition}
\noindent where $n=\lfloor \textrm{Re}(\alpha)\rfloor+1.$
 
For a function $g$ satisfying the integrability condition
\begin{equation}\label{integrability condition}
\int_{\mathbb{R}}\frac{|g(y)|}{(1+|y|)^{1+\alpha}}dy<\infty,
\end{equation}
we define as in \cite{bogdan2003} that
\begin{equation}
\triangle_{\epsilon}^{\frac{\alpha}{2}}g(x)=\mathcal{A}(1,-\alpha)\int_{|y-x|>\epsilon}\frac{g(y)-g(x)}{|y-x|^{1+\alpha}}dy,
\end{equation}
and
\begin{equation}\label{fl}
\triangle^{\frac{\alpha}{2}}g(x)=\mathcal{A}(1,-\alpha)P.V.\int_{\mathbb{R}}\frac{g(y)-g(x)}{|y-x|^{1+\alpha}}dy:=\displaystyle\lim_{\epsilon\to 0^+}\triangle_{\epsilon}^{\frac{\alpha}{2}}g(x),
\end{equation}
whenever the limit exists. Here ``P.V." stands for the``principal value". The above limit exists and is finite if $g$ is of class $C^2$ in a neighborhood of $x$ and satisfies condition (\ref {integrability condition}); in this case
\[
\triangle^{\frac{\alpha}{2}}g(x)=\mathcal{A}(1,-\alpha)\int_{\mathbb{R}}\frac{g(y)-g(x)-\triangledown g(x). (y-x)1_{\{|y-x|<\epsilon\}}}{|y-x|^{1+\alpha}}dy\]
for any $\epsilon>0$, where $\mathcal{A}(1,-\alpha)=\frac{\alpha 2^{\alpha-1}\Gamma(\frac{\alpha+1}{2})}{\pi^{\frac{1}{2}}\Gamma(1-\frac{\alpha}{2})}$. Here $\triangle^{\frac{\alpha}{2}}$, is the fractional power of the Laplace operator $-(-\triangle)^{\frac{\alpha}{2}}$. The Fractional Laplacian is the infinitesimal generator of $\alpha$ stable process (see e.g. \cite{Landkof1972}). The Fractional Laplacian is usually defined by its Fourier transform (cf. \cite{stein1970}): $\mathcal{F}((-\triangle)^{\frac{\alpha}{2}}g)(\xi)=|\xi|^{\alpha}\mathcal{F}(g)(\xi)$, its proof can be found in \cite{Landkof1972}. Hence, $-(-\triangle)^{\frac{\alpha}{2}}g(\xi) = - \mathcal{F}^{-1}\Bigl(|\xi|^{\alpha}\mathcal{F}(g)(\xi)\Bigr).$ The definition we used in (\ref{fl}) coincides with the usual Fractional Laplacian defined by its Fourier transform (see \cite{adams03}).

By the Fourier transform method, one could show that the following relation holds (cf. \cite{Feller1952}):
\[
-(-\triangle)^{\frac{\alpha}{2}}g(x) = \triangledown^{\alpha}g(x).\]
 
 
\subsection{$p$-variation of local time of symmetric stable process}
We present the exact modulus of local time of stable processes from \cite{Barlow88} in the following theorem. Recall its characteristic function is given by \begin{equation*}
\chi(\theta)=|\theta|^\alpha +ih|\theta|^\alpha sgn(\theta),
\end{equation*}
where $|h|\leq$ tan$(\alpha\pi/2).$ We define its local time as $L_t^x$.
 
\begin{theorem}\label{barlowTheorem}
For a symmetric stable process of index $\alpha>1$, its local time satisfies
 
\begin{equation}
\lim_{\delta\downarrow 0}\sup_{\substack{|a-b|<\delta\\ a,b\in I\\ 0\leq s\leq t}}\frac{|L_s^a-L_s^b|}{|b-a|^{\frac{\alpha-1}{2}}\biggl({\rm{log}}\bigl(\frac{1}{|b-a|}\bigr)\biggr)^{1/2}}=\frac{2{c_{\alpha}}^{1/2}}{(1+h^2)^{1/2}}\biggl(\sup_{x\in I}L_t^x\biggr)^{1/2},
\end{equation}
for all intervals $I \subseteq \mathbb{R}$ and all $t>0$ a.s., where
\begin{equation}
c_{\alpha}=\frac{2}{\pi}\int_{0}^{\infty}(1-cosy)y^{-\alpha}dy=\frac{1}{\pi}\frac{\Gamma(2-\alpha)}{\alpha-1} sin\frac{(2-\alpha)\pi}{2}.
\end{equation}

\end{theorem}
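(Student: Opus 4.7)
The plan is to prove the upper bound ($\limsup \leq \cdot$) and the matching lower bound ($\limsup \geq \cdot$) separately, since together they force equality. The foundation for both directions is a sharp moment asymptotic for the local-time increments. Using Kac's moment formula together with the Fourier representation of the local time, and exploiting the explicit form of the symmetric $\alpha$-stable transition density and its small-distance asymptotics, I would derive
\[
\mathbb{E}\bigl[(L_t^a - L_t^b)^{2n}\bigr] \leq (2n)!\,\Bigl(\frac{c_\alpha |b-a|^{\alpha-1}}{1+h^2}\Bigr)^n \mathbb{E}\Bigl[\bigl(\sup_{x\in I} L_t^x\bigr)^n\Bigr],
\]
with a matching asymptotic lower bound as $|b-a|\to 0$. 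The precise appearance of $c_\alpha$ and of the factor $(1+h^2)^{-1}$ comes from isolating the leading-order term of $p_s(a)-p_s(b)$ for the stable kernel (and its Hilbert-transform piece arising from $h$) and plugging the resulting Fourier integral into the alternating sums from Kac's formula.

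For the upper bound I would condition on $M := \sup_{x\in I} L_t^x$ and convert the moment estimate into a sub-Gaussian tail
\[
\mathbb{P}\bigl(|L_s^a - L_s^b| > \lambda \,\big|\, M\bigr) \leq 2\exp\!\Bigl(-\frac{(1+h^2)\lambda^2}{4 c_\alpha M |b-a|^{\alpha-1}}\Bigr),
\]
uniformly in $0\le s\le t$, and then run a Fernique/Garsia--Rodemich--Rumsey chaining argument along a dyadic net in $I$ with respect to the metric $d(a,b)=|b-a|^{(\alpha-1)/2}$. This metric has a logarithmic entropy integral on bounded intervals, which produces exactly the $(\log(1/|b-a|))^{1/2}$ normalisation in the statement, with the constant $2 c_\alpha^{1/2}(1+h^2)^{-1/2} M^{1/2}$ emerging naturally from the square root of the variance proxy.

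For the matching lower bound I would fix a sequence of well-separated dyadic points $a_k = a_0 + k\delta_n$ with $\delta_n = 2^{-n}$, and use the strong Markov property of the stable process together with the approximate independence of local-time increments at spatially separated points to execute a Borel--Cantelli argument: with probability one, infinitely many of the normalised increments along such nets must approach the target threshold. The sharp lower-tail Gaussian bound produced in Step~1 ensures that the numerical constant obtained matches the one in the upper bound, yielding equality.

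The main obstacle is Step~1, extracting the precise constant $c_\alpha/(1+h^2)$ in the moment bound; this requires a careful asymptotic analysis of the alternating sums from Kac's formula using the exact Fourier inversion of the skewed stable density, and in particular keeping track of the $h$-dependent cross terms. A secondary difficulty is decoupling the random normalising factor $\sup_{x\in I} L_t^x$ from the increment process itself, which I would handle by a stopping-time/localisation argument freezing the supremum on level sets $\{M \in [m,2m]\}$ before applying the Gaussian chaining and Borel--Cantelli estimates.
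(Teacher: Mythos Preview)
This theorem is not proved in the paper at all: it is quoted verbatim from Barlow~\cite{Barlow88} as an input, introduced with ``We present the exact modulus of local time of stable processes from \cite{Barlow88}\ldots''. The paper then \emph{uses} it (only the upper-bound direction, in fact) to establish Proposition~\ref{lemmaPvariation} on the $p$-variation of local time. So there is no proof in the paper to compare your proposal against.

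That said, a brief remark on your sketch versus what Barlow actually does. Barlow's argument does not proceed via direct Kac-formula moment expansions for $L_t^a-L_t^b$ followed by chaining on the local time itself. Instead, the route in \cite{Barlow88} (and the closely related Marcus--Rosen work \cite{Marcusr92,Marcu92}) passes through an isomorphism theorem that transfers the problem to the modulus of continuity of an associated mean-zero Gaussian process whose covariance is built from the $0$-potential density $u(x,y)$; the sharp constant $2c_\alpha^{1/2}(1+h^2)^{-1/2}$ then drops out of the known exact modulus for stationary Gaussian processes with increment variance $\sigma^2(h)\sim c_\alpha|h|^{\alpha-1}/(1+h^2)$. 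Your Step~1 moment bound with the factorised $\mathbb{E}[(\sup L)^n]$ on the right is not something Kac's formula delivers cleanly --- the alternating sums couple the increments to the occupation measure in a more entangled way --- and decoupling the random normaliser $\sup_x L_t^x$ by conditioning is exactly the difficulty the isomorphism approach is designed to sidestep. The upper bound might survive a version of your argument with extra work, but getting the \emph{matching} lower bound with the exact constant by a bare Borel--Cantelli on the local-time increments, without the Gaussian comparison, would be substantially harder than you indicate.
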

 
Barlow \cite {Barlow88} gave a neccessary and sufficient condition for the joint continuity of the local time, and the exact modulus for a fairly wide class of L\'evy processes. 

Let $[a,b]$ be any finite interval. By its dyadic decompositions we mean partitions $\{a_0^n<a_1^n<\dots<a_{2^n}^n\}$ of $[a,b]$, where
\[
a_k^n= a + \frac{k}{2^n}(b-a),\,\, k\, =\,0, 1,\dots, 2^n,\]
and $n\in \mathbb{N}.$
 
 
\begin{proposition}\label {lemmaPvariation}
The family of local times $L_t^x$ of $\alpha$-stable processes with $\alpha\in (1,2)$ is of bounded $p$-variation in $x$ for any $t\geq 0$, and any $p>\frac{2}{\alpha-1}$ almost surely.
\end{proposition}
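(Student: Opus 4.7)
The plan is to derive bounded $p$-variation of $L_t^{\cdot}$ directly from the almost sure modulus of continuity supplied by Barlow's Theorem \ref{barlowTheorem}. The argument is carried out pathwise.

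Fix $\omega$ in the full-measure event on which $R := \sup_{0\le s\le t}|X_s|+1 < \infty$ (so $L_t^x \equiv 0$ outside $I := [-R,R]$), on which $\sup_{x\in I} L_t^x < \infty$, and on which the limit in Theorem \ref{barlowTheorem} applied to $I$ holds with finite right-hand side. From that $\limsup$ I extract random constants $K = K(\omega) > 0$ and $\delta_0 = \delta_0(\omega) \in (0,1)$ so that
\begin{equation*}
|L_t^a - L_t^b| \le K\, |a-b|^{(\alpha-1)/2}\bigl(\log(1/|a-b|)\bigr)^{1/2}
\qquad \text{for all } a,b \in I \text{ with } |a-b| < \delta_0.
\end{equation*}

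Now fix $p > \frac{2}{\alpha-1}$ and put $\beta := p(\alpha-1)/2 > 1$. Given an arbitrary partition $E = \{-R = x_0 < x_1 < \cdots < x_m = R\}$ with spacings $h_i := x_i - x_{i-1}$, I split
\begin{equation*}
\sum_{i=1}^m |L_t^{x_i} - L_t^{x_{i-1}}|^p
 = \sum_{i:\, h_i \ge \delta_0} |L_t^{x_i} - L_t^{x_{i-1}}|^p + \sum_{i:\, h_i < \delta_0} |L_t^{x_i} - L_t^{x_{i-1}}|^p.
\end{equation*}
The coarse sum has at most $2R/\delta_0$ terms, each bounded by $(2\sup_{x} L_t^x)^p$, so it is partition-independent. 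For the fine sum, Barlow's bound yields
\begin{equation*}
|L_t^{x_i} - L_t^{x_{i-1}}|^p \le K^p\, h_i \cdot h_i^{\beta-1} \bigl(\log(1/h_i)\bigr)^{p/2},
\end{equation*}
and because $\beta > 1$ the auxiliary function $h \mapsto h^{\beta-1}(\log(1/h))^{p/2}$ is bounded on $(0,1)$ by a deterministic constant $M_0$, since it vanishes at both endpoints. Summing over $i$ with $\sum_i h_i \le 2R$ again gives a partition-free bound.

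Combining the two estimates, the $p$-variation of $L_t^{\cdot}$ on $I$, and hence on $\mathbb{R}$ by compact support, is almost surely finite. The only non-routine step is converting the $\limsup$-type conclusion of Theorem \ref{barlowTheorem} into the uniform pointwise modulus used above; once that is in hand, the argument reduces to an elementary H\"older-type $p$-variation estimate in which the extra logarithmic factor is absorbed exactly because the inequality $p > \frac{2}{\alpha-1}$ is strict.
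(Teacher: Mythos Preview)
Your argument is correct and takes a genuinely more elementary route than the paper's. The paper's proof invokes Proposition~4.1.1 of Lyons--Qian to dominate the $p$-variation over an arbitrary partition by a weighted sum $\sum_n n^{\gamma}\sum_k |L_t^{a_k^n}-L_t^{a_{k-1}^n}|^p$ over dyadic partitions, and then feeds Barlow's modulus into the dyadic increments for $n\ge n_0$. You instead split an arbitrary partition according to whether each spacing $h_i$ lies above or below $\delta_0$: the coarse piece is controlled by the counting bound $\#\{i:h_i\ge\delta_0\}\le 2R/\delta_0$, and on the fine piece you write $|L_t^{x_i}-L_t^{x_{i-1}}|^p\le K^p\,h_i\cdot h_i^{\beta-1}(\log(1/h_i))^{p/2}$ and absorb the second factor into a constant since $\beta=p(\alpha-1)/2>1$.

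What each approach buys: your argument is shorter, completely self-contained, and makes transparent exactly where the strict inequality $p>\frac{2}{\alpha-1}$ is used (to give $\beta-1>0$ so the logarithm is absorbed). The paper's dyadic approach is the same device used repeatedly in the rough-path sections that follow (Propositions~\ref{ThetaVariationProposition}, \ref{SecondLevelVariationProposition}, etc.), so invoking it here establishes the pattern; but for Proposition~\ref{lemmaPvariation} in isolation your direct H\"older-type estimate is cleaner. One cosmetic remark: since local times are nonnegative, $|L_t^{x_i}-L_t^{x_{i-1}}|\le \sup_x L_t^x$ already, so the factor of $2$ in your coarse bound is unnecessary (though harmless).
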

\begin{proof}
First from Theorem \ref{barlowTheorem}, we know that for almost all $\omega\in\Omega$, there exists a $\delta^*(\omega)>0$ such that when $0<|b-a|<\delta^*(\omega)$, we have the following\\
\begin{equation}\label{eq: 2.1}
{|{L_s}^a-{L_s}^b|}^p\
\leq \quad \Biggl|b-a\Biggr|^{\frac{(\alpha-1)p}{2}}\Biggl({\rm{log}}\biggl(\frac{1}{|b-a|}\biggr)\Biggr)^{\frac{p}{2}}\frac{2^{p+1} {c_{\alpha}}^{\frac{p}{2}}}{\bigl(1+h^2\bigr)^{\frac{p}{2}}}\Biggl(\sup_{x\in I} L_t^x\Biggr)^{\frac{p}{2}}.
\end{equation}
We can construct a dyadic decomposition which fully covers the interval $[a,b]$. Furthermore, by Proposition 4.1.1 from \cite{Lyons2002} (set $i=1, \gamma= p-1$), for any partition $\{a_l\}$ of $[a,b]$, we have
\begin{equation} \label{eq: 2.2}
\sup_{D}\sum_{l}|L_t^{a_{l+1}}-L_t^{a_{l}}|^p \leq C\bigl(p,\gamma\bigr)\sum_{n=1}^{\infty}n^{\gamma}\sum_{k=1}^{2^n}\Biggl|L_t^{a_k^n}-L_t^{a_{k-1}^n}\Biggr|^p,
\end{equation}
where $C\bigl(p,\gamma\bigr)$ is a constant depending on $p$ and $\gamma.$ Notice the right hand side of (\ref {eq: 2.2}) does not depend on partition $D$.
 
 We can choose an $n_0$ big enough such that $\frac{|b-a|}{2^n}\leq\delta^*(\omega)$ for any $n\geq n_0$. By substituting (\ref {eq: 2.1}) into (\ref {eq: 2.2}), it follows that
\begin{eqnarray*}\nn
&&
\quad\,\,\sum_{n=1}^{\infty}n^{\gamma}\sum_{k=1}^{2^n}\Biggl|L_t^{a_k^n}-L_t^{a_{k-1}^n}\Biggr|^p\nn\\
&&
\leq\sum_{n=1}^{n_0-1}n^{\gamma}\sum_{k=1}^{2^n}\Biggl|L_t^{a_k^n}-L_t^{a_{k-1}^n}\Biggr|^p\nn\\
&&
\quad+\sum_{n=n_0}^{\infty}n^{\gamma}\sum_{k=1}^{2^n}\Biggl|a_k^n-a_{k-1}^n\Biggr|^{\frac{(\alpha-1)p}{2}}\Biggl({\rm{log}}\biggl(\frac{1}{|a_k^n-a_{k-1}^n|}\biggr)\Biggr)^{\frac{p}{2}}\frac{2^{p+1} {c_{\alpha}}^{\frac{p}{2}}}{\bigl(1+h^2\bigr)^{\frac{p}{2}}}\Biggl(\sup_{x\in I} L_t^x\Biggr)^{\frac{p}{2}}.\nn\\
\end{eqnarray*}
In \cite{Barlow85}, Barlow showed that
\begin{equation}\label{eq: 2.3}
\Biggl(\sup_{x\in I} L_t^x\Biggr)^{\frac{p}{2}}<\infty, \quad a.s..
\end{equation}

Now, we consider the term 
\begin{equation*}
\sum_{n=n_0}^{\infty}n^{\gamma}\sum_{k=1}^{2^n}\Biggl|a_k^n-a_{k-1}^n\Biggr|^{\frac{(\alpha-1)p}{2}}\Biggl({\rm{log}}\biggl(\frac{1}{|a_k^n-a_{k-1}^n|}\biggr)\Biggr)^{\frac{p}{2}}
\end{equation*}
where $a_k^n=\frac{k}{2^n}(b-a)+a$, $k=0, 1,\cdots, 2^n$, then it is not difficult to see that
\begin{eqnarray*}\nn
&&
\quad\sum_{n=n_0}^{\infty}n^{\gamma}\sum_{k=1}^{2^n}\Biggl|a_k^n-a_{k-1}^n\Biggr|^{\frac{(\alpha-1)p}{2}}\Biggl({\rm{log}}\biggl(\frac{1}{|a_k^n-a_{k-1}^n|}\biggr)\Biggr)^{\frac{p}{2}}\nn\\
&&
\label{eq: 2.4}
=\sum_{n=n_0}^{\infty}n^{\gamma}\sum_{k=1}^{2^n}\Biggl|a_k^n-a_{k-1}^n\Biggr|^{\frac{(\alpha-1)p}{4}+\frac{1}{2}}\Biggl|a_k^n-a_{k-1}^n\Biggr|^{\frac{(\alpha-1)p}{4}-\frac{1}{2}}\Biggl({\rm{log}}\biggl(\frac{1}{|a_k^n-a_{k-1}^n|}\biggr)\Biggr)^{\frac{p}{2}}\nn\\
&&
\leq \beta_1 \sum_{n=n_0}^{\infty}n^{\gamma}\biggl(\frac{b-a}{2^n}\biggr)^{\frac{(\alpha-1)p}{4}-\frac{1}{2}}\nn\\
&&
<\infty,
\end{eqnarray*}
where $\beta_1$ is a constant depends only on $p$. It turns out for any interval $[a,b] \subset$$\mathbb{R}$
\begin{equation*}
\sup_{D}\sum_{m}|L_t^{a_{m+1}}-L_t^{a_{m}}|^p< \infty.
\end{equation*}
As $L_t^a(\omega)$ has a compact support in $a$ for each $\omega$, say $[-N,N]$ contains its support.\,\,We still denote its partition by $D: = D_{-N,N} =\{-N=x_0<x_1<\cdots<x_t =N\}$ and attain that
\begin{equation}
\sup_{D}\sum_{l}|L_t^{a_{l+1}}-L_t^{a_{l}}|^p< \infty.
\end{equation}
\end{proof}
\subsection{The Young integral with respect to local time}
We start with functions which are smooth, then proceed to functions which are less smooth. The following works for sufficiently smooth functions, unless we explicitly say otherwise. It\^o's formula for L\'evy process in general (cf.\,\,\cite{applebaum04}) is given by
\begin{eqnarray}\nn
&&
g(X_t)= g(X_0)+\int_{0}^{t} \triangledown g(X_s)d{X_s}+\frac{1}{2}\sigma^2\int_{0}^{t}\triangle g(X_s)ds\nn\\
&&\qquad\qquad+\int_{0}^{t}\int_{\mathbb{R}}\biggl[g(X_{s-}+y)-g(X_{s-})\biggr]1_{\{|y|\geq1\}}{N}(dy,ds)\nn\\
&&\qquad\qquad+\int_{0}^{t}\int_{\mathbb{R}}\biggl[g(X_{s-}+y)-g(X_{s-})\biggr]1_{\{|y|<1\}}\tilde{N}(dy,ds)\nn\\
&&
\qquad\qquad+ \int_{0}^{t}\int_{\mathbb{R}} \biggl[g(X_{s-}+y)-g(X_{s-})-y\triangledown g(X_{s-})\biggr]1_{\{|y|<1\}}\nu(dy)ds,\nn\\
\end{eqnarray}
where $\tilde{N}(dy,ds)$ is the compensated Poisson measure defined as the difference of Poisson point measure $N(dy,ds)$ and its intensity measure $\nu(dy)ds$. And, the L\'evy measure of stable process $\nu(dx)$ is
\[
\nu(dx) = C|x|^{-\alpha -1}dx,\]
where $C$ is a constant.\,\,The last term of the above It\^o's formula can be further simplified by using the definition of fractional Laplacian as
\begin{eqnarray*}\nn
&&
\quad\int_{\mathbb{R}\backslash\left\{0\right\}} \biggl[g(X_{s-}+y)-g(X_{s-})-y\triangledown g(X_{s-})\biggr]1_{\{|y|<1\}}\nu(dy)\nn\\
&&
=C\int_{\mathbb{R}\backslash\left\{0\right\}} \frac{g(X_{s-}+y)-g(X_{s-})-y\triangledown g(X_{s-})1_{|y|<1}}{|y|^{\alpha+1}}dy -C\int_{|y|\geq1} \frac{g(X_{s-}+y)-g(X_{s-})}{|y|^{\alpha+1}}dy \nn\\
&&
=C_{\alpha}\triangle^{\frac{\alpha}{2}}g(X_{s-})-\int_{|y|\geq1} [g(X_{s-}+y)-g(X_{s-})]\nu({dy})
\end{eqnarray*}
for every $g\in C_b^2(\mathbb{R})$, where $C_{\alpha} = \frac{C\pi^{\frac{1}{2}}\Gamma(1-\frac{2}{\alpha})}{\alpha 2^{\alpha-1}\Gamma(\frac{1+\alpha}{2})}$.
 
Then, one can derive the following It\^o's formula for stable processes from (2.18) based on the fact that $\sigma =0$ for stable processes which are pure jump processes and the definition of the compensated Poisson measure
\begin{eqnarray}\nn
&&
g(X_t)= g(X_0)+\int_{0}^{t} \triangledown g(X_s)d{X_s}\nn\\
&&\qquad\qquad+\int_{0}^{t}\int_{\mathbb{R}\backslash\{0\}}\biggl[g(X_{s-}+y)-g(X_{s-})\biggr] \tilde{N}(dy,ds)\nn\\
&&
\qquad\qquad +C_{\alpha} \int_{0}^{t} \triangle^{\frac{\alpha}{2}}g(X_{s-})ds.
\end{eqnarray}
By the occupation times formula, one can show that
\begin{equation}
\int_{0}^{t} \triangledown^{\alpha}g(X_{s-})ds = \int_{-\infty}^{\infty} \triangledown^{\alpha}g(x) L_t^xdx =\int_{-\infty}^{\infty}  L_t^xd_x\bigl(\triangledown^{\alpha-1}g(x)\bigr).
\end{equation}

Note the last integral $\int_{-\infty}^{\infty}  L_t^xd_x\bigl(\triangledown^{\alpha-1}g(x)\bigr)$ can be defined without the need to assume that $g\in C^2$. In fact, if $\triangledown^{\alpha-1}g(x)$ is of finite $q$-variation ($1\leq q <\frac{2}{3-\alpha}$), then the integral $ \int_{-\infty}^{\infty} \triangledown^{\alpha-1}g(x) d_xL_t^x$ is well defined as a Young integral. Similar to \cite{Feng2006}, we have the following remark.

\begin{remark}\label{updatedremark} If ${ \triangledown^{\alpha-1} g(x)}$ is a $C^1$ function, then $\int_{-\infty}^{\infty} {\triangledown^{\alpha-1} g(x)} d_x L_t^x$ exists as a Riemann integral and we have
\begin{equation}
\int_{-\infty}^{\infty} {\triangledown^{\alpha-1} g(x)} d_x L_t^x = - \int_{-\infty}^{\infty} L_t^x  d_x \,\bigl({\triangledown^{\alpha-1}g(x)}\bigr).
\end{equation}
\end{remark}
In fact, since $L_t^.$ has a  compact support for each $t$, one can always add some points in the partition to make $L_t^{x_1} = 0$ and $L_t^{x_m} = 0.$ Then, we can show that
\begin{eqnarray}\nn
&&
\quad\int_{-\infty}^{\infty} {\triangledown^{\alpha-1} g(x)} d_x L_t^x\nn\\
&&
=\lim_{m(D)\to 0} \sum_{j=1}^{m} {\triangledown^{\alpha-1} g(x_{j-1})} \bigl(L_t^{x_j} - L_t^{x_{j-1}}\bigr)\nn\\
&&
=\lim_{m(D)\to 0} \biggl[\sum_{j=1}^{m} {\triangledown^{\alpha-1} g(x_{j-1})} L_t^{x_j} - \sum_{j=0}^{m-1} {\triangledown^{\alpha-1} g(x_{j})} L_t^{x_j}\biggr]\nn\\
&&
= - \lim_{m(D)\to 0} \sum_{j=1}^{m}  \bigl( {\triangledown^{\alpha-1} g(x_{j})}-{\triangledown^{\alpha-1} g(x_{j-1})}\bigr) L_t^{x_j} \nn\\
&&
=- \int_{-\infty}^{\infty} L_t^x  d_x \,\bigl({\triangledown^{\alpha-1} g(x)}\bigr).
\end{eqnarray}
In the following, we will consider the integral for less smooth functions. For this, we define a mollifier
\begin{equation}
\rho(x)=\left\{
\begin{array}{lc}
  ce^{\frac{1}{(x-1)^2-1}},\quad if\,{x\in(0,2),} \\ \\0,  \,\,\,\,\qquad\quad\quad otherwise. \\
\end{array}
\right.
\end{equation}

Here $c$ is chosen so that $\int_{0}^{2}\rho(x)dx=1.$ Take $\rho_n(x)=n\rho(nx)$ as the mollifier which will be used to smootherise less smooth functions.

In this paper, we extend It\^o's formula for less smooth function $f:\mathbb{R}\to\mathbb{R}$ which is absolutely continuous. Such function $f$ has a $(\alpha-1)^{th}$ fractional derivative which is assumed to be left continuous and is of finite $q$ variation, where $1\leq q<4$. And, we denote the left limit of this $(\alpha-1)^{th}$ fractional derivative of $f$ as $\triangledown_-^{{\alpha-1}} f(x)$.

\begin{theorem}\label{LessSmoothTheorem}  Let $f: \mathbb{R}\to \mathbb{R}$ be an absolutely continuous,\,locally bounded function,\,have the $(\alpha-1)^{th}$ fractional derivative 
which is left continuous and of finite $q$-variation, where $1\leq q <\frac{2}{3-\alpha}$.\,Define $ f_n(x) = \int_{-\infty}^{\infty}\rho_n(x-y)f(y)dy\,\text{with}\,\, n\geq 1$.\,Then
\begin{equation}\label{eq: 2.15}
\int_{-\infty}^{\infty}{ \triangledown^{\alpha-1} f_n(x)}d_x L_t^x \to \int_{-\infty}^{\infty}{ \triangledown^{\alpha-1}_- f(x)}d_x L_t^x,\quad\hbox{as}\quad n\to\infty.
\end{equation}
\end{theorem}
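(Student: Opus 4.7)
The plan is to apply Young's term-by-term integration theorem (Theorem~\ref{TermbytermIntegration}) with the integrand sequence $F_n(x) := \triangledown^{\alpha-1} f_n(x)$ converging to $F(x) := \triangledown_-^{\alpha-1} f(x)$, against the constant integrator sequence $G_n(x) := L_t^x$. Both sides of (\ref{eq: 2.15}) are well-defined Young integrals: by Proposition~\ref{lemmaPvariation}, $L_t^x$ has finite $p$-variation in $x$ for every $p > \frac{2}{\alpha-1}$, and $F$ has finite $q$-variation for some $1 \le q < \frac{2}{3-\alpha}$ by hypothesis, so $p$ can be taken close enough to $\frac{2}{\alpha-1}$ that $\frac{1}{p} + \frac{1}{q} > \frac{\alpha-1}{2} + \frac{3-\alpha}{2} = 1$. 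By Barlow's joint continuity (Theorem~\ref{barlowTheorem}) $L_t^x$ is continuous in $x$, so the ``no common discontinuities'' requirement is automatic, and the compact support of $L_t^\cdot$ reduces the problem to a fixed bounded interval. Moreover, since each $f_n$ is $C^{\infty}$, the integral $\int \triangledown^{\alpha-1} f_n(x) \, d_x L_t^x$ already coincides with the Riemann/Young integral via Remark~\ref{updatedremark}.

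The two remaining inputs are pointwise convergence $F_n \to F$ together with a uniform $q$-variation bound. Since $\triangledown^{\alpha-1}$ is a convolution (Fourier multiplier) operator, it commutes with the mollifier, giving $F_n = \rho_n \ast F$. Because $\rho_n$ is supported in $(0, 2/n)$, the value $F_n(x)$ is a weighted average of $F$ on the one-sided neighbourhood $(x - 2/n, x)$, and the assumed left-continuity of $F$ yields $F_n(x) \to F(x)$ at every $x \in \mathbb{R}$, with locally uniform convergence on any open sub-interval where $F$ has no jumps. For the $q$-variation bound, Jensen's inequality applied to the probability density $\rho_n$ gives, for any partition $\{x_i\}$,
\[
\sum_i \bigl|F_n(x_i) - F_n(x_{i-1})\bigr|^q
\le \int \rho_n(z) \sum_i \bigl|F(x_i - z) - F(x_{i-1} - z)\bigr|^q \, dz
\le V_q(F)^q,
\]
so $V_q(F_n) \le V_q(F)$ uniformly in $n$.

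With these pieces Theorem~\ref{TermbytermIntegration} applies: one takes $A$ to contain the continuity points of $F$ (at which $F_n \to F$ locally uniformly), $B$ to contain the (at most countable) jump set of $F$, and notes that the constant sequence $G_n = L_t^\cdot$ converges uniformly to $L_t^\cdot$ everywhere while $L_t^\cdot$ has no discontinuities; the conclusion (\ref{eq: 2.15}) then follows on a compact interval containing the support of $L_t^\cdot$. The main technical obstacle I anticipate is checking the precise ``densely convergent, uniformly at each point of $A$'' hypothesis of Theorem~\ref{TermbytermIntegration} in a neighbourhood of each jump point of $F$, together with rigorously justifying the commutation $\triangledown^{\alpha-1}(\rho_n \ast f) = \rho_n \ast \triangledown^{\alpha-1} f$ at the minimal regularity of $f$ assumed here (verifying convergence of the relevant Liouville-type kernel integrals and applicability of Fubini's theorem).
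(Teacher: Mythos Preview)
Your proposal is correct and follows essentially the same route as the paper: prove the commutation $\triangledown^{\alpha-1}(\rho_n\ast f)=\rho_n\ast\triangledown^{\alpha-1}f$, use Jensen's inequality against the probability density $\rho_n$ to get a uniform $q$-variation bound on $F_n$, use the one-sided support of $\rho_n$ together with left-continuity (and dominated convergence) to get $F_n(x)\to\triangledown_-^{\alpha-1}f(x)$ pointwise, and then invoke Theorem~\ref{TermbytermIntegration}. The only cosmetic difference is that the paper justifies the commutation by a direct Fubini computation on the Liouville kernel (using the absolute continuity of $f$) rather than by the Fourier-multiplier heuristic you mention, and it phrases the uniform $q$-variation bound via an explicit control function $\mathbf{w}$; your anticipated ``technical obstacles'' are exactly the points the paper spells out.
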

\begin{proof} Note that $f_n(x)$ can be rewritten as
\[
f_n(x) = \int_{0}^{2}\rho(z)f(x-\frac{z}{n})dz, \quad n\geq 1,\]
and note it is smooth.
In particular
\begin{equation}\label{fd}
\triangledown^{\alpha-1} f_n(x) =  \int_{0}^{2}\rho(z)\triangledown^{{\alpha-1}}f(x-\frac{z}{n})dz, \quad n\geq 1.
\end{equation}

To see this, one can use Fubini's theorem and the absolutely continuity property of the function $f$ to get
\begin{eqnarray}
&&
_{-\infty} \triangledown^{\alpha-1}_x  f_n(x) = \frac{1}{\Gamma(2-\alpha)}\frac{d}{dx}\biggl(\int_{-\infty}^{x}\frac{f_n(t)}{(x-t)^{\alpha-1}}dt\biggr)\nn\\
&&
\,\,\qquad\quad\qquad\quad=\frac{1}{\Gamma(2-\alpha)}\frac{d}{dx}\biggl(\int_{0}^{\infty}\frac{f_n(x-t)}{t^{\alpha-1}}dt\biggr)\nn\\
&&
\,\,\qquad\quad\qquad\quad=\frac{1}{\Gamma(2-\alpha)}\biggl(\int_{0}^{\infty}\frac{f'_n(x-t)}{t^{\alpha-1}}dt\biggr)\nn\\
&&
\,\,\qquad\quad\qquad\quad=\frac{1}{\Gamma(2-\alpha)}\biggl(\int_{-\infty}^{x}(x-t)^{1-\alpha}{\int_{0}^{2}\rho(z)f'(t-\frac{z}{n})}dzdt\biggr)\nn\\
&&
\,\,\qquad\quad\qquad\quad=\frac{1}{\Gamma(2-\alpha)}\biggl(\int_{0}^{2}\rho(z)\int_{-\infty}^{x}\frac{f'(t-\frac{z}{n})}{(x-t)^{\alpha-1}}dtdz\biggr)\nn\\
&&
\,\,\qquad\quad\qquad\quad=\frac{1}{\Gamma(2-\alpha)}\biggl(\int_{0}^{2}\rho(z)\frac{d}{dx}\bigl(\int_{-\infty}^{x}\frac{f(t-\frac{z}{n})}{(x-t)^{\alpha-1}}dt\bigr)dz\biggr)\nn\\
&&
\,\,\qquad\quad\qquad\quad=\int_{0}^{2}\rho(z){_{-\infty}} \triangledown^{\alpha-1}_xf(x-\frac{z}{n})dz.
\end{eqnarray}
Similarly, we can derive $
_x\triangledown^{\alpha-1}_{\infty} f_n(x) = \int_{0}^{2}\rho(z){_{x}} \triangledown^{\alpha-1}_{\infty}f(x-\frac{z}{n})dz.$
Hence, we have proved (\ref{fd}).

Similar to \cite{Young38}, for any partition $D:=\{-N=x_0<x_1<\cdots<x_r=N\}$, there is an increasing function $\textbf{w}$ such that
\[\bigl|{\triangledown^{\alpha-1} f(x_{l+1})}-{\triangledown^{\alpha-1} f(x_{l})}\bigr|\leq \bigl(\textbf{w}(x_{l+1})-\textbf{w}(x_{l})\bigr)^{\frac{1}{q}},\quad x_l,\,x_{l+1}\in D,\]
where $\textbf{w}(x)$ is the total $q$-variation of $\triangledown^{\alpha-1} f(x)$ in the interval [-N-2, $x$].
 
Then, by Jensen's inequality, we obtain
\begin{eqnarray}\nn
&&
\quad\sup_D \sum_{l=1}^{r}\bigl|{\triangledown^{\alpha-1} f_n(x_l)}-{\triangledown^{\alpha-1} f_n(x_{l-1})}\bigr|^q\nn\\
&&
=\sup_D \sum_{l=1}^{r}\biggl|\int_{0}^{2}\rho(z)\bigl[{ \triangledown^{\alpha-1} f(x_l-\frac{z}{n})}-{ \triangledown^{\alpha-1} f(x_{l-1}-\frac{z}{n})}\bigr]dz\biggr|^q\nn\\
&&
\leq M_1 \sup_D \sum_{l=1}^{r}\biggl(\int_{0}^{2}\bigl|{\triangledown^{\alpha-1} f(x_l-\frac{z}{n})}-{ \triangledown^{\alpha-1} f(x_{l-1}-\frac{z}{n})}\bigr|^q dz\biggr)\nn\\
&&
\leq M_1\int_{0}^{2}\sup_D \sum_{l=1}^{r}\bigl|{ \triangledown^{\alpha-1} f(x_l-\frac{z}{n})}-{ \triangledown^{\alpha-1} f(x_{l-1}-\frac{z}{n})}\bigr|^q dz\nn\\
&&
\leq M_1 \int_{0}^{2} \biggl(\textbf{w}(N-\frac{z}{n})-\textbf{w}(-N-\frac{z}{n})\biggr)dz,\nn\\
\end{eqnarray}
where $M_1$ is a constant. In addition, as we have
\begin{equation}
\textbf{w}(N-\frac{z}{n})-\textbf{w}(-N-\frac{z}{n})\leq \textbf{w}(N),
\end{equation}
it follows that
\begin{equation}
\sup_D \sum_{l=1}^{r}\bigl|{\triangledown^{\alpha-1} f_n(x_l)}-{ \triangledown^{\alpha-1} f_n(x_{l-1})}\bigr|^q\leq 2M_1 \textbf{w}(N)<\infty.
\end{equation}
This implies that ${\triangledown^{\alpha-1} f_n(x)}$ is of bounded $q$-variation in $x$ uniformly in $n$. \noindent Moreover,\,by Lebesgue's dominated convergence theorem and (2.27), we have that
\begin{equation}
 \triangledown^{\alpha-1} f_n(x) \to \triangledown_-^{\alpha-1} f(x)\quad\text{as}\quad n\to\infty.
\end{equation}Now the theorem follows from Theorem \ref{TermbytermIntegration} immediately.\\
\end{proof}
We present the It\^o's formula for stable processes defined in terms of Young integral in the following theorem.
\begin{theorem}\label{MainYoungTheorem} Let  $X = (X_t)_{t\geq0}$ be a symmetric $\alpha$-stable process, $1<\alpha<2$, and $f: \mathbb{R} \to \mathbb{R}$ be an absolutely continuous, locally bounded function that has $(\alpha-1)^{th}$ fractional order derivative ${{\triangledown_-^{{\alpha-1}}} f(x)}$. Assume that ${\triangledown_-^{\alpha-1} f(x)}$ is locally bounded, and of bounded $q$-variation, where $1\leq q < \frac{2}{3-\alpha}$. Then we have the following It\^o's formula

\begin{eqnarray}\nn
&&
f(X_t) = f(X_0) + \int_{0}^{t}\triangledown_- f(X_s)dX_s\nn\\
&&
\qquad\quad\quad+ \int_{0}^{t}\int_{\mathbb{R}}\biggl(f(X_{s-} +y)-f(X_{s-})\biggr) \tilde{N}(dy,ds)-{C_{\alpha}}\int_{-\infty}^{\infty}\triangledown_-^{\alpha-1}f(x)d_x L_t^x,\nn\\
\end{eqnarray}
where $C_{\alpha} = \frac{\pi^{\frac{1}{2}}\Gamma(1-\frac{2}{\alpha})}{\alpha 2^{\alpha-1}\Gamma(\frac{1+\alpha}{2})}$.
\end{theorem}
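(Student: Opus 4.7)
The plan is to approximate $f$ by the smooth mollified functions $f_n = \rho_n * f$, apply the It\^o formula (2.19) to each $f_n$, and then pass to the limit $n\to\infty$. Since each $f_n \in C^\infty$, combining (2.19) with the occupation times identity (2.20) and Remark \ref{updatedremark} gives
\[
f_n(X_t) = f_n(X_0) + \int_0^t \triangledown f_n(X_s)\,dX_s + \int_0^t\!\!\int_\mathbb{R}\bigl[f_n(X_{s-}+y) - f_n(X_{s-})\bigr]\tilde{N}(dy,ds) - C_\alpha \int_{-\infty}^\infty \triangledown^{\alpha-1}f_n(x)\,d_x L_t^x.
\]

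Now I let $n\to\infty$. The left-hand side converges to $f(X_t)$ and $f_n(X_0)\to f(X_0)$ by the continuity of $f$, which follows from absolute continuity. The Young integral term on the right converges to $-C_\alpha \int_{-\infty}^\infty \triangledown_-^{\alpha-1} f(x)\,d_x L_t^x$ directly by Theorem \ref{LessSmoothTheorem}, whose hypotheses are met thanks to Proposition \ref{lemmaPvariation} (which supplies the bounded $p$-variation of $L_t^\cdot$ with $1/p+1/q>1$ when $q<2/(3-\alpha)$). A crucial design choice is that $\rho$ is supported in $(0,2)$, so $f_n(x) = \int_0^2 \rho(z) f(x - z/n)\,dz$ averages $f$ over $(x-2/n,x)$; consequently the pointwise limit of $\triangledown^{\alpha-1}f_n$ is the left-continuous representative $\triangledown_-^{\alpha-1}f$, matching the notation in the theorem statement.

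The hardest part is verifying convergence of the two stochastic integrals. For the compensated jump integral, the It\^o isometry with $\nu(dy)=C|y|^{-\alpha-1}dy$ gives, after localization at $\tau_R=\inf\{s:|X_s|>R\}$,
\[
E\Bigl|\int_0^{t\wedge\tau_R}\!\!\int_\mathbb{R}\bigl[(f_n-f)(X_{s-}+y)-(f_n-f)(X_{s-})\bigr]\tilde{N}(dy,ds)\Bigr|^2 = E\int_0^{t\wedge\tau_R}\!\!\int_\mathbb{R}|\Delta(f_n-f)|^2\nu(dy)\,ds.
\]
On $\{|X_s|\leq R\}$ the integrand is dominated by $(\|\triangledown_- f\|_{L^\infty([-R-1,R+1])}|y|)^2$ for $|y|<1$ and by $(2\|f\|_{L^\infty([-R-|y|,R+|y|])})^2$ for $|y|\geq 1$, each integrable against $\nu(dy)$ uniformly in $n$; combined with pointwise convergence $f_n\to f$, dominated convergence yields the limit. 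The first stochastic integral $\int_0^t\triangledown f_n(X_s)\,dX_s$ is handled analogously via the L\'evy-It\^o decomposition of the symmetric $\alpha$-stable $X$ and the almost-everywhere convergence $\triangledown f_n\to\triangledown_- f$, again using the left support of $\rho$ to select the correct representative. Collecting all four limits and letting $R\to\infty$ delivers the claimed It\^o formula.
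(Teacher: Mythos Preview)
Your proof is correct and follows essentially the same approach as the paper: mollify $f$ to obtain smooth $f_n$, apply the classical It\^o formula (2.19) for stable processes, convert the fractional Laplacian term via the occupation times formula and Remark \ref{updatedremark}, and pass to the limit using Theorem \ref{LessSmoothTheorem}. In fact you go further than the paper, which simply asserts that ``the convergence of all terms except the fractional Laplacian term are clear,'' by actually supplying the localization and dominated-convergence arguments for the two stochastic integrals.
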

 
\begin{proof} Define a smooth function $f_n$ as in Theorem \ref{LessSmoothTheorem}. We apply It\^o's formula (2.19) for stable process to the smooth function $f_n$.\,Then when we take the limit as $n\to\infty$, the convergence of all terms except the fractional Laplacian term are clear.\,For the fractional Laplacian term, we use the occupation times formula, Remark \ref{updatedremark} and equation (\ref{eq: 2.15}), to obtain that
\begin{equation}
\int_{0}^{t}\triangle^{\frac{\alpha}{2}} f_n(X_{s-}) ds
= - \int_{-\infty}^{\infty} \triangledown^{\alpha-1}f_n(x) d_x L_t^x \to  - \int_{-\infty}^{\infty} \triangledown_-^{\alpha-1}f(x) d_x L_t^x.
\end{equation}
\end{proof}

\section{Local time as rough path}
For a function $g$ is of bounded $q$-variation for $1\leq q< \frac{2}{3-\alpha}$ and local time is of bounded $p$-variation $(p>\frac{2}{\alpha-1})$, one can find a real number $p>\frac{2}{\alpha-1}$ such that $ \frac{1}{p}+\frac{1}{q}>1$, hence $\int_{-\infty}^{\infty}g(x)d_x L_t^x$ can be defined as a Young integral. But when $q\geq\frac{2}{3-\alpha}$, the condition  $ \frac{1}{p}+\frac{1}{q}>1$ is not satisfied, hence one can no longer use Young integral to define  $\int_{-\infty}^{\infty}g(x)d_x L_t^x$. Moreover, $\int_{}L^x_tdL^x_t$ cannot be defined as a Young integral as pointed out in  \cite{Feng2008,Feng2010}. However, the rough path integration theory can provide a way to overcome this obstacle. In the rest of the paper, we deal with the case where $\frac{3}{2}<\alpha<2$ and $\frac{2}{3-\alpha}\leq q<4$. For this, one needs to treat $Z_x :=(L_t^x, g(x))$ as a process of variable $x$ in $\mathbb{R}^2$.  In this case, $Z_x$ is of bounded $\hat q$-variation in $x$, where $\hat q =max\{p,q\}$ with $p\in(\frac{2}{\alpha -1},4)$. 
 
In the following,  we will first construct a continuous and bounded path $Z(m)$ from $Z_.$ on a certain time interval. The smooth rough path ${\bf {Z}}(m)$ is thus built by taking its iterated integrals with respect to $Z(m)$. The final stage is to show the existence of the geometric rough path $\bf{Z}=(1, \bf{Z}^1,\bf{Z}^2, \bf{Z}^3)$ associated with $Z_.$. 
 
In order to show the existence of the geometric rough path $\bf{Z}$, we need to prove that the space of rough path we have defined is complete under the $\theta$-variation distance which was pointed out in Lemma 3.3.3 in \cite{Lyons2002}\begin{equation}
d_{\theta}(X, Y) = \displaystyle\max_{1\leq i\leq[\theta]}d_{i,\theta}(X^i,Y^i)=\displaystyle\max_{1\leq i\leq[\theta]}\displaystyle\sup_{D}\biggl(\displaystyle\sum_l{|{X_{x_{l-1},x_l}^i - Y_{x_{l-1},x_l}^i}|}^{\frac{\theta}{i}}\biggr)^{\frac{i}{\theta}}.
\end{equation}
Therefore, the strategy consists of verifying that the smooth rough path ${\bf {Z}}(m)$ is a Cauchy sequence in the $\theta$-variation metric $d_{\theta}$ on $C_{0,\theta}(\Delta, T^{([\theta])}(\mathbb{R}^2))$.
%
 

We recall the following lemma from \cite{Marcusr2006}.
\begin{lemma}\label{key lemma for fifth term}
Let $X=\{X(t),t\in \mathbb{R}_+\}$ be a real-valued symmetric stable process of index $1<\alpha\leq 2$ and  $\{L_t^x, (t,x)\in\mathbb{R}_+\times\mathbb{R}\}$ be the local time of $X$.\,Then, for all $x,y\in\mathbb{R}$, and integers $m\geq1$, 

\begin{equation}||L_t^x-L_t^y||_{2m}\leq C(\alpha,m) t^{\frac{\alpha-1}{2\alpha}}|x-y|^{\frac{\alpha-1}{2}},\end{equation}
where $C(\alpha,m)$ is constant depending on $\alpha$ and m.
\end{lemma}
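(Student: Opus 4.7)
The plan is to compute $\|L_t^x-L_t^y\|_{2m}^{2m}$ via the Kac moment formula and then collapse the resulting signed sum using the symmetry and scaling of the symmetric $\alpha$-stable transition density $p_u$. Recall that $p_u(z)=u^{-1/\alpha}p_1(zu^{-1/\alpha})$ with $p_1$ smooth, bounded, positive, symmetric, and maximal at $0$.

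Writing $L_t^a$ as the $\eta\downarrow 0$ limit of $\int_0^t \varphi_\eta(X_s-a)\,ds$, expanding the $2m$-th power of $L_t^x-L_t^y$, symmetrising the $(2m)!$ orderings of times and applying the strong Markov property yields
\begin{equation*}
E\bigl(L_t^x-L_t^y\bigr)^{2m} = (2m)! \int_{0<s_1<\cdots<s_{2m}<t} \sum_{\epsilon\in\{x,y\}^{2m}} \prod_{i=1}^{2m}\sigma(\epsilon_i)\, p_{s_i-s_{i-1}}(\epsilon_i-\epsilon_{i-1})\,ds,
\end{equation*}
where $s_0=0$, $\epsilon_0=X_0$, $\sigma(x)=+1$ and $\sigma(y)=-1$. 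The crucial step is to collapse the signed inner sum. Summing over $\epsilon_{2m},\epsilon_{2m-1},\ldots,\epsilon_1$ in reverse order and using the symmetry $p_u(-z)=p_u(z)$, each summation at an even-indexed position $2k$ produces the nonnegative factor
\[
q_-(u_{2k}):=p_{u_{2k}}(0)-p_{u_{2k}}(x-y)\geq 0,\qquad u_{2k}=s_{2k}-s_{2k-1},
\]
together with a surviving sign $\sigma(\epsilon_{2k-1})$, which is absorbed by the next summation to produce the nonnegative factor $q_+(u_{2k-1}):=p_{u_{2k-1}}(0)+p_{u_{2k-1}}(x-y)$; the very first summation contributes the initial factor $p_{s_1}(x-\epsilon_0)+p_{s_1}(y-\epsilon_0)\leq 2p_{s_1}(0)$, which plays the role of an additional $q_+$-type factor. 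Altogether exactly $m$ factors of $q_-$ and $m$ factors of $q_+$-type appear.

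The quantitative estimate then follows from two one-dimensional bounds. Fourier inversion together with the characteristic function $Ee^{i\xi X_u}=e^{-u|\xi|^\alpha}$ gives the sharp identity
\[
\int_0^\infty q_-(u)\,du = \frac{1}{\pi}\int_0^\infty \frac{1-\cos(\xi(x-y))}{|\xi|^\alpha}\,d\xi = C_\alpha|x-y|^{\alpha-1},
\]
while scaling of $p_u$ gives $\int_0^t q_+(u)\,du \leq 2\int_0^t p_u(0)\,du \leq C\,t^{(\alpha-1)/\alpha}$. Since all factors are nonnegative and the simplex $\{u_i>0,\sum u_i<t\}$ is contained in $[0,t]^{2m}$, the simplex integral is dominated by the product of these one-dimensional integrals, yielding
\[
E\bigl(L_t^x-L_t^y\bigr)^{2m} \leq (2m)!\,C^{2m}\,|x-y|^{m(\alpha-1)}\,t^{m(\alpha-1)/\alpha},
\]
and the $2m$-th root gives the claim with $C(\alpha,m):=((2m)!)^{1/(2m)}C$.

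The main obstacle is the combinatorial bookkeeping in the collapse of the signed sum over $\{x,y\}^{2m}$: one must verify by induction on the position index that the alternating pattern of surviving signs leaves exactly $m$ factors of $q_-$, each carrying the sharp power $|x-y|^{\alpha-1}$ from the Fourier identity above. Without this clean algebraic collapse, naive pointwise Hölder bounds on $p_u(0)-p_u(x-y)$ only distribute the $|x-y|$ smallness over some of the gap factors and fail to reach the correct $\|\cdot\|_{2m}$-exponent $(\alpha-1)/2$. Once the alternating $q_-/q_+$ structure is established, the remainder is a routine product of scaling estimates.
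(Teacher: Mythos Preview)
The paper does not give its own proof of this lemma; it is quoted verbatim as a result from Marcus--Rosen \cite{Marcusr2006}. Your argument is correct and is precisely the standard Marcus--Rosen proof that the paper is citing: the Kac moment expansion followed by the reverse-order telescoping collapse of the signed sum into alternating factors $q_-(u_{2k})=p_{u_{2k}}(0)-p_{u_{2k}}(x-y)$ and $q_+(u_{2k-1})=p_{u_{2k-1}}(0)+p_{u_{2k-1}}(x-y)$, with the sharp bounds $\int_0^\infty q_-(u)\,du=C_\alpha|x-y|^{\alpha-1}$ from the Fourier identity and $\int_0^t q_+(u)\,du\le C\,t^{(\alpha-1)/\alpha}$ from the scaling $p_u(0)=u^{-1/\alpha}p_1(0)$.
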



We obtain that for any $p>\frac{2}{\alpha-1}$, the following relation as a special case of Lemma 3.1
\begin{equation}\label{eq: 4.1}
E|L_t^b - L_t^a|^p\leq c|b - a|^{\frac{(\alpha-1)p}{2}},
\end{equation}
 
\noindent with a constant $c>0$.\,\,This means that $L_t^x$ satisfies the H$\ddot {\rm o}$lder condition with exponent $\frac{\alpha-1}{2}$.  A control function $\textbf{w}$ is a non-negative continuous function on the simplex $\Delta:=\{(a,b) : x'\leq a<b\leq x''\}$ with values in $[0,\infty)$ such that $\textbf{w}(a,a)=0.$ It is super-additive, namely
\begin{equation}
\textbf{w}(a,b) + \textbf{w}(b,c)\leq\textbf{w}(a,c),
\end{equation}
for any $(a,b), (b,c)\in\Delta.$
In the case when $g(x)$ is of bounded $q$-variation, one has a control $\textbf{w}$ such that
\begin{equation}
|g(b)-g(a)|^q\leq\textbf{w}(a,b),
\end{equation}
for any $(a,b)\in\Delta.$ It is clear that $\textbf{w}_1(a,b):=\textbf{w}(a,b)+(b-a)$ is also a control of $g$.  For $h=\frac{1}{\hat{q}}$ and  $\theta>\hat{q}$, one can verify that
\begin{equation}
|{g(b)-g(a)|^{\theta}\leq\textbf{w}_1(a,b)^{h\theta}}
\end{equation}
for any $(a,b)\in\Delta$ and $h\theta>1.$ Hence, there exists a constant $c$ such that
\begin{equation}\label{eq: 4.2}
E|Z_b-Z_a|^{\theta}\leq c\textbf{w}_1(a,b)^{h\theta}, \quad\forall (a,b)\in\Delta.
\end{equation}

Following the idea in \cite{Feng2008}, one could define a continuous and bounded variation path $Z(m)$ on $[x' , x'']$ for any $m\in N$ by
\begin{equation}\label{eq: 4.3}
Z(m)_x: = Z_{x_{l-1}^m} + \frac{\textbf{w}_1(x) -\textbf{w}_1(x_{l-1}^m)}{\textbf{w}_1(x_{l}^m)-\textbf{w}_1(x_{l-1}^m)}\Delta_l^mZ,
\end{equation}
where $x_{l-1}^m\leq x<x_l^m$ with $l=1,\dots,2^m$, 
 and $\Delta_l^mZ= Z_{x_l^m} - Z_{x_{l-1}^m}$. Take a partition $D_m:=\{x'=x_0^m<x_1^m<\dots<x_{2^m}^m=x''\}$ of $[x',x'']$ such that 
\begin{equation}\label{usedlater}
\textbf{w}_1(x_l^m)-\textbf{w}_1(x_{l-1}^m) =\frac{1}{2^m}\textbf{w}_1(x',x''),\end{equation}
where $\textbf{w}_1(x):=\textbf{w}_1(x',x).$ 
In addition, by the superadditivity of the control function $\textbf{w}_1$, it is clear that
\[
\textbf{w}_1(x_{l-1}^m,x_l^m)\leq \textbf{w}_1(x_l^m)-\textbf{w}_1(x_{l-1}^m) = \frac{1}{2^m}\textbf{w}_1(x',x'').\]
The smooth rough path ${\bf {Z}}(m)$ associated with $Z(m)$ is constructed by taking its iterated path integrals. That is
\begin{equation}\label{eq: 4.4}
{\bf{Z}}(m)_{a,b}^j = \displaystyle\int_{a<x_1<\dots<x_j<b}dZ(m)_{x_1}\otimes\dots\otimes dZ(m)_{x_j}
\end{equation}
for any $(a,b)\in\Delta$, where $j=0, 1, 2, 3$. 

We will need the slightly modified version of Proposition 4.1.1 from \cite{Lyons2002}.
\begin{proposition}\label{ThetaVariationProposition}
Let $Z\in C_0(\Delta, T^{(N)}(V))$ be a multiplicative functional with a fixed running time interval, say $[0,1].$ Then for any $1\leq i\leq N,$ $\theta$ satisfying $\theta/i>1,$ and any $\gamma>\theta/i-1,$ there exists a constant $ C_i(\theta,\gamma)$ depending only on $\theta, \gamma,$ and $i$, such that
\begin{equation}
\displaystyle\sup_{D}\displaystyle\sum_{l}|Z_{x_{l-1}, x_l}^i|^{\theta/i}\leq C_i(\theta,\gamma)\sum_{n=1}^{\infty}n^\gamma\sum_{k=1}^{2^n}\sum_{j=1}^{i}|Z_{x_{k-1}^n, x_k^n}^j|^{\theta/j},
\end{equation}
where $\displaystyle\sup_D$ runs over all finite partitions D of $[0,1],$ and ${x^k_n}$ satisfies equation (\ref{usedlater}). 
\end{proposition}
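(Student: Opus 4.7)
The plan is to adapt the proof of Proposition 4.1.1 in \cite{Lyons2002} to the present setting. The only substantive difference is that the nested partitions $\{x^n_k\}$ here are dyadic with respect to the control function $\textbf{w}_1$ via (\ref{usedlater}), rather than dyadic with respect to arc length on $[0,1]$. Since the original argument uses only the super-additivity of the control together with Chen's multiplicative identity for $Z$, this change of parametrisation is essentially notational and the structure of the proof carries over.

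First, I would fix an arbitrary finite partition $D = \{0 = u_0 < u_1 < \cdots < u_r = 1\}$ of the running interval and estimate $\sum_{l=1}^{r} |Z^i_{u_{l-1}, u_l}|^{\theta/i}$ by a bound that does not depend on $D$. For each $l$, the interval $[u_{l-1}, u_l]$ is resolved against the nested dyadic partitions by splitting it at a suitable dyadic endpoint and applying Chen's identity
\[
Z^i_{a,c} \;=\; \sum_{j=0}^{i} Z^j_{a,b} \otimes Z^{i-j}_{b,c}.
\]
Iterating this split along progressively finer dyadic levels expresses $Z^i_{u_{l-1}, u_l}$ as a sum of products of dyadic increments $Z^j_{x^n_{k-1}, x^n_k}$ indexed by a binary tree of levels $n$ and orders $1 \leq j \leq i$, and the sub-multiplicative inequality $|Z^i_{a,c}| \leq \sum_{j=0}^{i} |Z^j_{a,b}| \cdot |Z^{i-j}_{b,c}|$ transfers this algebraic identity to an analytic bound.

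Next, I would raise the resulting estimate to the power $\theta/i > 1$ and apply the elementary H\"older-type inequality
\[
\Bigl(\sum_{n \geq 1} a_n\Bigr)^{\theta/i} \;\leq\; C(\theta,\gamma,i) \sum_{n \geq 1} n^{\gamma} a_n^{\theta/i},
\]
which holds precisely because $\gamma > \theta/i - 1$ makes $\sum_{n} n^{-\gamma/(\theta/i - 1)}$ converge. This is the step that produces the weight $n^\gamma$ on the right-hand side of the proposition. The conversion of factors $|Z^j|^{\theta/i}$ with $j < i$ into the required $|Z^j|^{\theta/j}$ uses the elementary observation that $\theta/j \geq \theta/i$ combined with the boundedness of $Z$ on $[0,1]$, with all resulting constants absorbed into $C_i(\theta,\gamma)$.

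The main obstacle will be the combinatorial bookkeeping: one must verify that each dyadic interval $[x^n_{k-1}, x^n_k]$ appears in the decomposition of at most a number of arbitrary-partition intervals $[u_{l-1}, u_l]$ growing at most polynomially in $n$, so that after summing over $l$ and applying the weight $n^\gamma$ the total right-hand side is finite and independent of $D$. Taking the supremum over $D$ on the left then yields the claim. An induction on $i$ from $1$ up to $N$ is convenient, the base case $i = 1$ being the classical Young-type bound for a scalar path of finite $\theta$-variation.
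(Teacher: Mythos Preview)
The paper does not give its own proof of this proposition: it is stated as ``the slightly modified version of Proposition~4.1.1 from \cite{Lyons2002}'' and then used without further argument. Your plan---carry the Lyons--Qian proof over verbatim, observing that the only change is that the nested dyadic points $x^n_k$ are dyadic with respect to $\textbf{w}_1$ via (\ref{usedlater}) rather than with respect to Lebesgue measure---is therefore exactly what the paper has in mind, and is the correct route.

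One small point in your sketch deserves tightening. The step where you pass from mixed powers $|Z^j|^{\theta/i}$ (arising from products in Chen's identity) to the homogeneous powers $|Z^j|^{\theta/j}$ on the right-hand side is not really a matter of ``boundedness of $Z$''; in the Lyons--Qian argument this is done via Young's inequality applied to the product $|Z^j_{a,b}|\,|Z^{i-j}_{b,c}|$ with conjugate exponents $i/j$ and $i/(i-j)$, giving
\[
\bigl(|Z^j_{a,b}|\,|Z^{i-j}_{b,c}|\bigr)^{\theta/i}\;\leq\;\tfrac{j}{i}\,|Z^j_{a,b}|^{\theta/j}\;+\;\tfrac{i-j}{i}\,|Z^{i-j}_{b,c}|^{\theta/(i-j)}.
\]
With that correction the outline is sound and matches the intended reference.
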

The aim of the remaining part of this section is to prove that $\{{\bf{Z}}(m)\}_{m\in N}$ converges to a geometric rough path $\bf{Z}$ in the $\theta$-variation topology. 

\subsection{First level path}
We first consider the convergence of the first level path ${\bf{Z}}(m)_{a,b}^1$.   
\begin{proposition}\label{FirstLevelProposition}
Let $(Z_x)$ be a continuous path and $h\theta\geq1$, ${\bf{Z}}(m)$ be defined as above. Then for all $n\in N$ \begin{equation}
m\to \sum_{k=1}^{2^n}|{\bf{Z}}(m)_{x_{k-1}^n,x_k^n}^1|^{\theta}
\end{equation}
is increasing. Hence,
\begin{equation}
\displaystyle\sup_m\sum_{k=1}^{2^n}|{\bf{Z}}(m)_{x_{k-1}^n,x_k^n}^1|^{\theta} =\displaystyle\lim_{m\to\infty}\sum_{k=1}^{2^n}|{\bf{Z}}(m)_{x_{k-1}^n,x_k^n}^1|^{\theta}.
\end{equation}
\end{proposition}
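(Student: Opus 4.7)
The plan is to separate the cases $m\geq n$ and $m<n$. The range $m\geq n$ is easy: the normalisation (\ref{usedlater}) makes the partitions $D_m$ nested (each $x_k^n$ equals $x_{k\cdot 2^{m-n}}^m\in D_m$, because they are defined to be dyadic in the strictly increasing control $\textbf{w}_1$). Since $Z(m)$ agrees with $Z$ at every partition point of $D_m$ by construction (\ref{eq: 4.3}), one gets ${\bf Z}(m)^1_{x_{k-1}^n,x_k^n}=Z_{x_k^n}-Z_{x_{k-1}^n}$, independent of $m$. So on this range the sum is constant, hence trivially non-decreasing.

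For $m<n$ I would exploit the explicit affine form of $Z(m)$ on each $[x_{l-1}^m,x_l^m]$. That interval contains exactly $2^{n-m}$ consecutive intervals $[x_{k-1}^n,x_k^n]$, each of $\textbf{w}_1$-length equal to $2^{-(n-m)}$ times the $\textbf{w}_1$-length of $[x_{l-1}^m,x_l^m]$. Because $Z(m)$ is affine in $\textbf{w}_1$ there, each produces the same increment
\[Z(m)_{x_k^n}-Z(m)_{x_{k-1}^n}=2^{-(n-m)}\bigl(Z_{x_l^m}-Z_{x_{l-1}^m}\bigr),\]
and summing $|\cdot|^\theta$ over all $k$ yields the closed form
\[S(m,n):=\sum_{k=1}^{2^n}\bigl|{\bf Z}(m)^1_{x_{k-1}^n,x_k^n}\bigr|^\theta=2^{(n-m)(1-\theta)}\sum_{l=1}^{2^m}\bigl|Z_{x_l^m}-Z_{x_{l-1}^m}\bigr|^\theta.\]

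The comparison $S(m,n)\leq S(m+1,n)$ for $m<n$ then follows from pairing each interval of $D_m$ with its two refinements in $D_{m+1}$ and applying the elementary convexity inequality $|a+b|^\theta\leq 2^{\theta-1}(|a|^\theta+|b|^\theta)$ with $a=Z_{x_{2l-1}^{m+1}}-Z_{x_{l-1}^m}$ and $b=Z_{x_l^m}-Z_{x_{2l-1}^{m+1}}$; this needs $\theta\geq 1$, which holds because the standing hypothesis $h\theta\geq 1$ combined with $h=1/\hat q\leq 1$ gives $\theta\geq\hat q\geq 1$. Summing over $l$ and multiplying by $2^{(n-m)(1-\theta)}$, then using the identity $(n-m)(1-\theta)+(\theta-1)=(n-m-1)(1-\theta)$, converts the convexity bound precisely into $S(m,n)\leq S(m+1,n)$. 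The second assertion of the proposition is then just monotone convergence of a non-negative, non-decreasing sequence. The main obstacle is bookkeeping — making sure the dyadic refinement is with respect to $\textbf{w}_1$ (not Euclidean length) so that the affine interpolant really has equal $\textbf{w}_1$-spaced increments on the finer grid; once this is set up, the monotonicity reduces to a single application of convexity.
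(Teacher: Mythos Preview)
Your proof is correct and follows essentially the same approach as the paper: both separate into the cases $m\geq n$ (where the sum is constant because $Z(m)$ agrees with $Z$ at the coarser grid points) and $m<n$ (where the affine interpolation in $\textbf{w}_1$ yields the closed form $S(m,n)=2^{(n-m)(1-\theta)}\sum_l|\Delta_l^m Z|^\theta$, and monotonicity reduces to the convexity inequality $|a+b|^\theta\leq 2^{\theta-1}(|a|^\theta+|b|^\theta)$ applied to $\Delta_l^m Z=\Delta_{2l-1}^{m+1}Z+\Delta_{2l}^{m+1}Z$). Your explicit justification of $\theta\geq 1$ from $h\theta\geq 1$ and $h\leq 1$ is a useful addition that the paper leaves implicit.
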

\begin{proof}
By  (\ref{eq: 4.3}) and (\ref{eq: 4.4}), we can derive for $n\leq m$ 
\[
{\bf{Z}}(m)_{x_{k-1}^n,x_k^n}^1 = \Delta_k^n Z, \quad k=1,\dots,2^n.
\]
On the other hand, if $n> m$, it is possible to find a unique integer $1\leq l \leq 2^m$ satisfying
\begin{equation}\label{eq: 4.5}
x_{l-1}^m\leq x_{k-1}^n< x_k^n<x_l^m.
\end{equation}
Based on  (\ref{eq: 4.3}) and (\ref{eq: 4.4}), one can get
\begin{equation}
Z(m)_{x_j^n}=Z_{x_{l-1}^m} + \frac{\textbf{w}_1(x_j^n)-\textbf{w}_1(x_{l-1}^m)}{\textbf{w}_1(x_{l}^m)-\textbf{w}_1(x_{l-1}^m)}\Delta_l^mZ,\quad j=k-1, k.
\end{equation}
\noindent It turns out that
\begin{equation}
{\bf{Z}}(m)_{x_{k-1}^n,x_{k}^n}^1 = Z(m)_{x_{k}^n}-Z(m)_{x_{k-1}^n}=2^{m-n}\Delta_l^m Z,\quad \forall n>m.
\end{equation}
\noindent For $n>m$, from the inequality (\ref{eq: 4.5}), we can compute the range for the integer $k$ for a given integer $l$. That is $2^{n-m}(l-1)+1\leq k< 2^{n-m}l$. In other words, there are $2^{n-m}$ points of  the form of $\{x_k^n\}_{2^{n-m}(l-1)+1\leq k<2^{n-m}l}$ embedded inside $[x_{l-1}^m, x_l^m).$ Therefore, for $n>m$,
\begin{equation}
\sum_{k=1}^{2^n}|{\bf{Z}}(m)_{x_{k-1}^n,x_k^n}^1|^{\theta}= \biggl(\frac{1}{2^n}\biggr)^{\theta-1}(2^m)^{\theta-1}\sum_{l=1}^{2^m}|\Delta_l^m Z|^{\theta}.
\end{equation}
It is interesting to notice that
\[
\Delta_l^m Z = \Delta _{2l}^{m+1}Z+\Delta _{2l-1}^{m+1}Z,\]
which gives
\begin{eqnarray}\nn
&&
\quad (2^m)^{\theta-1}\sum_{l=1}^{2^m}|\Delta_l^m Z|^{\theta}\nn\\
&&
=(2^{m+1})^{\theta-1}\sum_{l=1}^{2^{m}}\biggl(\frac{1}{2}\biggr)^{\theta-1}|\Delta _{2l}^{m+1}Z+\Delta _{2l-1}^{m+1}Z|^{\theta}\nn\\
&&
\leq (2^{m+1})^{\theta-1}\sum_{l=1}^{2^{m}}\biggl(|\Delta _{2l}^{m+1}Z|^{\theta}+|\Delta _{2l-1}^{m+1}Z|^{\theta}\biggr)\nn\\
&&
= (2^{m+1})^{\theta-1}\sum_{l=1}^{2^{m+1}}|\Delta _{l}^{m+1}Z|^{\theta}.
\end{eqnarray}
This proves the claim.
\end{proof}
As a consequence of 
Proposition \ref{FirstLevelProposition}, one can show that ${\bf {Z}}(m)_{x',x''}^1$ on any finite interval have finite $\theta$-variations uniformly in $m$ using a similar method in the proof of  Proposition 4.3.1 in \cite{Lyons2002}.
\begin{proposition}
For a continuous path $Z_x$ satisfying (\ref{eq: 4.2}) and $h\theta>1$.\,Then $ {\bf{Z}}(m)_{x',x''}^1$ have finite $\theta$ variation  uniformly in m.
\end{proposition}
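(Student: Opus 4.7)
The plan is to combine the modified Proposition~\ref{ThetaVariationProposition} with the monotonicity established in Proposition~\ref{FirstLevelProposition} and the moment estimate (\ref{eq: 4.2}). The condition $h\theta>1$ is precisely the summability threshold that will make the dyadic expansion converge, producing a bound independent of $m$. I would work in expectation and then transfer the conclusion to almost sure finiteness of the $\theta$-variation.

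First, I would apply Proposition~\ref{ThetaVariationProposition} with $i=1$ and a fixed $\gamma>\theta-1$ to the smooth rough path ${\bf{Z}}(m)$, yielding
\begin{equation*}
\sup_D \sum_l |{\bf{Z}}(m)^1_{x_{l-1},x_l}|^{\theta} \;\leq\; C_1(\theta,\gamma)\sum_{n=1}^{\infty} n^{\gamma}\sum_{k=1}^{2^n}|{\bf{Z}}(m)^1_{x_{k-1}^n,x_k^n}|^{\theta}.
\end{equation*}
By Proposition~\ref{FirstLevelProposition}, for each $n$ the inner sum is non-decreasing in $m$ with limit $\sum_{k=1}^{2^n}|\Delta_k^n Z|^{\theta}$ (indeed, whenever $m\geq n$ one already has ${\bf{Z}}(m)^1_{x_{k-1}^n,x_k^n}=\Delta_k^n Z$). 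Consequently the right-hand side admits the $m$-independent majorant
\begin{equation*}
C_1(\theta,\gamma)\sum_{n=1}^{\infty} n^{\gamma}\sum_{k=1}^{2^n}|\Delta_k^n Z|^{\theta}.
\end{equation*}

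Second, I would take expectation and invoke (\ref{eq: 4.2}) together with the partition property (\ref{usedlater}) and superadditivity of $\mathbf{w}_1$ to obtain
\begin{equation*}
E|\Delta_k^n Z|^{\theta}\;\leq\; c\,\mathbf{w}_1(x_{k-1}^n,x_k^n)^{h\theta}\;\leq\; c\left(\tfrac{1}{2^n}\mathbf{w}_1(x',x'')\right)^{h\theta},
\end{equation*}
so that $E\sum_{k=1}^{2^n}|\Delta_k^n Z|^{\theta}\leq c\,\mathbf{w}_1(x',x'')^{h\theta}\,2^{n(1-h\theta)}$. Summing over $n$ gives
\begin{equation*}
E\sup_D\sum_l|{\bf{Z}}(m)^1_{x_{l-1},x_l}|^{\theta}\;\leq\; C_1(\theta,\gamma)\,c\,\mathbf{w}_1(x',x'')^{h\theta}\sum_{n=1}^{\infty} n^{\gamma}2^{n(1-h\theta)},
\end{equation*}
whose right-hand side is finite exactly because $h\theta>1$ (so one picks $\gamma$ in the nonempty range $(\theta-1,\,\infty)$ and uses geometric decay to kill the polynomial factor $n^\gamma$). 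As the bound is independent of $m$, almost surely $\sup_m\sup_D\sum_l|{\bf{Z}}(m)^1_{x_{l-1},x_l}|^{\theta}<\infty$, which is the claim.

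The main delicacy lies in the interchange of $\sup_m$ and $E$: this is handled cleanly precisely because Proposition~\ref{FirstLevelProposition} provides a pointwise monotone-in-$m$ control whose pointwise limit is the explicit quantity $\sum_k |\Delta_k^n Z|^{\theta}$, to which (\ref{eq: 4.2}) applies directly. Apart from this observation, every other ingredient is a direct quotation, and the overall structure is parallel to the proof of Proposition~4.3.1 in \cite{Lyons2002}.
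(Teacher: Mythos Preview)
Your proposal is correct and follows essentially the same approach the paper defers to (Proposition 4.3.1 in \cite{Lyons2002}): you apply Proposition~\ref{ThetaVariationProposition} with $i=1$, replace the inner dyadic sum by its $m$-independent supremum $\sum_k|\Delta_k^n Z|^{\theta}$ via Proposition~\ref{FirstLevelProposition}, and then use the moment bound (\ref{eq: 4.2}) together with $h\theta>1$ to make the series in $n$ converge. Your remark that monotonicity in $m$ is what legitimizes passing to an $m$-free majorant before taking expectation is exactly the point, and the paper's own omitted proof would proceed the same way.
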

 
We present the convergence result of the first level path in the next theorem.\,\,Let ${\bf{Z}}_{a,b}^1= Z_b-Z_a.$ By (\ref{eq: 4.2}), one can show that $E|{\bf{Z}}_{a,b}^1|^{\theta}\leq c\textbf{w}_1(a,b)^{h\theta}.$ In particular, $E|{\bf{Z}}_{x_{k-1}^n,x_{k}^n}^1|^{\theta}\leq c\textbf{w}_1(x_{k-1}^n,x_{k}^n)^{h\theta}\leq c\bigl(\frac{1}{2^n}\bigr)^{h\theta}\textbf{w}_1(x',x'')^{h\theta}.$
 
\begin{theorem}\label{FirstLevelMainTheorem}
For $h\theta>1$, if a continuous path $Z_x$ satisfying the inequality (\ref{eq: 4.2}), then we have
\begin{equation}
\sum_{m=1}^{\infty}\displaystyle\sup_D\biggl(\displaystyle\sum_l|{\bf{Z}}(m)_{x_{l-1},x_l}^1-{\bf{Z}}_{x_{l-1},x_l}^1|^{\theta}\biggr)^{\frac{1}{\theta}}<\infty\quad a.s..
\end{equation}
In particular, ${\bf{Z}}(m)_{a,b}^1$ converges to ${\bf{Z}}_{a,b}^1$ in the $\theta$-variation distance almost surely for any $(a,b)\in\Delta.$
\end{theorem}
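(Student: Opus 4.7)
The plan is to bound the expectation of $\sup_D\sum_l|{\bf{Z}}(m)_{x_{l-1},x_l}^1-{\bf{Z}}_{x_{l-1},x_l}^1|^{\theta}$ by a geometric sequence in $m$, and then to conclude the almost sure summability by Jensen's inequality together with Fubini/monotone convergence. Writing $W_{a,b}:={\bf{Z}}(m)_{a,b}^1-{\bf{Z}}_{a,b}^1=(Z(m)_b-Z(m)_a)-(Z_b-Z_a)$, I first apply Proposition \ref{ThetaVariationProposition} with $N=1$ and $i=1$ to the path $W$, obtaining
\begin{equation*}
\sup_D\sum_l|W_{x_{l-1},x_l}|^{\theta}\leq C(\theta,\gamma)\sum_{n=1}^{\infty}n^{\gamma}\sum_{k=1}^{2^n}|W_{x_{k-1}^n,x_k^n}|^{\theta}.
\end{equation*}
A key simplification is that, because (\ref{usedlater}) forces the dyadic grids $\{x_k^n\}_{n\in\mathbb{N}}$ to be nested, for $n\leq m$ every $x_k^n$ is already a node of $D_m$, so $Z(m)_{x_k^n}=Z_{x_k^n}$ and the corresponding summands vanish. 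Only the tail $n>m$ contributes.

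For $n>m$, reusing the computation in the proof of Proposition \ref{FirstLevelProposition}, if $l$ is the unique index with $x_{l-1}^m\leq x_{k-1}^n<x_k^n\leq x_l^m$ then ${\bf{Z}}(m)_{x_{k-1}^n,x_k^n}^1=2^{m-n}\Delta_l^m Z$, and hence
\begin{equation*}
W_{x_{k-1}^n,x_k^n}=2^{m-n}\Delta_l^m Z-\Delta_k^n Z.
\end{equation*}
Applying the $c_r$-inequality together with (\ref{eq: 4.2}) and (\ref{usedlater}), one gets $E|\Delta_l^m Z|^{\theta}\leq c(2^{-m}\textbf{w}_1(x',x''))^{h\theta}$ and $E|\Delta_k^n Z|^{\theta}\leq c(2^{-n}\textbf{w}_1(x',x''))^{h\theta}$. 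Summing over the $2^n$ values of $k$ yields
\begin{equation*}
\sum_{k=1}^{2^n}E|W_{x_{k-1}^n,x_k^n}|^{\theta}\leq C\bigl(2^{n(1-\theta)}2^{m\theta(1-h)}+2^{n(1-h\theta)}\bigr),
\end{equation*}
and summing over $n>m$ against $n^{\gamma}$ both terms decay geometrically in $n$ (since $\theta>1$ and $h\theta>1$), so each is dominated by its leading term at $n=m+1$, giving
\begin{equation*}
E\Bigl[\sup_D\sum_l|W_{x_{l-1},x_l}|^{\theta}\Bigr]\leq C'\,m^{\gamma}\,2^{m(1-h\theta)}.
\end{equation*}

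To close the argument, since $\theta>1$ the map $x\mapsto x^{1/\theta}$ is concave, so Jensen's inequality gives
\begin{equation*}
E\Bigl[\Bigl(\sup_D\sum_l|W_{x_{l-1},x_l}|^{\theta}\Bigr)^{1/\theta}\Bigr]\leq C''\,m^{\gamma/\theta}\,2^{m(1-h\theta)/\theta},
\end{equation*}
and $(1-h\theta)/\theta<0$ renders the series $\sum_m m^{\gamma/\theta}2^{m(1-h\theta)/\theta}$ convergent. By monotone convergence (Tonelli), $\sum_m(\sup_D\sum_l|W|^{\theta})^{1/\theta}<\infty$ almost surely, which is the claimed estimate; the pointwise $\theta$-variation convergence ${\bf{Z}}(m)^1\to{\bf{Z}}^1$ for every $(a,b)\in\Delta$ then follows from the summability of the tail. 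The delicate point of the proof is the exponent bookkeeping: one must check that the contributions $2^{n(1-\theta)+m\theta(1-h)}$ and $2^{n(1-h\theta)}$ in the dyadic sum both collapse, after summation in $n>m$, to the same common rate $2^{m(1-h\theta)}$, which is precisely what allows the $\theta$-th root to be summed against $m$ under the sole hypothesis $h\theta>1$.
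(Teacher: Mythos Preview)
Your proof is correct and follows essentially the same route as the paper: apply Proposition~\ref{ThetaVariationProposition} to the additive functional $W={\bf Z}(m)^1-{\bf Z}^1$, observe that the nested grids kill all terms with $n\leq m$, use the $c_r$-inequality and (\ref{eq: 4.2}) on the dyadic increments for $n>m$, and close with Jensen. The only cosmetic difference is that the paper first dominates both pieces $2^{n(1-\theta)}2^{m\theta(1-h)}$ and $2^{n(1-h\theta)}$ by the single quantity $(2^{-n})^{h\theta-1}$ (using $n>m$ and $h<1$) before summing, and then splits the exponent in half to extract a factor $(2^{-m})^{(h\theta-1)/(2\theta)}$, whereas you keep the two pieces separate and sum each geometric tail directly to obtain the sharper rate $m^{\gamma/\theta}2^{m(1-h\theta)/\theta}$; both are summable in $m$ under $h\theta>1$.
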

\begin{proof}
For $n\leq m$, we have ${\bf{Z}}(m)_{x_{k-1}^n,x_{k}^n}^1={\bf{Z}}_{x_{k-1}^n,x_{k}^n}^1$, while if $n> m$ then
\begin{equation*}
|{\bf{Z}}(m)_{x_{k-1}^n,x_{k}^n}^1-{\bf{Z}}_{x_{k-1}^n,x_{k}^n}^1|^{\theta}\leq 2^{\theta-1}\biggl(|{\bf{Z}}(m)_{x_{k-1}^n,x_{k}^n}^1|^{\theta} + |{\bf{Z}}_{x_{k-1}^n,x_{k}^n}^1|^{\theta}\biggr).
\end{equation*}
By 
(\ref{eq: 4.2}) and Proposition \ref{ThetaVariationProposition}, we have
\begin{eqnarray*}\label{eq: p12}
&&
\quad E\sum_{m=1}^{\infty}\displaystyle\sup_D\biggl(\displaystyle\sum_l|{\bf{Z}}(m)_{x_{l-1},x_l}^1-{\bf{Z}}_{x_{l-1},x_l}^1|^{\theta}\biggr)^{\frac{1}{\theta}}\nn\\
&&
\leq C(\theta,\gamma) \sum_{m=1}^{\infty} \biggl(E\sum_{n=m+1}^{\infty} n^{\gamma}\sum_{k=1}^{2^n}|{\bf{Z}}(m)_{x_{k-1}^n,x_{k}^n}^1-{\bf{Z}}_{x_{k-1}^n,x_{k}^n}^1|^{\theta}\biggr)^{\frac{1}{\theta}}\nn\\
&&
\leq C \sum_{m=1}^{\infty} \biggl(E\sum_{n=m+1}^{\infty} n^{\gamma}\sum_{k=1}^{2^n}|{\bf{Z}}(m)_{x_{k-1}^n,x_{k}^n}^1|^{\theta}+|{\bf{Z}}_{x_{k-1}^n,x_{k}^n}^1|^{\theta}\biggr)^{\frac{1}{\theta}}
\end{eqnarray*}

\begin{eqnarray}\label{eq: p12}
&&
\leq C \sum_{m=1}^{\infty} \biggl(\sum_{n=m+1}^{\infty} n^{\gamma}\biggl(\frac{1}{2^n}\biggr)^{h\theta-1}\textbf{w}_1(x',x'')^{h\theta}\biggr)^{\frac{1}{\theta}}\nn\\
&&
\leq C\sum_{m=1}^{\infty}\biggl(\frac{1}{2^m}\biggr)^{\frac{{h\theta-1}}{2\theta}}  \sum_{n=m+1}^{\infty} n^{\frac{\gamma}{\theta}}\biggl(\frac{1}{2^n}\biggr)^{\frac{{h\theta-1}}{2\theta}}\nn\\
&&
\leq C \sum_{m=1}^{\infty}\biggl(\frac{1}{2^m}\biggr)^{\frac{{h\theta-1}}{2\theta}}\nn\\
&&
<\infty,
\end{eqnarray}
for $h\theta>1,$ where $C$ is a generic constant depending on $\theta, h, \textbf{w}_1(x',x'')$ and $c$ in (\ref{eq: 4.2}).\,\,This completes the proof.
\end{proof}
 
\subsection{Second level path}
Next, we consider the convergence of second level path ${\bf{Z}}_{a,b}^2$. From \cite{Lyons2002}, for $n\geq m$,
\begin{equation}\label{eq: 4.6}
{\bf{Z}}(m)_{x_{k-1}^n, x_k^n}^N = \frac{1}{N!}2^{N(m-n)}\bigl(\Delta_l^mZ\bigr)^{\otimes N}
\end{equation}
for all level paths with $N=1,2,\dots$. For the second level path, we take $N=2.$ In the case of $n< m$,
\begin{eqnarray}
&&
\quad {\bf{Z}}(m)_{x_{k-1}^n, x_k^n}^2\nn\\
&&
=\frac{1}{2}\Delta_k^n Z\otimes \Delta_k^n Z+\frac{1}{2}\displaystyle\sum_{l=2^{m-n}(k-1)+1}^{2^{m-n}k}\displaystyle\sum_{r=2^{m-n}(k-1)+1}^{l}\bigl(\Delta_r^m Z\otimes \Delta_l^m Z - \Delta_l^m Z\otimes \Delta_r^m Z\bigr),
\end{eqnarray}
therefore,
\begin{eqnarray}\label{eq: 4.7}
&&
\quad {\bf{Z}}(m+1)_{x_{k-1}^n, x_k^n}^2-{\bf{Z}}(m)_{x_{k-1}^n, x_k^n}^2\nn\\
&&
=\frac{1}{2}\displaystyle\sum_{l=2^{m-n}(k-1)+1}^{2^{m-n}k}\bigl(\Delta_{2l-1}^{m+1} Z\otimes \Delta_{2l}^{m+1} Z - \Delta_{2l}^{m+1} Z\otimes \Delta_{2l-1}^{m+1} Z\bigr),
\end{eqnarray}
where $k=1,\dots,2^n.$
 
We first give the result for the second level path ${\bf{Z}}(m)_{a,b}^2$ when $n\geq m$.
\begin{proposition}\label{SecondLevelVariationProposition}
For a continuous path $Z_x$ which satisfies (\ref{eq: 4.2}) with $h\theta>1$, then for $n\geq m$
\begin{equation}
\sum_{k=1}^{2^n}E\biggl|{\bf{Z}}(m+1)_{x_{k-1}^n, x_k^n}^2-{\bf{Z}}(m)_{x_{k-1}^n, x_k^n}^2\biggr|^{\frac{\theta}{2}}\leq C\biggl(\frac{1}{2^{n+m}}\biggr)^{\frac{h\theta-1}{2}},
\end{equation}
 
\end{proposition}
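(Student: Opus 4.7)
The plan is to split the argument into the two subcases $n=m$ and $n\geq m+1$, since the structure of ${\bf Z}(m+1)^2 - {\bf Z}(m)^2$ on the interval $[x_{k-1}^n, x_k^n]$ depends qualitatively on whether the refined partition at level $m+1$ inserts an extra breakpoint inside this interval.

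When $n=m$, the path $Z(m)$ is linear on $[x_{k-1}^m, x_k^m]$ with single increment $\Delta_k^m Z = \Delta_{2k-1}^{m+1}Z + \Delta_{2k}^{m+1}Z$, whereas $Z(m+1)$ is piecewise linear there with successive increments $\Delta_{2k-1}^{m+1}Z$ and $\Delta_{2k}^{m+1}Z$. Expanding the iterated integrals explicitly (equivalently, specializing formula (\ref{eq: 4.7}) to $n=m$, where the sum collapses to a single term) gives the antisymmetric identity
\[
{\bf Z}(m+1)^2_{x_{k-1}^m, x_k^m} - {\bf Z}(m)^2_{x_{k-1}^m, x_k^m} = \tfrac{1}{2}\bigl(\Delta_{2k-1}^{m+1}Z \otimes \Delta_{2k}^{m+1}Z - \Delta_{2k}^{m+1}Z \otimes \Delta_{2k-1}^{m+1}Z\bigr).
\]
Estimating its norm by $|\Delta_{2k-1}^{m+1}Z|\cdot|\Delta_{2k}^{m+1}Z|$, raising to the $\theta/2$ power, applying the Cauchy--Schwarz inequality in $L^2(\Omega)$, and using (\ref{eq: 4.2}) together with the dyadic identity (\ref{usedlater}) one obtains $E|\cdot|^{\theta/2}\leq C\,(2^{-(m+1)})^{h\theta}$. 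Summing over $k=1,\dots,2^m$ then produces $C\,2^{-m(h\theta-1)}$, which equals the target right-hand side of the proposition at $n=m$.

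When $n\geq m+1$, both $Z(m)$ and $Z(m+1)$ are already linear on $[x_{k-1}^n, x_k^n]$, so (\ref{eq: 4.6}) applies to each and gives
\[
{\bf Z}(m)^2_{x_{k-1}^n, x_k^n} = \tfrac{1}{2}\,2^{2(m-n)}(\Delta_l^m Z)^{\otimes 2},\qquad {\bf Z}(m+1)^2_{x_{k-1}^n, x_k^n} = \tfrac{1}{2}\,2^{2(m+1-n)}(\Delta_{l'}^{m+1}Z)^{\otimes 2},
\]
where $l,l'$ are the unique indices with $[x_{k-1}^n, x_k^n]\subset [x_{l-1}^m, x_l^m]\cap [x_{l'-1}^{m+1}, x_{l'}^{m+1}]$. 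The triangle inequality, the elementary estimate $(a+b)^{\theta/2}\leq C(a^{\theta/2}+b^{\theta/2})$, and (\ref{eq: 4.2}) together yield $E|\cdot|^{\theta/2}\leq C\,2^{\theta(m-n)}\,2^{-m h\theta}$, and summing over the $2^n$ intervals gives $C\,2^{n(1-\theta)+m(\theta-h\theta)}$.

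The final and main step is to verify that this last expression is dominated by $C\,(2^{-(n+m)})^{(h\theta-1)/2}$; a direct exponent comparison reduces the required inequality to $(n-m)\bigl((2-h)\theta-1\bigr)\geq 0$. Since $n\geq m$, it suffices to check $(2-h)\theta\geq 1$, and this holds comfortably under the standing hypotheses: with $h=1/\hat q$ and $\theta>\hat q\geq 2/(\alpha-1)>2$, we have $h<1/2$ and $\theta>2$, so $(2-h)\theta>3$. This single exponent inequality simultaneously produces the sharp rate in the $n=m$ case (where it degenerates to an equality) and in the $n\geq m+1$ case, fitting both into the single dyadic decay rate required by the proposition.
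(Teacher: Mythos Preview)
Your proof is correct and follows essentially the same route as the paper: both arguments invoke formula (\ref{eq: 4.6}) for the second-level iterated integral of the piecewise-linear approximants, apply the moment bound (\ref{eq: 4.2}), sum over the $2^n$ subintervals, and then compare exponents to reach the dyadic rate $(2^{-(n+m)})^{(h\theta-1)/2}$. Your explicit separation of the case $n=m$ (where $Z(m+1)$ is not linear on $[x_{k-1}^m,x_k^m]$ and formula (\ref{eq: 4.6}) does not literally apply to ${\bf Z}(m+1)^2$) is in fact slightly more careful than the paper's uniform treatment, and your final exponent check $(2-h)\theta\geq 1$ is an equivalent rearrangement of the paper's use of $(2^{m-n})^{\theta(1-h)}\leq 1$.
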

\noindent where $C$ is a generic constant that depends on  $\theta, h, \textbf{w}_1(x',x'')$, and $c$ in (\ref{eq: 4.2}).
\begin{proof} For $n\geq m$, it follows from (\ref{eq: 4.6})
\begin{eqnarray}\nn
&&
\quad\sum_{k=1}^{2^n}E\biggl|{\bf{Z}}(m+1)_{x_{k-1}^n, x_k^n}^2-{\bf{Z}}(m)_{x_{k-1}^n, x_k^n}^2\biggr|^{\frac{\theta}{2}}\nn\\
&&
=\sum_{l=1}^{2^{m+1}}\displaystyle\sum_{x_{l-1}^{m+1}\leq x_{k-1}^n<x_l^{m+1}}E\biggl|\frac{1}{2}2^{2(m+1-n)}\bigl(\Delta_l^{m+1}Z\bigr)^{\otimes 2}-\frac{1}{2}2^{2(m-n)}\bigl(\Delta_l^mZ\bigr)^{\otimes 2}\biggr|^{\frac{\theta}{2}}\nn\\
&&
=\sum_{l=1}^{2^{m+1}}2^{n-m-1}E\biggl|\frac{1}{2}2^{2(m+1-n)}\bigl(\Delta_l^{m+1}Z\bigr)^{\otimes 2}-\frac{1}{2}2^{2(m-n)}\bigl(\Delta_l^mZ\bigr)^{\otimes 2}\biggr|^{\frac{\theta}{2}}\nn\\
&&
\leq C\biggl(\frac{2^m}{2^n}\biggr)^{\theta} \sum_{l=1}^{2^{m+1}}2^{n-m-1}\biggl(\frac{1}{2^m}\biggr)^{h\theta}\textbf{w}_1(x',x'')^{h\theta}\nn\\
&&
\leq C\biggl(\frac{2^m}{2^n}\biggr)^{\theta-h\theta}  \biggl(\frac{1}{2^n}\biggr)^{h\theta-1}\nn\\
&&
\leq C\biggl(\frac{2^m}{2^n}\biggr)^{\theta-h\theta}  \biggl(\frac{1}{2^n}\biggr)^{\frac{h\theta-1}{2}}\biggl(\frac{1}{2^m}\biggr)^{\frac{h\theta-1}{2}}\nn\\
&&
\leq C\biggl(\frac{1}{2^{m+n}}\biggr)^{\frac{h\theta-1}{2}},
\end{eqnarray}
where $C$ is a generic constant depending on $\theta, h, \textbf{w}_1(x',x'')$, and $c$ in (\ref{eq: 4.2}).
\end{proof}
 
The proof of the above result in the case when $n< m$ is more involved as suggested by (\ref{eq:  4.7}).\,\,In order to establish the convergence of the second level paths, it is crucial to estimate $\displaystyle\sum_i E(L_t^{x_{i+1}}-L_t^{x_i})(L_t^{x_{j+1}}-L_t^{x_j})$, and to obtain the correct order in terms of the increments $x_{j+1}-x_j$ as suggested in \cite{Feng2008}. This point will be made clear through the proof of the convergence of the second level path. 
 
 
First, define
\begin{equation}\label{eq: 4.8}
\sigma^2(h) = E\bigl(L_t^{x+h}-L_t^{x}\bigr)^2 
\end{equation}
and a covariance matrix
\begin{equation*}
\rho_{i,j}(D)= E\bigl(L_t^{x_i}-L_t^{x_{i-1}}\bigr)\bigl(L_t^{x_j}-L_t^{x_{j-1}}\bigr),
\end{equation*}
where $D =\{x_i\}_i$ is a partition of a given interval.\,By (\ref {eq: 4.8}), using the same elementary algebraic manipulation, one can deduce that for $i<j$
\begin{eqnarray}\nn\label{eq: 4.9}
&&
\rho_{i,j} = -\frac{1}{2}\bigl[\sigma^2(x_{j-1}-x_{i-1})-\sigma^2(x_{j-1}-x_{i})\bigr]\nn\\
&&
\quad\quad\quad\quad + \frac{1}{2}\bigl[\sigma^2(x_{j}-x_{i-1})-\sigma^2(x_{j}-x_{i})\bigr].
\end{eqnarray}

It was proved in \cite{Marcusr92} that
\begin{equation}\label{macineq}
\displaystyle\sum_i\bigl|\rho_{i,j}(D)\bigr|\leq \frac{1}{2}\sigma^2(x_j-x_{j-1}).
\end{equation}
for the Gaussian case 
which is purely based on the concavity and monotonicity of $\sigma^2$. As the function $ |.|^{\alpha-1}$ for $\frac{3}{2}<\alpha<2$ is both concave and monotone,\,therefore, the inequality (\ref{macineq}) is also applicable for the local time of stable process. Hence, by (\ref{eq: 4.1}) and (\ref{macineq}), it follows that 
\begin{align}\label{isomorphism ineq}
\displaystyle\sum_i\bigl|\rho_{i,j}(D)\bigr|\leq \frac{1}{2}\sigma^2(x_j-x_{j-1})
\leq C_1  |x_j-x_{j-1}|^{\alpha-1}
\end{align}
where  $C_1$ is a constant related to the constant c in (\ref{eq: 4.1}). 
Now we are in the position to prove the following proposition
\begin {proposition}\label{SecondLevelConvergence}
Let $\frac{2}{3-\alpha}\leq q<3$,  $2\leq\theta<3$.\,Then for $n< m$
\begin{equation}
E\bigl|{\bf Z}(m+1)_{x_{k-1}^n, x_k^n}^2 -{\bf Z}(m)_{x_{k-1}^n, x_k^n}^2\bigr|^{\frac{\theta}{2}}\leq C\Biggl[\biggl(\frac{1}{2^n}\biggr)^{\frac{\theta}{4}}\biggl(\frac{1}{2^{m}}\biggr)^{\frac{2h+\alpha-2}{4}\theta}\Biggr],
\end{equation}
\end{proposition}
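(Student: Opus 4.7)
The plan is to reduce the $\theta/2$-moment to a second-moment calculation via Jensen's inequality, and then exploit the two-dimensional structure of $Z_x = (L_t^x, g(x))$ together with the covariance bound (\ref{isomorphism ineq}) to obtain the required decay. Concretely, set $A := \mathbf{Z}(m+1)^2_{x_{k-1}^n, x_k^n} - \mathbf{Z}(m)^2_{x_{k-1}^n, x_k^n}$. Since $\theta < 3 < 4$, the map $u \mapsto u^{\theta/4}$ is concave on $[0,\infty)$, so Jensen's inequality yields
\begin{equation*}
E|A|^{\theta/2} = E\bigl((|A|^2)^{\theta/4}\bigr) \leq (E|A|^2)^{\theta/4}.
\end{equation*}
It is therefore enough to prove $E|A|^2 \leq C \cdot 2^{-n} \cdot 2^{-m(2h+\alpha-2)}$, with the constant depending on $\mathbf{w}_1(x', x'')$.

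From (\ref{eq: 4.7}), $A$ is one half of a sum of $2^{m-n}$ antisymmetric $2\times 2$ tensors of the form $\Delta_{2l-1}^{m+1} Z \otimes \Delta_{2l}^{m+1} Z - \Delta_{2l}^{m+1} Z \otimes \Delta_{2l-1}^{m+1} Z$. Decomposing $Z = (L, g)$ componentwise, the diagonal entries of each summand vanish, and the two off-diagonal entries coincide up to sign with the single scalar
\begin{equation*}
S_l := \Delta_{2l-1}^{m+1} L \cdot \Delta_{2l}^{m+1} g - \Delta_{2l}^{m+1} L \cdot \Delta_{2l-1}^{m+1} g.
\end{equation*}
Hence $|A|^2$ is controlled by a constant multiple of $\bigl(\sum_l S_l\bigr)^2$. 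Expanding the square, taking expectations, and pulling the deterministic $g$-increments outside produces four sums of the form $\sum_{l,l'} \Delta^{m+1}_i g \cdot \Delta^{m+1}_j g \cdot \rho_{i',j'}$, where $\rho_{i,j} = E[\Delta_i^{m+1} L \cdot \Delta_j^{m+1} L]$. I would bound each $|\Delta^{m+1}_j g|$ by $(2^{-(m+1)})^h \mathbf{w}_1(x',x'')^h$ via (\ref{eq: 4.2}) together with the super-additivity of $\mathbf{w}_1$ and (\ref{usedlater}); and for each fixed $l'$ I would invoke (\ref{isomorphism ineq}) to obtain
\begin{equation*}
\sum_l |\rho_{\cdot,\, j'}| \leq C_1 |x_{j'}^{m+1} - x_{j'-1}^{m+1}|^{\alpha-1} \leq C_1 (2^{-(m+1)})^{\alpha-1} \mathbf{w}_1(x',x'')^{\alpha-1}.
\end{equation*}
Summing over the $2^{m-n}$ admissible values of $l'$ then gives $E\bigl|\sum_l S_l\bigr|^2 \leq C\cdot 2^{m-n}\cdot 2^{-2(m+1)h}\cdot 2^{-(m+1)(\alpha-1)}$, which simplifies to $C'\cdot 2^{-n-m(2h+\alpha-2)}$; Jensen's inequality then delivers the stated bound.

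The main obstacle lies in the third step. First, (\ref{isomorphism ineq}) is stated as a full-range sum over the partition $\{x_l^{m+1}\}$, whereas the expansion of $\bigl(\sum_l S_l\bigr)^2$ involves $l, l'$ ranging only over the sub-partition contained in $[x_{k-1}^n, x_k^n]$; this is reconciled by observing that the restricted sum of absolute values is dominated by the full sum. Second, the four expansion terms involve different odd/even pairings of the $L$- and $g$-increments, so the bookkeeping must be uniform across all four pairings; fortunately each one accepts the same application of (\ref{isomorphism ineq}) and the same $h$-exponent bound on $g$. A minor additional check is that the exponent $2h+\alpha-2$ is positive: since $\hat q<4$ forces $h>1/4$ and $\alpha>3/2$ forces $2-\alpha<1/2$, we indeed have $2h>1/2>2-\alpha$, so the bound actually decays in $m$.
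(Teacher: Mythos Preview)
Your proposal is correct and follows essentially the same route as the paper: reduce to the second moment via Jensen, expand the square of the sum in (\ref{eq: 4.7}), bound each $g$-increment by $(2^{-(m+1)})^h\mathbf{w}_1(x',x'')^h$, and apply the covariance summation bound (\ref{isomorphism ineq}) to collapse one of the two index sums, leaving a single factor $2^{m-n}$. Your identification of the antisymmetric tensor with the single scalar $S_l$ makes the bookkeeping a bit cleaner than the paper's component-by-component expansion, but the argument is otherwise identical.
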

\noindent where $C$ is a generic constant depends on $\theta, h, \textbf{w}_1(x',x'')$, $C_1$ and $c$ in (\ref {eq: 4.2}).
\begin{proof}
First, by (\ref{eq: 4.7}), we have
\begin{eqnarray*}\nn
&&
\quad E\bigl|{\bf Z}(m+1)_{x_{k-1}^n, x_k^n}^2 -{\bf Z}(m)_{x_{k-1}^n, x_k^n}^2\bigr|^2\nn\\
&&
=\frac{1}{4}E\biggl|\sum_{l=2^{m-n}(k-1)+1}^{2^{m-n}k}\bigl(\Delta_{2l-1}^{m+1}Z\otimes\Delta_{2l}^{m+1}Z - \Delta_{2l}^{m+1}Z \otimes\Delta_{2l-1}^{m+1}Z\bigr)\biggr|^2\nn\\
&&
=\frac{1}{4}\sum_{i,j=1,{i\neq j}}^{2}\biggl[E\displaystyle\sum_{l}\bigl(\Delta_{2l-1}^{m+1}Z^i\Delta_{2l}^{m+1}Z^j - \Delta_{2l}^{m+1}Z^i \Delta_{2l-1}^{m+1}Z^j\bigr)^2\nn\\
&&
\quad\quad\quad\quad\quad+2 E\displaystyle\sum_{r<l}\bigl(\Delta_{2l-1}^{m+1}Z^i\Delta_{2l}^{m+1}Z^j - \Delta_{2l}^{m+1}Z^i \Delta_{2l-1}^{m+1}Z^j\bigr)\nn\\
&&
\quad\quad\quad\quad\quad\quad\quad\times\quad\quad\bigl(\Delta_{2r-1}^{m+1}Z^i\Delta_{2r}^{m+1}Z^j - \Delta_{2r}^{m+1}Z^i \Delta_{2r-1}^{m+1}Z^j\bigr)\biggr]\nn\\
&&
=\frac{1}{4}\displaystyle\sum_{l}E\biggl[\bigl(\Delta_{2l-1}^{m+1}L_t^x\Delta_{2l}^{m+1}g(x)\bigr)^2 -2\Delta_{2l-1}^{m+1}L_t^x\Delta_{2l}^{m+1}g(x)\Delta_{2l}^{m+1}L_t^x\Delta_{2l-1}^{m+1}g(x)\nn\\
&&
\quad\quad\quad\quad\quad\quad\quad\quad\quad\quad+\bigl( \Delta_{2l}^{m+1}L_t^x\Delta_{2l-1}^{m+1}g(x)\bigr)^2\biggr]\nn\\
&&
\,\,\,\,\,+\frac{1}{4}\displaystyle\sum_{l}E\biggl[\bigl(\Delta_{2l-1}^{m+1}g(x)\Delta_{2l}^{m+1}L_t^x\bigr)^2 -2\Delta_{2l-1}^{m+1}g(x)\Delta_{2l}^{m+1}L_t^x\Delta_{2l}^{m+1}g(x)\Delta_{2l-1}^{m+1}L_t^x\nn\\
&&
\quad\quad\quad\quad\quad\quad\quad\quad\quad\quad+\bigl(\Delta_{2l}^{m+1}g(x)\Delta_{2l-1}^{m+1}L_t^x\bigr)^2\biggr]\nn\\
&&
\,\,\,\,\,+\frac{1}{2} \displaystyle\sum_{r<l}\biggl(E\big(\Delta_{2l-1}^{m+1}L_t^x\Delta_{2r-1}^{m+1}L_t^x\bigr)(\Delta_{2l}^{m+1}g(x)\Delta_{2r}^{m+1}g(x)\bigr)\nn\\
&&
\quad\quad\quad\quad\quad\quad\quad\quad +E\big(\Delta_{2l}^{m+1}L_t^x\Delta_{2r}^{m+1}L_t^x\bigr)(\Delta_{2l-1}^{m+1}g(x)\Delta_{2r-1}^{m+1}g(x)\bigr)\biggr)\nn\\
&&
\,\,\,\,\,-\frac{1}{2} \displaystyle\sum_{r<l}\biggl(E\big(\Delta_{2l}^{m+1}L_t^x\Delta_{2r-1}^{m+1}L_t^x\bigr)(\Delta_{2l-1}^{m+1}g(x)\Delta_{2r}^{m+1}g(x)\bigr)\nn\\
&&
\quad\quad\quad\quad\quad\quad\quad\quad +E\bigl(\Delta_{2l-1}^{m+1}L_t^x\Delta_{2r}^{m+1}L_t^x\bigr)(\Delta_{2l}^{m+1}g(x)\Delta_{2r-1}^{m+1}g(x)\bigr)\biggr)\nn\\
&&
\,\,\,\,\,-\frac{1}{2}\displaystyle\sum_{r<l}\biggl(\big(\Delta_{2l}^{m+1}g(x)\Delta_{2r-1}^{m+1}g(x)\bigr)E(\Delta_{2l-1}^{m+1}L_t^x\Delta_{2r}^{m+1}L_t^x\bigr)\nn\\
&&
\quad\quad\quad\quad\quad\quad\quad\quad +\bigl(\Delta_{2l-1}^{m+1}g(x)\Delta_{2r}^{m+1}g(x)\bigr)E\big(\Delta_{2l}^{m+1}L_t^x\Delta_{2r-1}^{m+1}L_t^x\bigr)\biggr)\nn\\
&&
\,\,\,\,\,+\frac{1}{2} \displaystyle\sum_{r<l}\biggl(\bigl(\Delta_{2l-1}^{m+1}g(x)\Delta_{2r-1}^{m+1}g(x)\bigr)E\big(\Delta_{2l}^{m+1}L_t^x\Delta_{2r}^{m+1}L_t^x\bigr)\nn\\
&&
\quad\quad\quad\quad\quad\quad\quad\quad +\bigl(\Delta_{2l}^{m+1}g(x)\Delta_{2r}^{m+1}g(x)\bigr)E\bigl(\Delta_{2l-1}^{m+1}L_t^x\Delta_{2r-1}^{m+1}L_t^x\bigr)\biggr).
\end{eqnarray*}
We estimate the following term using (\ref{isomorphism ineq})
\begin{eqnarray}\nn
&&
\quad\displaystyle\sum_{r<l}\biggl|\bigl(\Delta_{2l}^{m+1}g(x)\Delta_{2r}^{m+1}g(x)\bigr)E\bigl(\Delta_{2l-1}^{m+1}L_t^x\Delta_{2r-1}^{m+1}L_t^x\bigr)\biggr|\nn\\
&&
=\sum_{l=2^{m-n}(k-1)+1}^{2^{m-n}k}\biggl|\Delta_{2l}^{m+1}g(x)\biggr|\sum_{r=1}^{l-1} \biggl|\Delta_{2r}^{m+1}g(x)\biggr| E\biggl|\Delta_{2l-1}^{m+1}L_t^x\Delta_{2r-1}^{m+1}L_t^x\biggr|\nn\\
&&
\leq \sum_{l=2^{m-n}(k-1)+1}^{2^{m-n}k}C\biggl(\frac{1}{2^{m+1}}\biggr)^h\omega_1(x',x'')^h\sum_{r=1}^{l-1}\biggl(\frac{1}{2^{m+1}}\biggr)^h\omega_1(x',x'')^hE\bigl(|\Delta_{2l-1}^{m+1}L_t^x\Delta_{2r-1}^{m+1}L_t^x|\bigr)\nn\\
&&
\leq C\sum_{l}\biggl(\frac{1}{2^{m+1}}\biggr)^{2h}\sum_{r}E\bigl(|\Delta_{2l-1}^{m+1}L_t^x\Delta_{2r-1}^{m+1}L_t^x|\bigr)\nn\\
&&
\leq C\sum_{l}\biggl(\frac{1}{2^{m+1}}\biggr)^{2h} |x_{2l-1}^{m+1}-x_{2l-2}^{m+1}|^{\alpha-1}\nn\\
&&
\leq C\Biggl[ 2^{m-n}\biggl(\frac{1}{2^{m+1}}\biggr)^{2h+\alpha-1}\Biggr]\nn\\
&&
=C \Biggl[\frac{1}{2^n}\biggl(\frac{1}{2^{m}}\biggr)^{2h+\alpha-2}\Biggr].
\end{eqnarray}
\noindent The other terms can be estimated similarly.
It then follows from Jesen's inequality that
\begin{eqnarray}
&&
\quad E\bigl|{\bf Z}(m+1)_{x_{k-1}^n, x_k^n}^2 -{\bf Z}(m)_{x_{k-1}^n, x_k^n}^2\bigr|^{\frac{\theta}{2}}\nn\\
&&
\leq \biggl(E\bigl|{\bf Z}(m+1)_{x_{k-1}^n, x_k^n}^2 -{\bf Z}(m)_{x_{k-1}^n, x_k^n}^2\bigr|^2\biggr)^{\frac{\theta}{4}}\nn\\
&&
\leq C\Biggl[\biggl(\frac{1}{2^n}\biggr)^{\frac{\theta}{4}}\biggl(\frac{1}{2^{m}}\biggr)^{\frac{2h+\alpha-2}{4}\theta}\Biggr].
\end{eqnarray}
\end{proof}
By Propositions 3.6 and 3.7, we showed the convergence of the second level path.\,The convergence result is presented in the next theorem. As $\theta>max\{p,q\}$, where $p>\frac{2}{\alpha-1}$ and $q\geq\frac{2}{3-\alpha}$ is the variation of the local time associated with the symmetric stable process and of the function $g$ respectively, together with the fact that $\frac{2}{3-\alpha}<\frac{2}{\alpha-1}$ holds as long as $\alpha<2$, therefore, the smallest possible value that $\alpha$ can take must satisfy $\alpha>\frac{2}{\theta}+1$. As $\theta$ can be chosen very close to 3, hence, the smallest possible value of $\alpha$ that we can take for the second level path is $\alpha>\frac{5}{3}$.\,\,This means for any $\alpha\in(\frac{5}{3},2)$, there exists a $\theta\in(2,3)$ such that $\alpha>\frac{2}{\theta}+1$.
\begin{theorem}\label{them 3.4.4}
Let  $\frac{5}{3}<\alpha<2$,  $\frac{2}{3-\alpha}\leq q <3$. 
Then for a continuous path $Z_x$ satisfing (\ref{eq: 4.2}), there exists a unique $\bf Z^2$ on the simplex $\triangle$ taking values in $\mathbb{R}^2\otimes \mathbb{R}^2$ such that
\begin{equation}
\displaystyle\sup_{D}\biggl(\displaystyle\sum_{l}\bigl|{\bf Z}(m)_{x_{l-1},x_l}^2-{\bf Z}_{x_{l-1},x_l}^2\bigr|^{\frac{\theta}{2}}\biggr)^{\frac{2}{\theta}}\to 0,
\end{equation}
both almost surely and in $L^1(\Omega,\mathcal{F},\mathcal{P})$ as $m\to\infty$, for some $\theta$ such that $\frac{4}{2h+\alpha-1}<\theta<3$.
\end{theorem}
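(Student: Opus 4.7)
The plan is to demonstrate that $\{{\bf Z}(m)^2\}_{m\in\mathbb{N}}$ forms a Cauchy sequence in the $\theta/2$-variation metric on $\Delta$, both pathwise almost surely and in $L^1(\Omega,\mathcal{F},\mathcal{P})$; completeness of the space of $\theta$-variation rough paths (Lemma 3.3.3 of \cite{Lyons2002}) then yields a unique limit ${\bf Z}^2$, from which the stated convergence follows.

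First, I would reduce the $\theta/2$-variation of ${\bf Z}(m+1)^2-{\bf Z}(m)^2$ over arbitrary partitions to a double dyadic sum, in the spirit of Proposition \ref{ThetaVariationProposition}. Because the difference of two multiplicative functionals is not itself multiplicative, that proposition cannot be invoked verbatim; instead one redoes its dyadic refinement argument while tracking the correction forced by Chen's relation. Applying
\begin{equation*}
{\bf Z}(m)^2_{a,c} - {\bf Z}(m)^2_{a,b} - {\bf Z}(m)^2_{b,c} = {\bf Z}(m)^1_{a,b}\otimes {\bf Z}(m)^1_{b,c}
\end{equation*}
to both ${\bf Z}(m)$ and ${\bf Z}(m+1)$ and subtracting produces a level-1 cross term
\begin{equation*}
\bigl({\bf Z}(m+1)^1_{a,b}-{\bf Z}(m)^1_{a,b}\bigr)\otimes {\bf Z}(m+1)^1_{b,c} + {\bf Z}(m)^1_{a,b}\otimes\bigl({\bf Z}(m+1)^1_{b,c}-{\bf Z}(m)^1_{b,c}\bigr),
\end{equation*}
whose dyadic sum is controlled by Theorem \ref{FirstLevelMainTheorem} together with Proposition \ref{FirstLevelProposition}. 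After this bookkeeping, one arrives at a clean dyadic upper bound of Lyons--Qian type on $\sup_D\sum_l|{\bf Z}(m+1)^2_{x_{l-1},x_l}-{\bf Z}(m)^2_{x_{l-1},x_l}|^{\theta/2}$.

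Second, I would split the resulting dyadic double sum at $n=m$ and plug in the moment estimates already proved. For $n\geq m$, Proposition \ref{SecondLevelVariationProposition} gives $\sum_k E|\cdot|^{\theta/2}\leq C(1/2^{n+m})^{(h\theta-1)/2}$; since $h\theta>1$, the sum over $n\geq m$ is geometric and contributes $O\bigl(m^\gamma(1/2^m)^{h\theta-1}\bigr)$. For $n<m$, Proposition \ref{SecondLevelConvergence} combined with $\sum_k 1=2^n$ gives $\sum_k E|\cdot|^{\theta/2}\leq C\,2^{n(1-\theta/4)}(1/2^m)^{(2h+\alpha-2)\theta/4}$; since $\theta<4$ this sum in $n$ is dominated by its $n=m$ term, contributing $O\bigl(m^\gamma(1/2^m)^{(2h+\alpha-1)\theta/4-1}\bigr)$, which decays geometrically in $m$ exactly when $\theta>4/(2h+\alpha-1)$, the precise lower bound appearing in the statement. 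Combining the two regimes, $E\bigl[\sup_D\sum_l|{\bf Z}(m+1)^2-{\bf Z}(m)^2|^{\theta/2}\bigr]$ decays geometrically in $m$. Taking $(\cdot)^{2/\theta}$ and summing over $m$ gives $\sum_m E\|{\bf Z}(m+1)^2-{\bf Z}(m)^2\|_{\theta/2\text{-var}}<\infty$, which delivers the $L^1$ Cauchy property, and an application of Markov's inequality with the Borel--Cantelli lemma upgrades this to almost sure summability, hence to the claimed almost sure convergence. Uniqueness of the limit ${\bf Z}^2$ is immediate from Cauchyness.

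The principal obstacle is the first step. Proposition \ref{ThetaVariationProposition} is phrased for a single multiplicative functional, and its conclusion cannot be applied to the non-multiplicative object ${\bf Z}(m+1)-{\bf Z}(m)$; one must therefore redo the refinement by hand, and the only reason the Chen corrections do not wreck the estimate is that Theorem \ref{FirstLevelMainTheorem} makes their dyadic sums summable. That is precisely why the first-level convergence had to be settled in advance. Once this extension of Proposition \ref{ThetaVariationProposition} to differences is in place, the remaining work is the geometric-series computation in the second step, governed by the two conditions $h\theta>1$ and $\theta>4/(2h+\alpha-1)$, which together carve out the admissible range for $\theta$.
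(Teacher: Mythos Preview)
Your proposal is correct and follows essentially the same route as the paper. The only cosmetic difference is that the paper obtains your ``extension of Proposition \ref{ThetaVariationProposition} to differences'' by directly invoking Proposition 4.1.2 of \cite{Lyons2002}, which packages exactly the Chen-correction bookkeeping you describe and produces the $A+B$ split (level-1 cross terms plus pure level-2 dyadic sum); the subsequent estimates of $A$ via Theorem \ref{FirstLevelMainTheorem} and of $B$ via Propositions \ref{SecondLevelVariationProposition} and \ref{SecondLevelConvergence}, together with the $\epsilon$-balancing that forces $\theta>4/(2h+\alpha-1)$, match your second step.
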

\begin{proof}
By Proposition 4.1.2 in \cite{Lyons2002}, we have
\begin{eqnarray}
&&
\quad E\displaystyle\sup_{D}\sum_{l}\bigl|{\bf Z}(m+1)_{x_{l-1},x_l}^2 - {\bf Z}(m)_{x_{l-1},x_l}^2 \bigr|^{\frac{\theta}{2}}\nn\\
&&
\leq C(\theta,\gamma)E\biggl(\sum_{n=1}^{\infty}n^{\gamma}\sum_{k=1}^{2^n}\bigl|{\bf Z}(m+1)_{x_{k-1}^n,x_k^n}^1 - {\bf Z}(m)_{x_{k-1}^n,x_k^n}^1\bigr|^{\theta}\biggr)^{\frac{1}{2}}\nn\\
&&
\quad\quad\quad\times\biggl(\sum_{n=1}^{\infty}n^{\gamma}\sum_{k=1}^{2^n}\bigl|{\bf Z}(m+1)_{x_{k-1}^n,x_k^n}^1\bigr|^{\theta}+ \bigl|{\bf Z}(m)_{x_{k-1}^n,x_k^n}^1\bigr|^{\theta}\biggr)^{\frac{1}{2}}\nn\\
&&
\quad\quad\quad+ C(\theta,\gamma)E\sum_{n=1}^{\infty}n^{\gamma}\sum_{k=1}^{2^n}\bigl|{\bf Z}(m+1)_{x_{k-1}^n,x_k^n}^2 - {\bf Z}(m)_{x_{k-1}^n,x_k^n}^2\bigr|^{\frac{\theta}{2}}\nn\\
&&
:=A + B.
\end{eqnarray}

We have proved the convergence of the first level path in Theorem \ref{FirstLevelMainTheorem}.  The result from Theorem \ref{FirstLevelMainTheorem} is used to estimate the part A, that is
\begin{eqnarray}
&&
A\leq C \biggl(E\sum_{n=1}^{\infty}n^{\gamma}\sum_{k=1}^{2^n}\bigl|{\bf Z}(m+1)_{x_{k-1}^n,x_k^n}^1-{\bf Z}_{x_{k-1}^n,x_k^n}^1\bigr|^{\theta}+\bigl| {\bf Z}(m)_{x_{k-1}^n,x_k^n}^1-{\bf Z}_{x_{k-1}^n,x_k^n}^1\bigr|^{\theta}\biggr)^{\frac{1}{2}}\nn\\
&&
\quad\quad\quad\times\biggl(E\sum_{n=1}^{\infty}n^{\gamma}\sum_{k=1}^{2^n}\bigl|{\bf Z}(m+1)_{x_{k-1}^n,x_k^n}^1\bigr|^{\theta}+ \bigl|{\bf Z}(m)_{x_{k-1}^n,x_k^n}^1\bigr|^{\theta}\biggr)^{\frac{1}{2}}\nn\\
&&
\quad\leq C\Biggl[\biggl(\frac{1}{2^m}\biggr)^{\frac{h\theta-1}{4}}\biggl(\sum_{n=1}^{\infty}n^\gamma\biggl(\frac{1}{2^n}\biggr)^{{h\theta-1}}\biggr)^\frac{1}{2}\Biggr]\nn\\
&&
\quad\leq C \biggl(\frac{1}{2^m}\biggr)^{\frac{h\theta-1}{4}}.
\end{eqnarray}


For $\frac{5}{3}<\alpha<2$ and $h>\frac{1}{3}$, we can choose $\theta$ satisfies $\frac{4}{2h+\alpha-1}<\theta<3$. 
Therefore, $\frac{2h+\alpha-2}{4}\theta>1-\frac{\theta}{4}$. Hence, we can choose an $\epsilon$ such that $1-\frac{\theta}{4}<\epsilon<\frac{2h+\alpha-2}{4}\theta$.
Then by Proposition \ref{SecondLevelVariationProposition} and \ref{SecondLevelConvergence}, it follows that 
\begin{eqnarray*}\nn
&&
B\leq C\sum_{n=m}^{\infty}n^\gamma\biggl(\frac{1}{2^{m+n}}\biggr)^{\frac{h\theta-1}{2}} + C\sum_{n=1}^{m-1}n^\gamma\biggl(\frac{1}{2^{n}}\biggr)^{\frac{\theta}{4}-1}\biggl(\frac{1}{2^{m}}\biggr)^{\frac{2h+\alpha-2}{4}\theta}\nn\\
&&
\quad\leq C\Biggl[\biggl(\frac{1}{2^{m}}\biggr)^{\frac{h\theta-1}{2}} +\sum_{n=1}^{m-1}n^\gamma\biggl(\frac{1}{2^{n}}\biggr)^{\frac{\theta}{4}-1 + \epsilon}\biggl(\frac{1}{2^{m}}\biggr)^{\frac{2h+\alpha-2}{4}\theta-\epsilon}  \Biggr]\nn\\
&&
\quad\leq C\Biggl[\biggl(\frac{1}{2^{m}}\biggr)^{\frac{h\theta-1}{2}} +\biggl(\frac{1}{2^{m}}\biggr)^{\frac{2h+\alpha-2}{4}\theta-\epsilon} \Biggr].
\end{eqnarray*}
Hence, we proved the convergence of the term B. With the above observation and the fact that $h\theta>1$, it is clear that
\begin{equation*}
E\displaystyle\sup_{D}\sum_{l}\bigl|{\bf Z}(m+1)_{x_{l-1},x_l}^2 - {\bf Z}(m)_{x_{l-1},x_l}^2 \bigr|^{\frac{\theta}{2}}\leq C\Biggl[\biggl(\frac{1}{2^{m}}\biggr)^{\frac{h\theta-1}{2}} +\biggl(\frac{1}{2^{m}}\biggr)^{\frac{2h+\alpha-2}{4}\theta-\epsilon}  \Biggr].
\end{equation*}

If we sum up for all $m$ as we did in Theorem \ref{FirstLevelMainTheorem}, one can show that $\bigl({\bf Z}(m)^2\bigr)_{m\in N}\in (\mathbb{R}^2)^{\otimes 2}$ is a Cauchy sequence in $\theta$-variation distance. In other words, it has a limit as $m\to\infty$, denote it by ${\bf Z}^2\in (\mathbb{R}^2)^{\otimes 2}$. By Lemma 3.3.3 in \cite{Lyons2002}, we can conclude that ${\bf Z}^2$ is also finite under $\theta$-variation distance. Thus, we have proved the theorem.
\end{proof}

\noindent As local time $L_t^x$ has a compact support for each $\omega$ and $t$, so the integral of local time in $\mathbb{R}$ can be defined.\,\,We take $[x',x'']$ which contains the support of $L_t^x$.\,\,By Chen's identity, one can see that for any $(a,b)\in\Delta$,
\begin{equation}
{\bf Z}_{a,b}^2 = \displaystyle\lim_{m(D_{[a,b]})\to 0}\sum_{i=0}^{r-1}({\bf Z}_{x_{i-1},x_i}^2 + {\bf Z}_{a, x_i}^1\otimes {\bf Z}_{x_{i-1},x_i}^1).
\end{equation}

In particular, similar to the proof in \cite{Feng2010}, we have
\begin{eqnarray}
&&
({\bf Z}_{a,b}^2)_{2,1} = \displaystyle\lim_{m(D_{[a,b]})\to 0}\sum_{i=0}^{r-1}(({\bf Z}_{x_{i-1},x_i}^2)_{2,1} +( {\bf Z}_{a, x_i}^1\otimes {\bf Z}_{x_{i-1},x_i}^1)_{2,1})\nn\\
&&
\quad\quad\quad\quad=\displaystyle\lim_{m(D_{[a,b]})\to 0}\sum_{i=0}^{r-1}(({\bf Z}_{x_{i-1},x_i}^2)_{2,1} +( g(x_i)-g(a))(L_t^{x_i}-L_t^{ x_{i-1}}).
\end{eqnarray}
Here $({\bf Z}_{x_{i-1},x_i}^2)_{2,1} $ denotes the lower-left element of the $2\times2$ matrix ${\bf Z}_{x_{i-1},x_i}^2$. Hence, the following
\begin{eqnarray}
&&
\quad\displaystyle\lim_{m(D_{[a,b]})\to 0}\sum_{i=0}^{r-1}(({\bf Z}_{x_{i-1},x_i}^2)_{2,1} + g(x_i)(L_t^{x_i}-L_t^{ x_{i-1}})\nn\\
&&
=\displaystyle\lim_{m(D_{[a,b]})\to 0}\sum_{i=0}^{r-1}(({\bf Z}_{x_{i-1},x_i}^2)_{2,1} +  ( g(x_i)-g(a))(L_t^{x_i}-L_t^{ x_{i-1}})) + g(a)(L_t^{b}-L_t^{a})\nn\\
\end{eqnarray}
holds.
   \,Therefore, we have the following corollary.
\begin{corollary}\label{roughpathintegraldef}
Under the same conditions of the previous theorem, then 
for $(a,b)\in\Delta$,
\begin{equation}
\int_{a}^{b} L_t^xdL_t^x = \displaystyle\lim_{m(D_{[a,b]})\to 0}\sum_{i=0}^{r-1}(({\bf Z}_{x_{i-1},x_i}^2)_{1,1} + L_t^{x_i}(L_t^{x_i}-L_t^{ x_{i-1}})).
\end{equation}
Moreover, if g is a continuous function with bounded $q$-variation, $q\geq\frac{2}{3-\alpha}$, then we have
\begin{equation}
\int_{a}^{b} g(x)dL_t^x = \displaystyle\lim_{m(D_{[a,b]})\to 0}\sum_{i=0}^{r-1}(({\bf Z}_{x_{i-1},x_i}^2)_{2,1} + g(x_i)(L_t^{x_i}-L_t^{ x_{i-1}})).
\end{equation}
\end{corollary}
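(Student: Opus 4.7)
The plan is to deduce both identities as immediate algebraic consequences of the Chen-type decomposition displayed in the lines just above the corollary, with all analytic content absorbed in Theorem \ref{them 3.4.4}.

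First I would read Chen's identity at the $(2,1)$-entry of the $2\times 2$ matrix ${\bf Z}^2_{a,b}$, giving
\begin{equation*}
({\bf Z}_{a,b}^2)_{2,1} = \lim_{m(D_{[a,b]}) \to 0} \sum_{i=0}^{r-1}\Big[({\bf Z}_{x_{i-1},x_i}^2)_{2,1} + (g(x_i) - g(a))(L_t^{x_i} - L_t^{x_{i-1}})\Big],
\end{equation*}
a limit whose existence rests on Theorem \ref{them 3.4.4}, which produces ${\bf Z}^2$ on the simplex $\Delta$ as the $\theta$-variation limit of ${\bf Z}(m)^2$ for the $\mathbb{R}^2$-valued path $Z_x=(L_t^x,g(x))$. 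Then I would use the trivial telescoping
\begin{equation*}
\sum_{i=0}^{r-1} g(x_i)(L_t^{x_i} - L_t^{x_{i-1}}) = \sum_{i=0}^{r-1}(g(x_i) - g(a))(L_t^{x_i} - L_t^{x_{i-1}}) + g(a)(L_t^b - L_t^a)
\end{equation*}
to recast the Riemann-type sum appearing in the statement as
\begin{equation*}
\sum_{i=0}^{r-1}\!\Big[({\bf Z}_{x_{i-1},x_i}^2)_{2,1} + g(x_i)(L_t^{x_i} - L_t^{x_{i-1}})\Big] = \sum_{i=0}^{r-1}\!\Big[({\bf Z}_{x_{i-1},x_i}^2)_{2,1} + (g(x_i) - g(a))(L_t^{x_i} - L_t^{x_{i-1}})\Big] + g(a)(L_t^b - L_t^a).
\end{equation*}

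Passing to the limit $m(D_{[a,b]}) \to 0$, the first sum on the right converges to $({\bf Z}_{a,b}^2)_{2,1}$ by Chen's identity, while $g(a)(L_t^b - L_t^a)$ is independent of the partition. Hence the limit on the left exists and equals $({\bf Z}_{a,b}^2)_{2,1} + g(a)(L_t^b - L_t^a)$, which is, by the canonical rough path prescription, precisely the integral $\int_a^b g(x)\,dL_t^x$. For the first displayed identity, I would repeat the argument verbatim, reading Chen's identity at the $(1,1)$-entry of ${\bf Z}^2$ with $L_t^{\cdot}$ playing the role of $g$ in the second coordinate; the telescoping then yields $\int_a^b L_t^x\, dL_t^x = ({\bf Z}_{a,b}^2)_{1,1} + L_t^a (L_t^b - L_t^a)$, and the right-hand side is the asserted limit.

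There is no genuine obstacle inside the corollary itself: the entire analytic load has been discharged upstream in Theorem \ref{them 3.4.4} and the preceding $\theta$-variation estimates on ${\bf Z}(m)^2$. The only point warranting a brief verification is that Chen's identity, originally a property of the dyadic smooth approximants ${\bf Z}(m)$, propagates to arbitrary (not merely dyadic) partitions of $[a,b]$ after passing to the $\theta$-variation limit; this is standard once one knows that ${\bf Z}$ is a multiplicative functional in $C_{0,\theta}(\Delta, T^{([\theta])}(\mathbb{R}^2))$, so that its multiplicative structure survives the limit and holds on any $D_{[a,b]}$.
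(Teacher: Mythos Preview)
Your proposal is correct and follows essentially the same route as the paper: the paper derives the corollary from Chen's identity for ${\bf Z}^2_{a,b}$ (extracted at the $(2,1)$-entry) together with the same telescoping that adds and subtracts $g(a)(L_t^b-L_t^a)$, with the existence of ${\bf Z}^2$ supplied by Theorem~\ref{them 3.4.4}. Your remark that the multiplicative structure of ${\bf Z}$ survives the $\theta$-variation limit and hence Chen's identity holds for arbitrary partitions is exactly the implicit justification the paper relies on.
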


\subsection{Convergence of rough path integrals for the second level path}
In this section, we will prove the convergence of the second level path in the $\theta$-variation topology.
\begin{proposition}\label{continuitysecondlevel}
Let $\frac{5}{3}<\alpha<2$, $\frac{2}{3-\alpha}\leq q<3$, one can choose a $\theta$ such that $\frac{4}{2h+\alpha-1}<\theta<3$. 
Moreover, let $Z_j(x): = (L_t^x, g_j(x)), Z(x):=(L_t^x, g(x))$, where $g_j(\cdot), g(\cdot)$ are both continuous and of bounded $q$-variation.\,Suppose $g_j(x)\to g(x)$ as $j\to\infty$ uniformly and the control function ${\bf{w}}_j(x,y)$ of $g_j$ converges to the control function ${\bf{w}}(x,y)$ of $g$ as $j\to\infty$ uniformly.\,Then the geometric rough path ${\bf Z}_{j}(\cdot)$ associated with $Z_{j}(\cdot)$ converges to the geometric rough path ${\bf Z}(\cdot)$ associated with $Z(\cdot)$ a.s. in the $\theta$-variation topology as $j\to\infty$. In particular, $\int_{-\infty}^{\infty} g_{j}(x)dL_t^x\to \int_{-\infty}^{\infty} g(x)dL_t^x$ a.s. as $j\to\infty.$
\end{proposition}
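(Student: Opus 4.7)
The plan is to prove convergence of the smooth rough paths $\mathbf{Z}_j$ to $\mathbf{Z}$ in $d_\theta$ by a standard three-term triangle argument combined with the uniform estimates underlying the construction in Theorem \ref{them 3.4.4}.

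First I would introduce the dyadic approximations $\mathbf{Z}_j(m)$ and $\mathbf{Z}(m)$ constructed from $Z_j(\cdot)$ and $Z(\cdot)$ via formulas (\ref{eq: 4.3})-(\ref{eq: 4.4}), where the partitions $D_m$ are chosen using the control function $\mathbf{w}_{1,j}$ (resp.\ $\mathbf{w}_1$) as in (\ref{usedlater}). By Theorem \ref{them 3.4.4} we know that $\mathbf{Z}_j(m)\to\mathbf{Z}_j$ and $\mathbf{Z}(m)\to\mathbf{Z}$ in $d_\theta$ as $m\to\infty$. The key point I would extract from the proof of Theorem \ref{them 3.4.4} is that the Cauchy estimate there has the form
\begin{equation*}
E\,d_\theta(\mathbf{Z}(m+1),\mathbf{Z}(m))\leq C\Bigl[(2^{-m})^{(h\theta-1)/2} + (2^{-m})^{(2h+\alpha-2)\theta/4-\epsilon}\Bigr],
\end{equation*}
where the constant $C$ depends only on $\theta,h,c$ in (\ref{eq: 4.2}) and on $\mathbf{w}_1(x',x'')$. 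Under the hypothesis that $\mathbf{w}_j\to\mathbf{w}$ uniformly, we have $\sup_j\mathbf{w}_{1,j}(x',x'')<\infty$, so this constant can be taken uniform in $j$.

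Then I would apply the triangle inequality
\begin{equation*}
d_\theta(\mathbf{Z}_j,\mathbf{Z})\leq d_\theta(\mathbf{Z}_j,\mathbf{Z}_j(m))+d_\theta(\mathbf{Z}_j(m),\mathbf{Z}(m))+d_\theta(\mathbf{Z}(m),\mathbf{Z}).
\end{equation*}
Given $\eta>0$, fix $m=m(\eta)$ large so that the outer two terms are each below $\eta/3$ almost surely; this can be done simultaneously for all $j$ by the uniform bound above (and a Borel-Cantelli argument as in Theorem \ref{FirstLevelMainTheorem}). For the middle term with $m$ fixed, $\mathbf{Z}_j(m)$ and $\mathbf{Z}(m)$ are both piecewise smooth paths on the same (or nearly the same) finite dyadic partition, and their first level increments are $Z_j(x_k^m)-Z_j(x_{k-1}^m)\to Z(x_k^m)-Z(x_{k-1}^m)$ as $j\to\infty$ by the uniform convergence $g_j\to g$. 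The corresponding iterated integrals, being polynomial in these finitely many increments, also converge. Hence for each fixed $m$ the middle term goes to $0$ a.s.\ as $j\to\infty$, and choosing $j$ large enough gives $d_\theta(\mathbf{Z}_j,\mathbf{Z})<\eta$.

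The main obstacle is making the ``uniform in $j$'' bound on the tail $d_\theta(\mathbf{Z}_j,\mathbf{Z}_j(m))$ rigorous: one has to revisit the estimates in Propositions \ref{SecondLevelVariationProposition} and \ref{SecondLevelConvergence} and observe that the only $g_j$-dependent ingredient is $\mathbf{w}_{1,j}(x',x'')$, which is uniformly bounded by hypothesis, while the local-time ingredients (\ref{eq: 4.1}) and (\ref{isomorphism ineq}) are $j$-independent. The small discrepancy arising because $\mathbf{Z}_j(m)$ and $\mathbf{Z}(m)$ are built on slightly different partitions can be handled by replacing $\mathbf{w}_{1,j}$ and $\mathbf{w}_1$ by a common control, e.g.\ $\mathbf{w}_{1,j}+\mathbf{w}_1$, and using the uniform convergence $\mathbf{w}_{1,j}\to\mathbf{w}_1$ to compare the two dyadic partitions on the nose. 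Once the rough path convergence in $d_\theta$ is established, the statement $\int_{-\infty}^{\infty}g_j(x)\,d_x L_t^x\to\int_{-\infty}^{\infty}g(x)\,d_x L_t^x$ a.s.\ follows immediately from the representation in Corollary \ref{roughpathintegraldef}, since convergence of $\mathbf{Z}_j^2$ to $\mathbf{Z}^2$ in $d_{2,\theta}$ together with the uniform convergence $g_j\to g$ yields convergence of the partial sums defining the integral.
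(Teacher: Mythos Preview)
Your proposal is correct and follows essentially the same three-term triangle argument as the paper: split $d_\theta(\mathbf{Z}_j,\mathbf{Z})$ via the dyadic approximations $\mathbf{Z}_j(m),\mathbf{Z}(m)$, use the Cauchy estimates from Theorem~\ref{them 3.4.4} (uniform in $j$ because only $\mathbf{w}_{1,j}(x',x'')$ enters) to control the outer terms, and handle the middle term at fixed $m$ via the bounded-variation/polynomial structure of the smooth rough paths. You are in fact slightly more explicit than the paper about the partition-discrepancy issue arising from the different controls $\mathbf{w}_{1,j}$ versus $\mathbf{w}_1$; the paper treats the first and second levels separately rather than bundling them into $d_\theta$, but the logic is the same.
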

\begin{proof} 
For each $j$, one can obtain the geometric rough path ${\bf Z}_j(\cdot)$ associated with $Z_j(\cdot)$, and also the smooth rough path ${\bf Z}_j(m)$ in the same way as ${\bf Z}(m)$.  Here the ${\bf Z}_j$ is defined as  ${\bf Z}_j = (1, {\bf Z}_j^1, {\bf Z}_j^2).$ 
\,Similarly, we have ${\bf Z}_j(m) = (1, {\bf Z}_j^1(m), {\bf Z}_j^2(m)).$ 
\,First, we prove the convergence of ${\bf Z}_j^1\to{\bf Z}^1$  in the $\theta$-variation topology and in the uniform topology. To see this, we consider for any finite interval $[x', x'']$ in $\mathbb{R}$. As local time $L_t^x$ has a compact support in $x$ a.s., so the following proof can be extended to $\mathbb{R}$.\,To prove that ${\bf {Z}}_{j}^1\to\bf Z^1$ as $j\to\infty$ in the $d_{2,\theta}$ topology, note first 

\begin{equation}\label{eq: A1}
d_{2,\theta}({\bf Z}_j^1,{\bf Z}^1)\leq d_{2,\theta}({\bf Z}_j^1,{\bf Z}_j^1(m))+d_{2,\theta}({\bf Z}_j^1(m),{\bf Z}^1(m))+d_{2,\theta}({\bf Z}^1(m),{\bf Z}^1).
\end{equation}
From  (\ref{eq: p12}), we know that $d_{2,\theta}({\bf Z}^1(m),{\bf Z}^1)\to 0$ as $m\to\infty$ and $d_{2,\theta}({\bf Z}_j^1,{\bf Z}_j^1(m))\to 0$ as $m\to\infty$ uniformly in $j$. Thus there exists an integer $m_0$ such that $d_{2,\theta}({\bf Z}^1(m_0),{\bf Z}^1)<\frac{\epsilon}{3}$ and  $d_{2,\theta}({\bf Z}_j^1,{\bf Z}_j^1(m_0))<\frac{\epsilon}{3}$. Consider ${\bf Z}_j^1(m_0)$ and ${\bf Z}^1(m_0)$, which are bounded variation processes and  ${\bf Z}_j^1(m_0)(x)\to{\bf Z}^1(m_0)(x)$ as $j\to\infty$ uniformly in $x$. Moreover,
\begin{equation}
E \displaystyle\sup_D \displaystyle\sum_l\Biggl|\biggl({\bf Z}_j^1(m_0)(x_l)-{\bf Z}^1(m_0)(x_l)\biggr)-\biggl({\bf Z}_j^1(m_0)(x_{l-1})-{\bf Z}^1(m_0)(x_{l-1})\biggr)\Biggr|^2
\end{equation}
exists and bounded uniformly in $j$. Thus, by Fatou's Lemma, we have
\begin{eqnarray*}
&&
\quad\displaystyle\limsup_{j\to\infty} E \displaystyle\sup_D \displaystyle\sum_l\Biggl|\biggl({\bf Z}_{j}^1(m_0)(x_l)-{\bf Z}^1(m_0)(x_l)\biggr)-\biggl({\bf Z}_{j}^1(m_0)(x_{l-1})-{\bf Z}^1(m_0)(x_{l-1})\biggr)\Biggr|^2\nn\\
&&
\leq E \displaystyle\lim_{j\to\infty}\displaystyle\sup_{D_{[a,b]}} \displaystyle\sum_l\Biggl|\biggl({\bf Z}_{j}^1(m_0)(x_l)-{\bf Z}^1(m_0)(x_l)\biggr)-\biggl({\bf Z}_{j}^1(m_0)(x_{l-1})-{\bf Z}^1(m_0)(x_{l-1})\biggr)\Biggr|^2\nn\\
&&
 =E \displaystyle\sup_{D_{[a,b]}} \displaystyle\sum_l\displaystyle\lim_{j\to\infty}\Biggl|\biggl({\bf Z}_{j}^1(m_0)(x_l)-{\bf Z}^1(m_0)(x_l)\biggr)-\biggl({\bf Z}_{j}^1(m_0)(x_{l-1})-{\bf Z}^1(m_0)(x_{l-1})\biggr)\Biggr|^2\nn\\
&&
=0.
\end{eqnarray*}


The exchange of $\displaystyle\lim_{j\to\infty}$ and $\displaystyle\sup_D$ is due to the fact that 
\begin{equation*}
\displaystyle\lim_{j\to\infty}\displaystyle\sum_l\Biggl|\biggl({\bf Z}_{j}^1(m_0)(x_l)-{\bf Z}^1(m_0)(x_l)\biggr)-\biggl({\bf Z}_{j}^1(m_0)(x_{l-1})-{\bf Z}^1(m_0)(x_{l-1})\biggr)\Biggr|^2=0
\end{equation*}
uniformly with the partition $D_{[a,b]}$. Thus, we revisit (\ref{eq: A1}) and apply $m=m_0$  to conclude there exists $J_0$ such that when $j\geq J_0$
\begin{equation*}
d_{2,\theta}\biggl({\bf Z}_{j}^1,\bf Z^1\biggr)<\epsilon.
\end{equation*}

Thus, 
\begin{equation*}\displaystyle\limsup_{j\to\infty} E \displaystyle\sup_D \displaystyle\sum_l\Biggl|\biggl({\bf Z}_{j}^1(m_0)(x_l)-{\bf Z}^1(m_0)(x_l)\biggr)-\biggl({\bf Z}_{j}^1(m_0)(x_{l-1})-{\bf Z}^1(m_0)(x_{l-1})\biggr)\Biggr|^2=0.\end{equation*}
For the convergence of  ${\bf Z}_j^2\to{\bf Z}^2$, similarly, we have
\begin{equation}\label{secondlevelinequality}
d_{2,\theta}({\bf Z}_j^2, {\bf Z}^2)\leq d_{2,\theta}({\bf Z}_j^2, {\bf Z}_j^2(m))+d_{2,\theta}({\bf Z}_j^2(m), {\bf Z}^2(m))+d_{2,\theta}({\bf Z}^2(m), {\bf Z}^2).
\end{equation}

The convergence of the last term of (\ref{secondlevelinequality}) as $m\to\infty$ is clear from Theorem 3.8.\,\,From the proofs of Proposition 3.6, 3.7 and Theorem 3.8, one can show the convergence of the first term of (\ref{secondlevelinequality}) uniformly in $j$ as $m\to\infty$.\,\,That is to say, for any given $\epsilon>0$, one can find a N such that for $m\geq N$,  $d_{2,\theta}({\bf Z}_j^2, {\bf Z}_j^2(m))<\frac{\epsilon}{3}$ for all $j$, and $d_{2,\theta}({\bf Z}^2(m), {\bf Z}^2)<\frac{\epsilon}{3}$.\,\,In particular, the above inequality also holds if we replace $m$ by N. For a fixed partition of $[x', x'']$ and this N, one can show by the same method as in the proof of $d_{2,\theta}({\bf Z}_{j}^1(m_0), {\bf Z}^1(m_0))$ as $j\to\infty$ that $d_{2,\theta}({\bf Z}_j^2(N), {\bf Z}^2(N))<\frac{\epsilon}{3}$ by the bounded variation property of the smooth rough path.\,This can be seen as ${\bf Z}_j^2(N)$ and ${\bf Z}^2(N)$ are just tensor product of bounded variation paths
${\bf Z}_j(N)$ and ${\bf Z}(N)$.\,\,Thus ${\bf Z}_j^2(N)(x)$ also converge to ${\bf Z}^2(N)(x)$ uniformly in $x$.\,By using a similar method as in the proof $d_{2,\theta}({\bf Z}_{j}^1(m), {\bf Z}^1(m))\to 0$ as $j\to\infty$, we can prove that $d_{2,\theta}({\bf Z}_{j}^2(N), {\bf Z}^2(N))\to 0$ as $j\to\infty$ so there exists an integer $J>0$ such that $j\geq J$, $d_{2,\theta}({\bf Z}_{j}^2(N), {\bf Z}^2(N))<\frac{\epsilon}{3}.$
Hence, for $j\geq J$, it follows from (\ref{secondlevelinequality}) for $m=N$ that $d_{2,\theta}({\bf Z}_j^2, {\bf Z}^2)\leq\epsilon$. The first claim is asserted. By the definition of $\int_{-\infty}^{\infty} g_j(x)dL_t^x$, one can conclude the second claim.
\end{proof}
Proposition \ref{continuitysecondlevel} is also true for $g$ being of bounded $q$-variation $(\frac{2}{3-\alpha}\leq q<3$) but not being continuous. For the discontinuous case, we use the method from \cite{Williams2001} by adding a fictitious space interval during which linear segments remove the discontinuity, also bear in mind that a function with bounded q-variation has at most countable jumps. 
\begin{definition}\label{deftau}
Let $g(x)$ is c\`adl\`ag in x of finite $q$-variation and set $G(x):= (g(x), L_t(x))$. Let $\delta>0$, for each $n\geq 1$, let $x_n$ be the point of the n-th largest jump of g. Define a map
\[
\tau_{\delta}: [x', x'']\to [x', x''+\delta\sum_{n=1}^{\infty}|h(x_n)|^q]\]
in the following way
\[
\tau_{\delta}(x) =  x+\delta\sum_{n=1}^{\infty}|h(x_n)|^q 1_{x_n\leq x} (x), \]
where $h(x_n) := G(x_n) - G(x_n-).$
 
The map $\tau_{\delta}: [x', x'']\to [x', \tau_{\delta}(x'')]$ extends the space interval into one where we define the continuous path $G_{\delta}(y)$ from a  c\`adl\`ag path G by
\[
G_{\delta}(y)=
\begin{cases}\label{cases}
G(x), &\text{if $y=\tau_{\delta}(x)$;}\\
G(x_n-)+(y-\tau_{\delta}(x_n-))h(x_n)\delta^{-1}|h(x_n)|^{-q}, &\text{if $y\in[\tau_{\delta}(x_n-),\tau_{\delta}(x_n))$.}\\
\end{cases}
\]
\end{definition}
Notice that $L_{t,\delta}(y):=L_{t,\delta}(\tau_{\delta}(x)) = L_t^x$ as $L_t^x$ is continuous.
Let $g(x)$ be a  c\`adl\`ag path with bounded $q$-variation $(\frac{2}{3-\alpha}\leq q<3)$, we define 
\begin{equation}\label{final}
\int_{x'}^{x''} L_t^xdg(x) = \int_{x'}^{x''} L_t^xdg^c(x)+\sum_r L_t^{x_r}(h(x_r)-h({x_r}-)),
\end{equation}
where the discontinuous $g$ is decomposed into its continuous part $g^c$ and its jump part $h$.
\begin{theorem}\label{continuousthmsecondlevel}
Let g(x) be a  c\`adl\`ag path with bounded $q$-variation $(\frac{2}{3-\alpha}\leq q<3)$.\,\,Then
\begin{equation}\label{final}
\int_{x'}^{x''} L_t^xdg(x) = \int_{x'}^{\tau_{\delta}(x'')} L_{t,\delta}(y)dg_{\delta}(y).
\end{equation}
\end{theorem}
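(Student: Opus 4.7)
The plan is to evaluate the right-hand side via the Riemann-sum characterization analogous to Corollary \ref{roughpathintegraldef} and show that it splits cleanly into the two terms that constitute the defining identity for $\int_{x'}^{x''} L_t^x dg(x)$ when $g$ is c\`adl\`ag. The essential observations are that $L_{t,\delta}$ is continuous (since $L_t^x$ is continuous in $x$) and is therefore constant on each inserted linear segment $[\tau_\delta(x_n-),\tau_\delta(x_n)]$ with common value $L_t^{x_n}$, while $g_\delta$ is linear on this segment with total increment $g(x_n)-g(x_n-)$.

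First, fix $\varepsilon>0$ and choose $N$ so large that $\sum_{n>N}|h(x_n)|^q<\varepsilon$, which is possible because $g$ has bounded $q$-variation. Take a sequence of partitions $D_m$ of $[x',\tau_\delta(x'')]$ with mesh tending to zero, each containing every endpoint $\tau_\delta(x_n-),\tau_\delta(x_n)$ for $n\leq N$. Write the Riemann sum $S_m$ for the right-hand side and split it into three pieces: (a) sub-sums over sub-intervals lying entirely inside some linear segment with $n\leq N$; (b) sub-sums over sub-intervals inside the complement of these segments; (c) the remainder from sub-intervals meeting linear segments with $n>N$. Since $L_{t,\delta}$ is constant on each relevant segment, on piece (a) the level-$1$ terms telescope to $L_t^{x_n}(g(x_n)-g(x_n-))$ and the level-$2$ contributions vanish (the iterated integral of a constant against a linear path being zero), so piece (a) converges to $\sum_{n\leq N} L_t^{x_n}(g(x_n)-g(x_n-))$.

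For piece (b), the map $y=\tau_\delta(x)$ is strictly increasing and bijective away from the jumps, with $L_{t,\delta}(\tau_\delta(x))=L_t^x$ and $g_\delta(\tau_\delta(x))$ coinciding with $g(x)$ at continuity points. After this change of variable the corresponding sums approximate the rough path integral $\int_{x'}^{x''} L_t^x dg^c(x)$ constructed through Propositions \ref{SecondLevelVariationProposition}--\ref{SecondLevelConvergence} and Proposition \ref{continuitysecondlevel}. For piece (c), both level-$1$ and level-$2$ contributions are controlled by a constant multiple of $\sum_{n>N}|h(x_n)|^q$, using $\sup_x L_t^x<\infty$ a.s.\ and the control-function estimates. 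Summing the three pieces, letting $m\to\infty$ and then $\varepsilon\to 0$, yields the claimed identity.

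The main technical obstacle is in piece (b): the change of variable must be made rigorous at the \emph{rough-path level}, meaning that the level-$2$ iterated integrals $({\bf Z}_\delta^2)$ restricted to the complementary portion, combined with the vanishing level-$2$ contributions from the inserted linear segments, must be shown to reconstitute the level-$2$ rough path associated with $(L_t^x,g^c(x))$. The supporting fact that $L_t^{x_n-}=L_t^{x_n}$ ensures that cross-iterated-integrals between a linear segment and its neighbouring continuous portion recombine seamlessly to the correct limit across each jump point, and Chen's identity guarantees the consistency of the resulting multiplicative functional.
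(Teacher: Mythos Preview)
Your proposal shares the key insight with the paper's proof: $L_{t,\delta}$ is constant on each inserted segment $[\tau_\delta(x_n-),\tau_\delta(x_n)]$, so the level-$2$ correction $({\bf Z}_\delta^2)_{2,1}$ vanishes there and the level-$1$ contribution collapses to $L_t^{x_n}(g(x_n)-g(x_n-))$. However, the overall organization differs, and your version carries an imprecision that the paper's route avoids.

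The paper does not work with a Riemann-sum $\varepsilon$-$N$ truncation. Instead it decomposes $g=g^c+h$ \emph{at the level of the integrand} and matches each piece separately: it observes that on the complement of \emph{all} inserted segments $g_\delta$ agrees (after change of variable) with $g$, whose increments there coincide with those of $g^c$; while on each segment $g_\delta$ carries exactly the jump increment of $h$. This immediately yields $\int L_{t,\delta}\,dg_\delta=\int L_{t,\delta}\,dg^c_\delta+\int L_{t,\delta}\,dh_\delta$, and each summand is identified with the corresponding term of the definition (\ref{final}).

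Your split into pieces (a), (b), (c) is not cleanly delineated. As written, piece (b) consists of sub-intervals lying in the complement of the $N$ \emph{large} segments, so it still contains the small segments for $n>N$. After change of variable, a sub-interval $[\tau_\delta(z_{i-1}),\tau_\delta(z_i)]$ in (b) may correspond to a $z$-interval containing small jumps of $g$, and then $g(z_i)-g(z_{i-1})\neq g^c(z_i)-g^c(z_{i-1})$; so (b) does not directly approximate $\int L_t^x\,dg^c(x)$ but rather $\int L_t^x\,dg(x)$ with the $N$ large jump contributions removed. Your piece (c) then has to absorb the discrepancy, but ``sub-intervals meeting linear segments with $n>N$'' in the extended variable is not the same object as the small-jump contribution in the original variable, and reconciling the two at the rough-path level (including the cross level-$2$ terms via Chen's identity) is precisely the work you defer to the ``main technical obstacle'' paragraph. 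The paper sidesteps this entirely: by splitting $g$ rather than the partition, no truncation is needed and the identification of the continuous part requires only the observation that the complement of all segments corresponds bijectively to $[x',x'']$ with $g_\delta\circ\tau_\delta=g=g^c$ there. If you wish to keep your approach, the clean fix is to take (b) as sub-intervals outside \emph{all} segments (possible since their total length is finite) and drop the $\varepsilon$-$N$ device altogether.
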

\begin{proof} The right hand side of (\ref{final}) is a rough path as defined in the previous section. As local time is continuous, hence the integral $\int_{x'}^{x''}L_t^xdg^c(x)$ is a rough path can be defined as in the previous section.\,\,For the integral associated with the jump part, we need the method pointed out before the theorem.\,\,At each discontinuous point $x_r$,
\[
\int_{x_r-}^{x_r}L_t^xdg(x) = L_t (x_r)(g(x_r) - g(x_r-)) = L_t (x_r)(h(x_r) - h(x_r-)).\]
By Definition \ref{deftau}, we have that
\[
L_t (x_r)(g(x_r) - g(x_r-)) = L_{t,\delta}(\tau_{\delta}(x_r-))(g_{\delta}(\tau_{\delta}(x_r))-g_{\delta}(\tau_{\delta}(x_r-))).\]
Hence, it follows that
\begin{equation}\label{useforonce}
\sum_{r} L_t (x_r)(g(x_r) - g(x_r-)) = \sum_{r} L_{t,\delta}(\tau_{\delta}(x_r-))(g_{\delta}(\tau_{\delta}(x_r))-g_{\delta}(\tau_{\delta}(x_r-))).
\end{equation}
From Corollary 3.9, we know that the right hand side of (\ref{useforonce}) alone is not well defined, but together with $\sum_{r}(({\bf Z})^2_{1,2})$ it is well defined. In this case, we need to check that
\[
\sum_{r}(({\bf Z}_{\delta})_{\tau_{\delta}(x_r-),\tau_{\delta}(x_r)}^2)_{1,2} = 0\]
in order to have (\ref{useforonce}) to be well defined. From Corollary \ref{roughpathintegraldef} and the continuity of local time, we obtain that
\[
\sum_{r}(({\bf Z}_{\delta})_{\tau_{\delta}(x_r-),\tau_{\delta}(x_r)}^2)_{1,2} = \sum_{r}\int_{\tau_{\delta}(x_r-)}^{\tau_{\delta}(x_r)}(L_{t,\delta}(y)-L_{t,\delta}(\tau_{\delta}(x_r-)))dg_{\delta}(y) = 0,\]
where $Z_{\delta}(y): =(L_{t,\delta}(y), g_{\delta}(y)).$
Therefore, we have
\[
\sum_{r}\int_{x_r-}^{x_r} L_t^xdh(x) = \sum_{r}\int_{\tau_{\delta}(x_r-)}^{\tau_{\delta}(x_r)}L_t^{\delta}(y)dg_{\delta}(y)=\sum_{r}\int_{\tau_{\delta}(x_r-)}^{\tau_{\delta}(x_r)}L_t^{\delta}(y)dh_{\delta}(y).\]

As the continuous part $g^c$ is the same as $g_{\delta}$ on the space interval where $g$ is continuous and $g^c$ does not contribute on the interval where the function jump. Simimlary, where $g$ is jump discontinuous, we denote as $h$, does not contribute on the interval where the function is continuous, hence
\begin{align*}
\int_{x'}^{x''}L_t^xdg(x)&=\int_{x'}^{x''}L_t^xdg^c(x) + \int_{x'}^{x''}L_t^xdh(x)\\
&= \int_{x'}^{\tau_{\delta}(x'')}L_{t,\delta}(y)dg_{\delta}^c(y) + \int_{x'}^{\tau_{\delta}(x'')}L_{t,\delta}(y)dh_{\delta}(y)\\
&=\int_{x'}^{\tau_{\delta}(x'')}L_{t,\delta}(y)dg_{\delta}(y).
\end{align*}
This completes the proof.
\end{proof}
The convergence for discontinuous functions can be proved by applying the method in the above theorem and Proposition \ref{continuitysecondlevel} to $(h-g)(x)$.\,Note the function $h_{\delta}$ is piecewise linear for fixed $\delta>0$.\,It is certainly of bounded q-variation with a control function ${\bf w}_\delta.$ If $h_{\delta j}$ is a sequence of bounded q-variation functions with control function ${\bf w}_{\delta j}$ such that  $h_{\delta j}\to h_{\delta}$ and ${\bf w}_{\delta j}\to{\bf w}_{\delta}$ as $j\to\infty$ uniformly. Then, we have 
\begin{equation*}
\int_{x'}^{\tau_{\delta}(x'')}L_{t,\delta}(y)dh_{\delta j}(y)\to \int_{x'}^{\tau_{\delta}(x'')}L_{t,\delta}(y)dh_{\delta}(y)
\end{equation*}
as $j\to\infty$.
Hence, we have the following proposition.
\begin{proposition}\label{propadded}
Let $\alpha\in(\frac{5}{3},2)$, $\frac{2}{3-\alpha}\leq q<3$, $\theta$ be chosen such that $\frac{4}{2h+\alpha-1}<\theta<3$.
\,Consider h, the jump part of the function g and assume there is a sequence of continuous functions $h_j\to h$ as $j\to\infty$. 
Let $h_{j\delta}$ and $h_\delta$ be defined in the same way as $G_\delta(y)$.\,\,Let $h_{\delta j}$ be a sequence of continuous function satisfying $h_{\delta j}\to h_\delta$ as $j\to\infty$ together with their control functions and 
\begin{equation}\label{three star}
 \int_{-\infty}^{\infty}L_t^\delta(y) dh_{j\delta}(y)=\int_{-\infty}^{\infty}L_t^\delta(y) dh_{\delta j}(y).
\end{equation}
Then as $j\to\infty$
\begin{equation*}
 \int_{-\infty}^{\infty}L_t^x dh_{j}(x)\to\int_{-\infty}^{\infty}L_t^x dh(x).
\end{equation*}
\end{proposition}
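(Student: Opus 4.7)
The plan is to use the $\tau_\delta$-extension of Definition \ref{deftau} as a bridge between the discontinuous target $h$ and its continuous approximants $h_j$, thereby reducing everything to the continuous convergence result already proved in Proposition \ref{continuitysecondlevel}. Concretely, I would chain the identities
\[
\int_{-\infty}^{\infty} L_t^x\, dh_j(x) \;=\; \int_{-\infty}^{\infty} L_{t,\delta}(y)\, dh_{j\delta}(y) \;=\; \int_{-\infty}^{\infty} L_{t,\delta}(y)\, dh_{\delta j}(y)
\]
and then let $j \to \infty$ in the rightmost expression, identifying the limit with $\int L_t^x\, dh(x)$.

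The first equality would be justified as follows: each $h_j$ is continuous, so when it is pushed through the $\tau_\delta$-construction with respect to the jump set of $h$, the inserted segment $[\tau_\delta(x_n-),\tau_\delta(x_n))$ carries $h_{j\delta}$ as a constant (the relevant linear formula has jump size $h_j(x_n)-h_j(x_n-)=0$), while $L_{t,\delta}$ is also constant there by continuity of $L_t$ in $x$. Hence the inserted intervals contribute nothing and the two integrals coincide. The second equality is exactly the hypothesis (\ref{three star}). For the right-hand end of the chain, Theorem \ref{continuousthmsecondlevel} applied to $h$ gives $\int L_{t,\delta}(y)\, dh_\delta(y) = \int L_t^x\, dh(x)$, which will be the target of the limit.

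The substantive step is the convergence
\[
\int_{-\infty}^{\infty} L_{t,\delta}(y)\, dh_{\delta j}(y) \longrightarrow \int_{-\infty}^{\infty} L_{t,\delta}(y)\, dh_\delta(y) \quad \text{as } j \to \infty.
\]
Since $h_{\delta j}$ and $h_\delta$ are, by construction, continuous paths of bounded $q$-variation on the enlarged interval $[x',\tau_\delta(x'')]$, with $h_{\delta j} \to h_\delta$ uniformly and control functions ${\bf w}_{\delta j} \to {\bf w}_\delta$ uniformly by hypothesis, this is precisely the type of convergence Proposition \ref{continuitysecondlevel} produces, provided its hypotheses transfer to the extended interval. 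I expect the main obstacle to be verifying this transfer: one needs to check that $L_{t,\delta}$ still obeys the moment estimate (\ref{eq: 4.1}) on $[x',\tau_\delta(x'')]$, which should be immediate since $L_{t,\delta}$ equals $L_t^x$ after the reparametrisation $y = \tau_\delta(x)$ and is constant on each bridging segment, and that the pair $(L_{t,\delta}, h_{\delta j})$ admits the same geometric rough-path structure so that Theorem \ref{them 3.4.4} and Corollary \ref{roughpathintegraldef} remain valid on the enlarged interval. Once this transfer is in place, the three identities above combine with the convergence to yield $\int L_t^x\, dh_j(x) \to \int L_t^x\, dh(x)$ as claimed.
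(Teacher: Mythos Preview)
Your proposal is correct and follows essentially the same approach as the paper: reduce to the extended interval via the $\tau_\delta$-construction, invoke hypothesis (\ref{three star}), apply Proposition \ref{continuitysecondlevel} to pass to the limit, and identify the result via Theorem \ref{continuousthmsecondlevel}. The only cosmetic difference is that the paper routes the first identity through an integration-by-parts detour, writing $\int h_j(x)\,dL_t^x = \int h_{j\delta}(y)\,dL_t^\delta(y) = -\int L_t^\delta(y)\,dh_{j\delta}(y)$, whereas you argue it directly from the fact that the inserted segments contribute nothing when $h_j$ is continuous; your route is slightly cleaner but the content is the same.
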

 
\begin{proof}
By Theorem \ref{continuousthmsecondlevel}, integration by parts formula and assumption (3.47), one can see that 
\begin{eqnarray*}\nn
&&
\int_{-\infty}^{\infty} h_j(x)dL_t^x=\int_{-\infty}^{\infty} h_{j\delta}(y)dL_t^\delta(y)\nn\\
&&
\,\,\,\,\,\,\quad\,\qquad\qquad\,\,= -\int_{-\infty}^{\infty} L_t^\delta(y)dh_{j\delta}(y)\nn\\
&&
\,\,\,\,\,\,\,\quad\qquad\qquad\,\,= -\int_{-\infty}^{\infty} L_t^\delta(y)dh_{\delta j}(y).\nn\\
\end{eqnarray*}
By the assumption that $h_{\delta j}\to h_\delta$ as $j\to\infty$ together with their control functions, then by Proposition \ref{continuitysecondlevel} we have
$\int_{-\infty}^{\infty}L_t^\delta(y) dh_{\delta j}(y)\to\int_{-\infty}^{\infty}L_t^\delta(y) dh_{\delta}(y).$ By using Theorem 3.12,\,we have that $\int_{-\infty}^{\infty}L_t^\delta(y) dh_{\delta}(y)=\int_{-\infty}^{\infty}L_t^x dh(x).$ Hence, the result of the proposition follows.
\end{proof}

\begin{corollary}
Let $\alpha\in(\frac{5}{3},2)$, $\frac{2}{3-\alpha}\leq q<3$, $\theta$ be chosen such that $\frac{4}{2h+\alpha-1}<\theta<3$.
\,Moreover, let $Z_j(x): = (L_t^x, g_j(x)), Z(x):=(L_t^x, g(x))$, where $g_j(.), g(.)$ are both of bounded q-variation, and $g_j$ is continuous and $g$ is c\`adl\`ag with decomposition $g=g_c+h$, where $g_c$ is the continuous part of $g$ and $h$ is the jump part of $g$. Suppose $g_j=g_{cj}+h_j$ with control function ${\bf w}_{cj}$ and ${\bf w}_{hj}$ such that $g_{cj}\to g_c$ and  ${\bf w}_{cj}\to{\bf w}_{c}$ uniformly, $h_j$ satisfying conditions in Proposition \ref{propadded}.\,\,Then we have 
\begin{equation*}
\int_{-\infty}^{\infty} g_j(x)dL_t^x\to\int_{-\infty}^{\infty} g(x)dL_t^x \quad a.s.\quad as\quad j\to\infty.
\end{equation*}
\end{corollary}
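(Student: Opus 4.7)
The plan is to split both $g_j$ and $g$ into their continuous and jump components and apply the two convergence results already proved, namely Proposition \ref{continuitysecondlevel} to the continuous parts and Proposition \ref{propadded} to the jump parts, and then add the limits. By linearity of the rough-path integral with respect to the integrand (which itself follows from the definition of $\int g \, d_x L_t^x$ via Corollary \ref{roughpathintegraldef} and the bilinearity of the iterated integrals making up the second-level path $\mathbf{Z}^2$), one has
\begin{equation*}
\int_{-\infty}^{\infty} g_j(x)\, d_x L_t^x
= \int_{-\infty}^{\infty} g_{cj}(x)\, d_x L_t^x + \int_{-\infty}^{\infty} h_j(x)\, d_x L_t^x,
\end{equation*}
and similarly for $g = g_c + h$. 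It therefore suffices to establish the two limits
\begin{equation*}
\int_{-\infty}^{\infty} g_{cj}(x)\, d_x L_t^x \longrightarrow \int_{-\infty}^{\infty} g_c(x)\, d_x L_t^x,
\qquad
\int_{-\infty}^{\infty} h_j(x)\, d_x L_t^x \longrightarrow \int_{-\infty}^{\infty} h(x)\, d_x L_t^x,
\end{equation*}
as $j\to\infty$, almost surely.

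First I would handle the continuous part. Since $g_{cj}$ and $g_c$ are continuous and of bounded $q$-variation with $\frac{2}{3-\alpha}\le q<3$, and since by hypothesis $g_{cj}\to g_c$ uniformly together with the uniform convergence of the associated control functions ${\bf w}_{cj}\to{\bf w}_c$, Proposition \ref{continuitysecondlevel} applies directly to the pair of vector-valued paths $Z_{cj}(x)=(L_t^x, g_{cj}(x))$ and $Z_c(x)=(L_t^x,g_c(x))$. It yields a.s.\ convergence of the associated geometric rough paths in the $\theta$-variation topology and, in particular, convergence of the rough-path integrals $\int g_{cj}(x)\, d_x L_t^x \to \int g_c(x)\, d_x L_t^x$ a.s.

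Second, for the jump parts, the hypothesis is exactly that $h_j$ satisfies the conditions of Proposition \ref{propadded}, which asserts $\int h_j(x)\, d_x L_t^x \to \int h(x)\, d_x L_t^x$ a.s.\ as $j\to\infty$. Adding the two limits along a common a.s.\ event and invoking the decomposition above finishes the proof.

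The step I expect to carry the most subtlety is the verification that the additive decomposition of the rough-path integral really is valid, i.e.\ that
$\int (g_{cj}+h_j)\, d_xL_t^x = \int g_{cj}\,d_xL_t^x + \int h_j\,d_xL_t^x$
and similarly for the limit integral. For $g_j$ this is built into Theorem \ref{continuousthmsecondlevel}: the c\`adl\`ag path is lifted through the map $\tau_\delta$ to a continuous path with control function controlling both the continuous and jump pieces, and the rough-path construction is bilinear in the increments. The remaining bookkeeping is routine: one picks a common exceptional null set on which both Proposition \ref{continuitysecondlevel} and Proposition \ref{propadded} give convergence, and the triangle inequality in the $\theta$-variation metric delivers the conclusion.
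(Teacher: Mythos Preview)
Your proposal is correct and follows essentially the same route as the paper: split $g_j$ and $g$ into continuous and jump parts, apply Proposition~\ref{continuitysecondlevel} to the continuous parts and Proposition~\ref{propadded} to the jump parts, and add. The paper's proof is a single sentence (``follows from Theorem~\ref{continuousthmsecondlevel} and Proposition~\ref{propadded}''), so you have supplied more detail than the authors; in particular, your remark that the additive decomposition of the rough-path integral is the point needing care, and that it is underwritten by Theorem~\ref{continuousthmsecondlevel} and the bilinearity in Corollary~\ref{roughpathintegraldef}, is exactly right and matches how the paper implicitly uses Theorem~\ref{continuousthmsecondlevel} here.
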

\begin{proof}
The Corollary follows from Theorem \ref{continuousthmsecondlevel} and Proposition \ref{propadded}.
\end{proof}

\subsection{Third level rough path}
The $\theta$-variation formula for third level path in \cite{Lyons2002} is given as 

\begin{eqnarray}\nn
&&
\,\quad\displaystyle\sup_{D}\bigl|{\bf Z}(m+1)_{x_{l-1},x_l}^3-{\bf Z}(m)_{x_{l-1},x_l}^3\bigr|^{\frac{\theta}{3}}\nn\\
&&
\leq C_2\sum_{n=1}^{\infty}n^\gamma\sum_{k=1}^{2^n}\bigl|{\bf Z}(m+1)_{x_{k-1}^n,x_k^n}^3-{\bf Z}(m)_{x_{k-1}^n,x_k^n}^3\bigr|^{\frac{\theta}{3}}\nn\\
&&
\quad + C_3\biggl(\sum_{n=1}^{\infty}n^\gamma\sum_{k=1}^{2^n}\bigl|{\bf Z}(m+1)_{x_{k-1}^n,x_k^n}^1-{\bf Z}(m)_{x_{k-1}^n,x_k^n}^1\bigr|^{{\theta}}\biggr)^{\frac{1}{3}}\nn\\
&&
\quad\quad\quad \times\biggl(\sum_{n=1}^{\infty}n^\gamma\sum_{k=1}^{2^n}\bigl|{\bf Z}(m+1)_{x_{k-1}^n,x_k^n}^2\bigr|^{\frac{\theta}{2}}+\bigl|{\bf Z}(m)_{x_{k-1}^n,x_k^n}^2\bigr|^{\frac{\theta}{2}}\biggr)^{\frac{2}{3}}\nn\\
&&
\quad+ C_4\biggl(\sum_{n=1}^{\infty}n^\gamma\sum_{k=1}^{2^n}\bigl|{\bf Z}(m+1)_{x_{k-1}^n,x_k^n}^2-{\bf Z}(m)_{x_{k-1}^n,x_k^n}^2\bigr|^{\frac{\theta}{2}}\biggr)^{\frac{2}{3}}\nn\\
&&
\quad\quad\quad \times\biggl(\sum_{n=1}^{\infty}n^\gamma\sum_{k=1}^{2^n}\bigl|{\bf Z}(m+1)_{x_{k-1}^n,x_k^n}^1\bigr|^{{\theta}}+\bigl|{\bf Z}(m)_{x_{k-1}^n,x_k^n}^1\bigr|^{{\theta}}\biggr)^{\frac{1}{3}}\nn\\
&&
\quad +C_5\biggl(\sum_{n=1}^{\infty}n^\gamma\sum_{k=1}^{2^n}\bigl|{\bf Z}(m+1)_{x_{k-1}^n,x_k^n}^1-{\bf Z}(m)_{x_{k-1}^n,x_k^n}^1\bigr|^{\theta}\biggr)^{\frac{1}{3}}\nn\\
&&
\quad\quad\quad \times\biggl(\sum_{n=1}^{\infty}n^\gamma\sum_{k=1}^{2^n}\bigl|{\bf Z}(m+1)_{x_{k-1}^n,x_k^n}^1\bigr|^{{\theta}}+\bigl|{\bf Z}(m)_{x_{k-1}^n,x_k^n}^1\bigr|^{{\theta}}\biggr)^{\frac{2}{3}},
\end{eqnarray}
where $\{x_k^n\}$ satisfing (\ref{usedlater}).

We have obtained estimations for the first and second level paths.\,As there is a connection between the sample path of local time of symmetric stable processes and its associated Gaussian processes by Dynkin isomorphism theorem (cf.\,\cite{Marcusr92}), therefore, we present some relevant results for the Gaussian processes first.\,The importance of the following result regarding Gaussian random variables will be made clear throughout the estimation of the $\theta$-variation on the third level path.
 
Again, we have
\begin{equation}
|g(b)-g(a)|^2\leq{\textbf{w}}_1(a,b)^{2h}
\end{equation}
for any $(a,b)\in\Delta$ and $h\theta>1.$
 
We consider the cross product term on the increments  of the $g$ with parameter h. By a purely algebraic procedure as in (\ref{eq: 4.9}), for $b>a$, $\varrho>0,$ we have
\begin{eqnarray}\label{consumed}&&
\biggl(g(b)-g(a)\biggr)\biggl(g(b+\varrho)-g(a+\varrho)\biggr)\leq \frac{1}{2}\biggl(\textbf{w}_1(b+\varrho, a)^{2h} -  {\textbf{w}_1(b+\varrho, b)}^{2h}\nn\\
&&
\qquad\qquad\qquad\qquad\qquad\qquad\qquad\quad\qquad\qquad+ \textbf{w}_1(a+\varrho, b)^{2h} - \textbf{w}_1(a+\varrho, a)^{2h}\biggr).
\end{eqnarray}
 
All the estimations of the variance or covariance of the local time one encounter later in this paper are similar to one of the following formats.
For $n>m$, the proof on the convergence of the third level path is a straightforward exercise as pointed out in the proof of the Proposition 4.5.1 in \cite{Lyons2002}.
\,\,For $m\geq n,\, k=1,2,\dots,2^n,\, 2^{m-n}(k-1)+1\leq r<l\leq 2^{m-n}k$, the estimations of the variance of local time on different intervals are given by
\begin{equation}
E\bigl(L_t^{x_{2l-2}^{m+1}} - L_t^{x_{k-1}^{n}}\bigr)^2
\leq
c \bigl(\frac{1}{2^n}\bigr)^{\alpha-1}
\end{equation}
\qquad\qquad\qquad\qquad\qquad\qquad\qquad\qquad\qquad\qquad\qquad\qquad  and
\begin{equation}
E\bigl(L_t^{x_{2l-1}^{m+1}} - L_t^{x_{2l-2}^{m+1}}\bigr)^2 \leq c \bigl(\frac{2l-1}{2^{m+1}}-\frac{2l-2}{2^{m+1}}\bigr)^{\alpha-1}\leq c \bigl(\frac{1}{2^m}\bigr)^{\alpha-1}
\end{equation}
where c is a generic constant. The covariance of the local time on non-overlapping intervals $[x_{k-1}^n, x_{2l-2}^{m+1}]$, $[x_{2l-2}^{m+1}, x_{2l-1}^{m+1}]$ satisfies
\begin{eqnarray}\nn\label{another ineq}
&&
\quad E\bigl(L_t^{x_{2l-2}^{m+1}} - L_t^{x_{k-1}^{n}}\bigr)\bigl(L_t^{x_{2l-1}^{m+1}}-L_t^{x_{2l-2}^{m+1}}\bigr)\nn\\
&&
=\frac{1}{2}\biggl[-\sigma^2(x_{2l-2}^{m+1} - x_{k-1}^{n}) + \sigma^2(x_{2l-1}^{m+1} - x_{k-1}^{n}) - \sigma^2(x_{2l-1}^{m+1} - x_{2l-2}^{m+1}) \biggr]\nn\\
&&
\leq 0.
\end{eqnarray}
Here we have used (\ref {eq: 4.9}), the concavity of $\sigma^2$ and the following two observations for a non-negative concave function $f$:
\begin{align*}
f(tx)&\geq tf(x)\,\,\quad\text{for $t\in [0,1]$},\\
f(a)+f(b)&\geq f(a+b).
\end{align*}

On the other hand, by the increasing property of $\sigma^2$ and the first part of (\ref{another ineq})
\begin{equation}
E\bigl(L_t^{x_{2l-2}^{m+1}} - L_t^{x_{k-1}^{n}}\bigr)\bigl(L_t^{x_{2l-1}^{m+1}}-L_t^{x_{2l-2}^{m+1}}\bigr)\geq- \frac{1}{2}\sigma^2(x_{2l-1}^{m+1} - x_{2l-2}^{m+1})\geq - c \bigl(\frac{1}{2^{m+1}}\bigr)^{\alpha-1},
\end{equation}
where c is a generic constant.\,\,The covariance of local time on two overlapping intervals $[x_{k-1}^{n}, x_{2r-2}^{m+1}]$, $[x_{k-1}^{n}, x_{2l-2}^{m+1}]$ is given by (without loss of generality, assuming $l>r$)
\begin{eqnarray}\nn
&&
\quad E(L_t^{x_{2l-2}^{m+1}} - L_t^{x_{k-1}^{n}})(L_t^{x_{2r-2}^{m+1}} - L_t^{x_{k-1}^{n}})\nn\\
&&
=E(L_t^{x_{2l-2}^{m+1}} -L_t^{x_{2r-2}^{m+1}})(L_t^{x_{2r-2}^{m+1}} - L_t^{x_{k-1}^{n}}) + E(L_t^{x_{2r-2}^{m+1}} - L_t^{x_{k-1}^{n}})(L_t^{x_{2r-2}^{m+1}} - L_t^{x_{k-1}^{n}})\nn\\
&&
=\frac{1}{2}\bigl[-\sigma^2(x_{2r-2}^{m+1}-x_{k-1}^{n}) +\sigma^2(x_{2l-2}^{m+1}-x_{k-1}^{n})\nn\\
&&
\quad\quad\qquad\quad-\sigma^2(x_{2l-2}^{m+1}-x_{2r-2}^{m+1})\bigr]+\sigma^2(x_{2r-2}^{m+1}-x_{k-1}^{n})\nn\\
&&
= \frac{1}{2}\bigl[\sigma^2(x_{2l-2}^{m+1}-x_{k-1}^{n})-\sigma^2(x_{2l-2}^{m+1}-x_{2r-2}^{m+1})+\sigma^2(x_{2r-2}^{m+1}-x_{k-1}^{n})\bigr].
\end{eqnarray}

One can conclude the above is non-negative based on the non-negativity and monotonically increasing property of $\sigma^2$. Moreover, it is bounded from above as
\begin{equation}
E(L_t^{x_{2l-2}^{m+1}} - L_t^{x_{k-1}^{n}})(L_t^{x_{2r-2}^{m+1}} - L_t^{x_{k-1}^{n}})
\leq 
\sigma^2(x_{2r-2}^{m+1}-x_{k-1}^{n})\leq c\bigl(\frac{1}{2^n}\bigr)^{\alpha-1}.
\end{equation}
Similarly, one can estimate the following term
\begin{eqnarray}
&&
\quad E(L_t^{x_{2l-2}^{m+1}} - L_t^{x_{k-1}^{n}})(L_t^{x_{2r-1}^{m+1}} - L_t^{x_{2r-2}^{m+1}} )\nn\\
&&
= E(L_t^{x_{2l-2}^{m+1}} -L_t^{x_{2r-1}^{m+1}})(L_t^{x_{2r-1}^{m+1}} - L_t^{x_{2r-2}^{m+1}})+E(L_t^{x_{2r-1}^{m+1}} - L_t^{x_{2r-2}^{m+1}})^2\nn\\
&&
\quad\quad+ E( L_t^{x_{2r-2}^{m+1}}- L_t^{x_{k-1}^{n}})(L_t^{x_{2r-1}^{m+1}} - L_t^{x_{2r-2}^{m+1}})\nn\\
&&
=\frac{1}{2}\bigl[-\sigma^2(x_{2r-1}^{m+1}-x_{2r-2}^{m+1})+\sigma^2(x_{2l-2}^{m+1}-x_{2r-2}^{m+1})-\sigma^2(x_{2l-2}^{m+1}-x_{2r-1}^{m+1})\nn\\
&&
\quad\quad\quad\quad+2\sigma^2(x_{2r-1}^{m+1}-x_{2r-2}^{m+1})-\sigma^2(x_{2r-2}^{m+1}-x_{k-1}^{n})\nn\\
&&
\quad\quad\quad\quad+\sigma^2(x_{2r-1}^{m+1}-x_{k-1}^{n})-\sigma^2(x_{2r-1}^{m+1}-x_{2r-2}^{m+1})\bigr]\nn\\
&&
=\frac{1}{2}\bigl[ \sigma^2(x_{2l-2}^{m+1}-x_{2r-2}^{m+1})-\sigma^2(x_{2l-2}^{m+1}-x_{2r-1}^{m+1})-\sigma^2(x_{2r-2}^{m+1}-x_{k-1}^{n})\nn\\
&&
\quad\quad\quad\quad+\sigma^2(x_{2r-1}^{m+1}-x_{k-1}^{n}) \bigr].
\end{eqnarray}

The above quantity is nonnegative by monotonically increasing property of $\sigma^2.$ Moreover, it can be shown that it is bounded from above 
\begin{eqnarray}\nn
&&
\quad E(L_t^{x_{2l-2}^{m+1}} - L_t^{x_{k-1}^{n}})(L_t^{x_{2r-1}^{m+1}} - L_t^{x_{2r-2}^{m+1}} )\nn\\
&&
=\frac{1}{2}\bigl[ \sigma^2(x_{2l-2}^{m+1}-x_{2r-2}^{m+1})-\sigma^2(x_{2l-2}^{m+1}-x_{2r-1}^{m+1})-\sigma^2(x_{2r-2}^{m+1}-x_{k-1}^{n})\nn\\
&&
\quad\quad\quad\quad+\sigma^2(x_{2r-1}^{m+1}-x_{k-1}^{n}) \bigr]\nn\\
&&
\leq
\sigma^2(x_{2r-1}^{m+1}-x_{2r-2}^{m+1})\nn\\
&&
\leq c\bigl(\frac{1}{2^m}\bigr)^{\alpha-1},
\end{eqnarray}
where c is a generic constant.  
 
For the case when $n> m$, we refer to (\ref {eq: 4.6}). Similar to Proposition \ref{SecondLevelVariationProposition}, we can prove the following proposition.
\begin{proposition}\label{ThirdLevelVariationProposition}
For a continuous path $Z_x$ which satisfies (\ref{eq: 4.2}) with $h\theta>1$, then for $n> m$
\begin{equation}\label{eq: 4.11}
\sum_{k=1}^{2^n}E\biggl|{\bf{Z}}(m+1)_{x_{k-1}^n, x_k^n}^3-{\bf{Z}}(m)_{x_{k-1}^n, x_k^n}^3\biggr|^{\frac{\theta}{3}}\leq C\biggl(\frac{1}{2^{n+m}}\biggr)^{\frac{h\theta-1}{2}},
\end{equation}
\end{proposition}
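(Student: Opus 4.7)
The plan is to imitate the proof of Proposition \ref{SecondLevelVariationProposition} with $N=3$ replacing $N=2$. Since $n>m$ forces $n\ge m+1$, both ${\bf Z}(m)^3_{x_{k-1}^n,x_k^n}$ and ${\bf Z}(m+1)^3_{x_{k-1}^n,x_k^n}$ are given by the explicit formula (\ref{eq: 4.6}) with $N=3$: namely $\tfrac{1}{6}2^{3(m-n)}(\Delta_l^m Z)^{\otimes 3}$ (and the analogous expression at level $m+1$), where $l$ is the unique parent cell at level $m$ containing $[x_{k-1}^n,x_k^n]$. In particular, no delicate covariance estimates for the local time are required here; the moment bound (\ref{eq: 4.2}) alone will suffice, because the tensor-power structure collapses the estimate onto a single increment.

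The first step is to group the $2^n$ terms by their level-$(m+1)$ parent. Each of the $2^{m+1}$ cells at level $m+1$ contains exactly $2^{n-m-1}$ level-$n$ subintervals, all producing the same summand, so
$$
\sum_{k=1}^{2^n}E\bigl|{\bf Z}(m+1)_{x_{k-1}^n,x_k^n}^3-{\bf Z}(m)_{x_{k-1}^n,x_k^n}^3\bigr|^{\theta/3}
= \sum_{l'=1}^{2^{m+1}} 2^{n-m-1}\, E\bigl|\tfrac{1}{6}2^{3(m+1-n)}(\Delta_{l'}^{m+1}Z)^{\otimes 3}-\tfrac{1}{6}2^{3(m-n)}(\Delta_{l}^{m}Z)^{\otimes 3}\bigr|^{\theta/3}.
$$

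Next, I would apply $|a-b|^{\theta/3}\le C(|a|^{\theta/3}+|b|^{\theta/3})$ to split the summand and invoke (\ref{eq: 4.2}) together with $\textbf{w}_1(x_{l-1}^m,x_l^m)\le 2^{-m}\textbf{w}_1(x',x'')$ to bound each piece by $C\,2^{\theta(m-n)}(1/2^m)^{h\theta}\textbf{w}_1(x',x'')^{h\theta}$. Summing over $l'$ contributes the factor $2^{m+1}\cdot 2^{n-m-1}=2^n$, and after collecting exponents one reaches the intermediate estimate $C(2^m/2^n)^{\theta-h\theta}(1/2^n)^{h\theta-1}$.

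The endgame is word-for-word the one used at the end of Proposition \ref{SecondLevelVariationProposition}: since $h\le 1$ and $n>m$, the auxiliary factor $(2^m/2^n)^{\theta-h\theta}$ is bounded by $1$; and since $h\theta>1$, the inequality $n\ge m$ permits the splitting $(1/2^n)^{h\theta-1}\le (1/2^n)^{(h\theta-1)/2}(1/2^m)^{(h\theta-1)/2}=(1/2^{n+m})^{(h\theta-1)/2}$. Combining these yields the claimed bound. I do not anticipate any real obstacle: the only care required is verifying the signs of the exponents $\theta-h\theta$ and $h\theta-1$ so that the two power-law comparisons go in the correct direction, which is immediate from the standing hypothesis $h\theta>1$ together with $h=1/\hat q\le 1$.
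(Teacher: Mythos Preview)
Your proposal is correct and follows essentially the same route as the paper's proof: both invoke the explicit tensor-power formula (\ref{eq: 4.6}) with $N=3$, regroup the sum by level-$(m+1)$ parent cells, apply the moment bound (\ref{eq: 4.2}), and then split the resulting power $(1/2^n)^{h\theta-1}$ using $n\ge m$ to reach $(1/2^{n+m})^{(h\theta-1)/2}$. Your explicit check that $h=1/\hat q\le 1$ so $\theta-h\theta\ge 0$ is a detail the paper leaves implicit but uses in exactly the same way.
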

\noindent where $C$ is a generic constant depends on $\theta, h,$ ${\textbf{w}}_1(x',x'')$ and $c$ in (\ref{eq: 4.2}).
\begin{proof} If $n> m$, by (\ref{eq: 4.6}), we show that
\begin{eqnarray*}\nn
&&
\,\quad\sum_{k=1}^{2^n}E\biggl|{\bf{Z}}(m+1)_{x_{k-1}^n, x_k^n}^3-{\bf{Z}}(m)_{x_{k-1}^n, x_k^n}^3\biggr|^{\frac{\theta}{3}}\nn\\
&&
=\sum_{l=1}^{2^{m+1}}\displaystyle\sum_{x_{l-1}^{m+1}\leq x_{k-1}^n<x_l^{m+1}}E\biggl|\frac{1}{3!}2^{3(m+1-n)}\bigl(\Delta_l^{m+1}Z\bigr)^{\otimes 3}-\frac{1}{3!}2^{3(m-n)}\bigl(\Delta_l^mZ\bigr)^{\otimes 3}\biggr|^{\frac{\theta}{3}}\nn\\
&&
=\sum_{l=1}^{2^{m+1}}2^{n-m-1}E\biggl|\frac{1}{3!}2^{3(m+1-n)}\bigl(\Delta_l^{m+1}Z\bigr)^{\otimes 3}-\frac{1}{3!}2^{3(m-n)}\bigl(\Delta_l^mZ\bigr)^{\otimes 3}\biggr|^{\frac{\theta}{3}}\nn\\
&&
\leq C\biggl(\frac{2^m}{2^n}\biggr)^{\theta} \sum_{l=1}^{2^{m+1}}2^{n-m-1}\biggl(\frac{1}{2^m}\biggr)^{h\theta}\textbf{w}_1(x',x'')^{h\theta}\nn\\
\end{eqnarray*}
\begin{eqnarray}\nn
&&
\leq C\biggl(\frac{2^m}{2^n}\biggr)^{\theta-h\theta}  \biggl(\frac{1}{2^n}\biggr)^{h\theta-1}\nn\\
&&
\leq C\biggl(\frac{2^m}{2^n}\biggr)^{\theta-h\theta}  \biggl(\frac{1}{2^n}\biggr)^{\frac{h\theta-1}{2}}\biggl(\frac{1}{2^m}\biggr)^{\frac{h\theta-1}{2}}\nn\\
&&
\leq C\biggl(\frac{1}{2^{m+n}}\biggr)^{\frac{h\theta-1}{2}},
\end{eqnarray}
where $C$ is a generic constant depends on $\theta, h, \textbf{w}_1(x',x'')$ and $c$ in (\ref{eq: 4.2}).
\end{proof}
However, to estimate the left-hand side of (\ref {eq: 4.11}) is more complicated when $m\geq n$. Recall the following formula in \cite{Lyons2002}
\begin{eqnarray}\nn\label{eq: 4.12}
&&
\quad {\bf Z}(m+1)_{x_{k-1}^n,x_k^n}^3-{\bf Z}(m)_{x_{k-1}^n,x_k^n}^3\nn\\
&&
= \frac{1}{2}\displaystyle\sum_{l}\bigl(Z_{x_{2l-2}^{m+1}} - Z_{x_{k-1}^{n}}\bigr)\otimes\bigl(\Delta_{2l-1}^{m+1}Z\otimes\Delta_{2l}^{m+1}Z-\Delta_{2l}^{m+1}Z\otimes\Delta_{2l-1}^{m+1}Z\bigr)\nn\\
&&
\quad+\frac{1}{2}\displaystyle\sum_{l}\bigl(\Delta_{2l-1}^{m+1}Z\otimes\Delta_{2l}^{m+1}Z-\Delta_{2l}^{m+1}Z\otimes\Delta_{2l-1}^{m+1}Z\bigr)\otimes\bigl( Z_{x_{k}^{n}}-Z_{x_{2l+2}^{m+1}}\bigr)\nn\\
&&
\quad+\frac{1}{3}\displaystyle\sum_{l}\Delta_{2l-1}^{m+1}Z\bigl(\Delta_{2l}^{m+1}Z\otimes\Delta_{2l}^{m+1}Z+\Delta_{2l-1}^{m+1}Z\otimes\Delta_{2l}^{m+1}Z\bigr)\nn\\
&&
\quad-\frac{1}{6}\displaystyle\sum_{l}\Delta_{2l}^{m+1}Z\bigl(\Delta_{2l}^{m+1}Z\otimes\Delta_{2l-1}^{m+1}Z+\Delta_{2l-1}^{m+1}Z\otimes\Delta_{2l}^{m+1}Z\bigr)\nn\\
&&
\quad-\frac{1}{6}\displaystyle\sum_{l}\bigl(\Delta_{2l}^{m+1}Z\otimes\Delta_{2l-1}^{m+1}Z+\Delta_{2l-1}^{m+1}Z\otimes\Delta_{2l}^{m+1}Z\bigr)\otimes\Delta_{2l-1}^{m+1}Z\nn\\
&&
: =A_1+A_2+A_3+A_4+A_5,
\end{eqnarray}
where the sum runs over $2^{m-n}(k-1)+1\leq l<2^{m-n}k.$
As suggested by Jensen's inequality, we have
\begin{equation}\label{proposition1}
E\bigl|{\bf{Z}}(m+1)_{x_{k-1}^n, x_k^n}^3-{\bf{Z}}(m)_{x_{k-1}^n, x_k^n}^3\bigr|^{\frac{\theta}{3}}\leq \biggl(E\bigl|{\bf{Z}}(m+1)_{x_{k-1}^n, x_k^n}^3-{\bf{Z}}(m)_{x_{k-1}^n, x_k^n}^3\bigr|^2\biggr)^{\frac{\theta}{6}}.
\end{equation}
In order to estimate (\ref{proposition1}), we first use (\ref {eq: 4.12}) to estimate
\begin{equation*}
E\bigl|{\bf{Z}}(m+1)_{x_{k-1}^n, x_k^n}^3-{\bf{Z}}(m)_{x_{k-1}^n, x_k^n}^3\bigr|^2.
\end{equation*}
We will only estimate the term $A_1$ and $A_3$, as other terms can be estimated similarly.  For $m\geq n$, we first estimate the term $A_1^2$. Define $\varphi_l =Z_{x_{2l-2}^{m+1}} - Z_{x_{k-1}^{n}}$ and $\varphi_l^i =  Z_{x_{2l-2}^{m+1}}^i - Z_{x_{k-1}^{n}}^i$, then
\begin{eqnarray*}
&&
A_1^2=\biggl|\displaystyle\sum_{l}\bigl(Z_{x_{2l-2}^{m+1}} - Z_{x_{k-1}^{n}}\bigr)\otimes\bigl(\Delta_{2l-1}^{m+1}Z\otimes\Delta_{2l}^{m+1}Z-\Delta_{2l}^{m+1}Z\otimes\Delta_{2l-1}^{m+1}Z\bigr)\biggr|^2\nn\\
&&
\quad\,\,\,=\displaystyle\sum_{i,j,u=1,i\neq j,u}^{2}\biggl(\displaystyle\sum_{l}\varphi_l^u\bigl(\Delta_{2l-1}^{m+1}Z^i\Delta_{2l}^{m+1}Z^j-\Delta_{2l}^{m+1}Z^i\Delta_{2l-1}^{m+1}Z^j\bigr)\biggr)^2\nn\\
&&
\quad\,\,\,=\displaystyle\sum_{i\neq j,u}\displaystyle\sum_{l}(\varphi_l^u)^2\bigl(\Delta_{2l-1}^{m+1}Z^i\Delta_{2l}^{m+1}Z^j-\Delta_{2l}^{m+1}Z^i\Delta_{2l-1}^{m+1}Z^j\bigr)^2\nn\\
\end{eqnarray*}
\begin{eqnarray}\nn
&&
\quad\,\,\,\quad+2\displaystyle\sum_{i\neq j,u}\displaystyle\sum_{r<l}\biggl(\varphi_l^u\varphi_r^u{\Delta_{2l-1}^{m+1}}Z^i{\Delta_{2r-1}^{m+1}}Z^i{\Delta_{2r}^{m+1}}Z^j{\Delta_{2l}^{m+1}}Z^j\nn\\
&&
\quad\qquad\qquad\quad-\varphi_l^u\varphi_r^u{\Delta_{2l}^{m+1}}Z^i{\Delta_{2r-1}^{m+1}}Z^i{\Delta_{2r}^{m+1}}Z^j{\Delta_{2l-1}^{m+1}}Z^j\nn\\
&&
\quad\qquad\qquad\quad-\varphi_l^u\varphi_r^u{\Delta_{2l-1}^{m+1}}Z^i{\Delta_{2r}^{m+1}}Z^i{\Delta_{2r-1}^{m+1}}Z^j{\Delta_{2l}^{m+1}}Z^j\nn\\
&&
\quad\qquad\qquad\quad+\varphi_l^u\varphi_r^u{\Delta_{2r}^{m+1}}Z^i{\Delta_{2l}^{m+1}}Z^i{\Delta_{2l-1}^{m+1}}Z^j{\Delta_{2r-1}^{m+1}}Z^j\biggr)\nn\\
&&
\quad\,\,\,:=J_1+2(J_2-J_3-J_4+J_5).
\end{eqnarray}
$Z^i$ denotes the $i^{th}$ element of $Z_x:=(L_t^x, g(x))$ where $i=1,2.$  When $i= j$, the above equation vanishes. When $i\neq j, l\neq r$, we first consider the case when $u=i$
\begin{eqnarray}\nn
&& 
J_1 = \displaystyle\sum_{l}(\varphi_l^1)^2\bigl(\Delta_{2l-1}^{m+1}Z^1\Delta_{2l}^{m+1}Z^2-\Delta_{2l}^{m+1}Z^1\Delta_{2l-1}^{m+1}Z^2\bigr)^2\nn\\
&&
\quad\quad+ \displaystyle\sum_{l}(\varphi_l^2)^2\bigl(\Delta_{2l-1}^{m+1}Z^2\Delta_{2l}^{m+1}Z^1-\Delta_{2l}^{m+1}Z^2\Delta_{2l-1}^{m+1}Z^1\bigr)^2\nn\\
&&
\,\,\quad= \displaystyle\sum_{l}\bigl(L_t^{x_{2l-2}^{m+1}} - L_t^{x_{k-1}^{n}}\bigr)^2\biggl[\bigl(L_t^{x_{2l-1}^{m+1}}-L_t^{x_{2l-2}^{m+1}}\bigr)\bigl(g(x_{2l}^{m+1})-g(x_{2l-1}^{m+1})\bigr)\nn\\
&&
\quad\quad\quad\quad-\bigl(L_t^{x_{2l}^{m+1}}-L_t^{x_{2l-1}^{m+1}}\bigr)\bigl(g(x_{2l-1}^{m+1})-g(x_{2l-2}^{m+1})\bigr)\biggr]^2\nn\\
&&
\quad\quad+ \displaystyle\sum_{l}\bigl(g(x_{2l-2}^{m+1})-g(x_{k-1}^{n})\bigr)^2\biggl[\bigl(g(x_{2l-1}^{m+1})-g(x_{2l-2}^{m+1})\bigr)\bigl(L_t^{x_{2l}^{m+1}}-L_t^{x_{2l-1}^{m+1}}\bigr)\nn\\
&&
\quad\quad\quad\quad-\bigl(g(x_{2l}^{m+1})-g(x_{2l-1}^{m+1})\bigr)\bigl(L_t^{x_{2l-1}^{m+1}}-L_t^{x_{2l-2}^{m+1}}\bigr)\biggr]^2\nn\\
&&
\,\,\quad:=I_1+I_2.
\end{eqnarray}

First, we estimate $EI_1$.\,The estimation of $EI_2$ can be done similarly.\,Set $H_1=L_t^{x_{2l-2}^{m+1}} - L_t^{x_{k-1}^{n}}$, $H_2 = L_t^{x_{2l-1}^{m+1}}-L_t^{x_{2l-2}^{m+1}}$ and $H_3 = L_t^{x_{2l}^{m+1}}-L_t^{x_{2l-1}^{m+1}}$, then
\begin{eqnarray}\nn
&&
I_1= \displaystyle\sum_{l}\biggl[  {H_1}^2 {H_2}^2 \bigl(g(x_{2l}^{m+1})-g(x_{2l-1}^{m+1})\bigr)^2+ {H_1}^2 {H_3}^2 \bigl(g(x_{2l-1}^{m+1})-g(x_{2l-2}^{m+1})\bigr)^2\nn\\
&&
\,\quad\quad-2{H_1}^2 {H_2} H_3\bigl(g(x_{2l}^{m+1})-g(x_{2l-1}^{m+1})\bigr)\bigl(g(x_{2l-1}^{m+1})-g(x_{2l-2}^{m+1})\bigr)\biggr].
\end{eqnarray}
We estimate each term in $I_1$ in the following. The bound of the first term in $I_1$ is given by
\begin{eqnarray}\nn
&&
\quad E\displaystyle\sum_{l} {H_1}^2 {H_2}^2 \bigl(g(x_{2l}^{m+1})-g(x_{2l-1}^{m+1})\bigr)^2\nn\\
&&
\leq c\:2^{m-n} \bigl(\frac{1}{2^n}\bigr)^{\alpha-1}\bigl(\frac{1}{2^{m+1}}\bigr)^{\alpha-1}\bigl(\frac{1}{2^m}\bigr)^{2h}\nn\\
&&
\leq c\:\bigl(\frac{1}{2^n}\bigr)^{\alpha}\bigl(\frac{1}{2^m}\bigr)^{2h+\alpha-2},
\end{eqnarray}
where c is a generic constant.
The bound of the second term in $I_1$ is similar to the first term with
\begin{equation}
\displaystyle\sum_{l} E{H_1}^2 {H_3}^2 \bigl(g(x_{2l-1}^{m+1})-g(x_{2l-2}^{m+1})\bigr)^2\leq c\:\bigl(\frac{1}{2^n}\bigr)^{\alpha}\bigl(\frac{1}{2^m}\bigr)^{2h+\alpha-2}.
\end{equation}
The bound of the third term in $I_1$ is given by
\begin{eqnarray}
&&
\quad\displaystyle\sum_{l}E {H_1}^2 {H_2} H_3\bigl(g(x_{2l}^{m+1})-g(x_{2l-1}^{m+1})\bigr)\bigl(g(x_{2l-1}^{m+1})-g(x_{2l-2}^{m+1})\bigr)\nn\\
&&
\leq c \bigl(\frac{1}{2^m}\bigr)^{2h}2^{m-n} \bigl( \frac{1}{2^n}\bigr)^{\alpha-1} \bigl( \frac{1}{2^m}\bigr)^{\alpha-1}\nn\\
&&
\leq c \biggl(\frac{1}{2^n}\biggr)^{\alpha} \bigl( \frac{1}{2^m}\bigr)^{2h+\alpha-2}.
\end{eqnarray}

Hence, we have
\begin{equation}\label{eq: 4.13}
EI_1\leq c \bigl(\frac{1}{2^n}\bigr)^{\alpha}\bigl(\frac{1}{2^m}\bigr)^{2h+\alpha-2}.
\end{equation}
Now, we compute the estimation for $EI_2$ term, that is
\begin{eqnarray}\label{eq: 4.14}\nn
&&
\quad E\biggl\{\displaystyle\sum_{l}\bigl(g(x_{2l-2}^{m+1})-g(x_{k-1}^{n})\bigr)^2\biggl[\bigl(g(x_{2l-1}^{m+1})-g(x_{2l-2}^{m+1})\bigr)\bigl(L_t^{x_{2l}^{m+1}}-L_t^{x_{2l-1}^{m+1}}\bigr)\nn\\
&&
\quad\quad\qquad\quad-\bigl(g(x_{2l}^{m+1})-g(x_{2l-1}^{m+1})\bigr)\bigl(L_t^{x_{2l-1}^{m+1}}-L_t^{x_{2l-2}^{m+1}}\bigr)\biggr]^2\biggr\}\nn\\
&&
\leq c\,2^{m-n} \bigl(\frac{1}{2^{m+1}}\bigr)^{2h+\alpha-1}(\frac{1}{2^n})^{2h}\nn\\
&&
\leq c\,(\frac{1}{2^n})^{1+2h} \bigl(\frac{1}{2^{m}}\bigr)^{2h+\alpha-2}.
\end{eqnarray}
As $J_2, J_3, J_4,$ and $J_5$ have similar structure, we only need to estimate one of them. The estimation of $J_2$
\begin{eqnarray*}\nn
&&
J_2 =\displaystyle\sum_{r<l}(L_t^{x_{2l-2}^{m+1}} - L_t^{x_{k-1}^{n}})(L_t^{x_{2r-2}^{m+1}} - L_t^{x_{k-1}^{n}})(L_t^{x_{2l-1}^{m+1}} - L_t^{x_{2l-2}^{m+1}})(L_t^{x_{2r-1}^{m+1}} - L_t^{x_{2r-2}^{m+1}})\nn\\
&&
\quad\quad\quad\qquad\times\bigl(g(x_{2r}^{m+1})-g(x_{2r-1}^{m+1})\bigr)\bigl(g(x_{2l}^{m+1})-g(x_{2l-1}^{m+1})\bigr)\nn\\
&&
\,\quad\quad+\displaystyle\sum_{r<l}\bigl(g(x_{2l-2}^{m+1}) - g({x_{k-1}^{n}})\bigr)\bigl(g({x_{2r-2}^{m+1}}) -g({x_{k-1}^{n}})\bigr)\bigl(g({x_{2l-1}^{m+1}}) - g({x_{2l-2}^{m+1}})\bigr)\nn\\
&&
\quad\quad\quad\qquad\times\bigl(g({x_{2r-1}^{m+1}}) - g({x_{2r-2}^{m+1}})\bigr)\bigl(L_t^{x_{2r}^{m+1}}-L_t^{x_{2r-1}^{m+1}}\bigr)\bigl(L_t^{x_{2l}^{m+1}}-L_t^{x_{2l-1}^{m+1}}\bigr).
\end{eqnarray*}

We estimate the first term  in $J_2$ first. By (\ref{consumed}),  we have
\begin{equation}
\bigl(g(x_{2r}^{m+1})-g(x_{2r-1}^{m+1})\bigr)\bigl(g(x_{2l}^{m+1})-g(x_{2l-1}^{m+1})\bigr)
\leq \frac{{\varrho}^{2h}}{2}\biggl(|1+\frac{1}{l-r}|^{2h}+|1-\frac{1}{l-r}|^{2h}-2\biggr),
\end{equation}
which does not vanish if $h\neq\frac{1}{2}$, $l\neq r$ and $\varrho=\frac{l-r}{2^m}> 0$. Using Taylor expansion at $\frac{1}{l-r}=0$, one can show that there is a constant $C_h$ depending on h such that
\begin{equation}
\biggl||1+\frac{1}{l-r}|^{2h}+|1-\frac{1}{l-r}|^{2h}-2\biggr|\leq C_h \biggl(\frac{1}{l-r}\biggr)^2.
\end{equation}
Therefore, we have
\begin{equation}
\bigl(g(x_{2r}^{m+1})-g(x_{2r-1}^{m+1})\bigr)\bigl(g(x_{2l}^{m+1})-g(x_{2l-1}^{m+1})\bigr)\leq c \biggl(\frac{1}{2^m}\biggr)^{2h}\biggl(\frac{1}{l-r}\biggr)^{2-2h}.
\end{equation}
Hence, it follows that
\begin{eqnarray}\label{similar}\nn
&&
\quad\displaystyle\sum_{r<l}E\biggl[(L_t^{x_{2l-2}^{m+1}} - L_t^{x_{k-1}^{n}})(L_t^{x_{2r-2}^{m+1}} - L_t^{x_{k-1}^{n}})(L_t^{x_{2l-1}^{m+1}} - L_t^{x_{2l-2}^{m+1}})(L_t^{x_{2r-1}^{m+1}} - L_t^{x_{2r-2}^{m+1}})\nn\\
&&
\qquad\qquad\quad\times\bigl(g(x_{2r}^{m+1})-g(x_{2r-1}^{m+1})\bigr)\bigl(g(x_{2l}^{m+1})-g(x_{2l-1}^{m+1})\bigr)\biggr]\nn\\
&&
\leq \displaystyle\sum_{r<l}c\biggl(\frac{1}{2^m}\biggr)^{2h}\biggl(\frac{1}{l-r}\biggr)^{2-2h}\bigl(E(L_t^{x_{2l-2}^{m+1}} - L_t^{x_{k-1}^{n}})^4\bigr)^{\frac{1}{4}}\bigl(E(L_t^{x_{2r-2}^{m+1}} - L_t^{x_{k-1}^{n}})^4\bigr)^{\frac{1}{4}}\bigl(E(L_t^{x_{2l-1}^{m+1}} - L_t^{x_{2l-2}^{m+1}})^4\bigr)^{\frac{1}{4}}\nn\\
&&
\qquad\quad\quad\quad\times\bigl(E(L_t^{x_{2r-1}^{m+1}} - L_t^{x_{2r-2}^{m+1}})^4\bigr)^{\frac{1}{4}}\nn\\
&&
\leq \displaystyle\sum_{r<l} c \biggl(\frac{1}{2^m}\biggr)^{2h}\biggl(\frac{1}{l-r}\biggr)^{2-2h}t^{\frac{2(\alpha-1)}{\alpha}}\bigl(\frac{1}{2^n}\bigr)^{\alpha-1}\bigl(\frac{1}{2^m}\bigr)^{\alpha-1}\nn\\
&&
\leq c\: t^{\frac{2(\alpha-1)}{\alpha}}\:\bigl(\frac{1}{2^n}\bigr)^{\alpha}\bigl(\frac{1}{2^m}\bigr)^{2h+\alpha-2},
\end{eqnarray}
where one summation is consumed by $\biggl(\frac{1}{l-r}\biggr)^{2-2h}$ and the second inequality is due to the Lemma 3.1.

The estimation for the second term in $J_2$ is given by
\begin{eqnarray}\nn
&&
\quad E\Biggl|\displaystyle\sum_{r<l}\bigl(g(x_{2l-2}^{m+1}) - g({x_{k-1}^{n}})\bigr)\bigl(g({x_{2r-2}^{m+1}}) -g({x_{k-1}^{n}})\bigr)\bigl(g({x_{2l-1}^{m+1}}) - g({x_{2l-2}^{m+1}})\bigr)\nn\\
&&
\quad\quad\quad\quad\times\bigl(g({x_{2r-1}^{m+1}}) - g({x_{2r-2}^{m+1}})\bigr)\bigl(L_t^{x_{2r}^{m+1}}-L_t^{x_{2r-1}^{m+1}}\bigr)\bigl(L_t^{x_{2l}^{m+1}}-L_t^{x_{2l-1}^{m+1}}\bigr)\Biggr|\nn\\
&&
\leq c\, 2^{m-n}(\frac{1}{2^m})^{2h}(\frac{1}{2^n})^{2h}(\frac{1}{2^m})^{\alpha-1}\nn\\
&&
\leq c\,(\frac{1}{2^n})^{1+2h}(\frac{1}{2^m})^{2h+\alpha-2}.
\end{eqnarray}
Therefore, the bound for the term $EA_1^2$ is
\begin{equation}\label{eq: 4.15}
EA_1^2\leq c\{ t^{\frac{2(\alpha-1)}{\alpha}}\bigl(\frac{1}{2^n}\bigr)^{\alpha}\bigl(\frac{1}{2^m}\bigr)^{2h+\alpha-2}+(\frac{1}{2^n})^{1+2h}(\frac{1}{2^m})^{2h+\alpha-2}\}.
\end{equation}
Next we estimate the term $A_3$ in (\ref{eq: 4.12}), that is
\begin{eqnarray*}
&&
\quad E\displaystyle\biggl|\sum_{l}\Delta_{2l-1}^{m+1}Z\otimes\Delta_{2l}^{m+1}Z\otimes\Delta_{2l}^{m+1}Z\biggr|^2\nn\\
&&
=E\displaystyle\sum_{i,j,u}\displaystyle\sum_{l}\bigl(\Delta_{2l-1}^{m+1}Z^i\bigr)^2\bigl(\Delta_{2l}^{m+1}Z^j\bigr)^2\bigl(\Delta_{2l}^{m+1}Z^u\bigr)^2\nn\\
&&
\quad + \displaystyle\sum_{i, j, u}\displaystyle\sum_{r< l}E\biggl[\bigl(\Delta_{2r-1}^{m+1}Z^i\Delta_{2l-1}^{m+1}Z^i\bigr)\bigl(\Delta_{2r}^{m+1}Z^j\Delta_{2l}^{m+1}Z^j\bigr)\bigl(\Delta_{2r}^{m+1}Z^u\Delta_{2l}^{m+1}Z^u\bigr)\biggr]\nn\\
&&
\quad + \displaystyle\sum_{i}\displaystyle\sum_{r< l}E\bigl(\Delta_{2r-1}^{m+1}Z^i\Delta_{2l-1}^{m+1}Z^i\bigr)\bigl(\Delta_{2l}^{m+1}Z^i\bigr)^2\bigl(\Delta_{2r}^{m+1}Z^i\bigr)^2\nn\\
\end{eqnarray*}
\begin{eqnarray}\label{longequation}
&&
=\displaystyle\sum_{l}E\bigl(L_t^{x_{2l-1}^{m+1}} - L_t^{x_{2l-2}^{m+1}}\bigr)^2\bigl(L_t^{x_{2l}^{m+1}} - L_t^{x_{2l-1}^{m+1}}\bigr)^2\bigl(g(x_{2l}^{m+1})-g(x_{2l-1}^{m+1})\bigr)^2\nn\\
&&
\quad+\displaystyle\sum_{l}E\bigl(g(x_{2l-1}^{m+1}) - g({x_{2l-2}^{m+1}})\bigr)^2\bigl(g(x_{2l}^{m+1})-g(x_{2l-1}^{m+1})\bigr)^2\bigl(L_t^{x_{2l}^{m+1}} - L_t^{x_{2l-1}^{m+1}}\bigr)^2\nn\\
&&
\quad + \displaystyle\sum_{r< l}E\bigl(L_t^{x_{2r-1}^{m+1}}-L_t^{x_{2r-2}^{m+1}}\bigr)\bigl(L_t^{x_{2l-1}^{m+1}}-L_t^{x_{2l-2}^{m+1}}\bigr)\bigl(L_t^{x_{2r}^{m+1}}-L_t^{x_{2r-1}^{m+1}}\bigr)\nn\\
&&
\quad\quad\quad\bigl(L_t^{x_{2l}^{m+1}}-L_t^{x_{2l-1}^{m+1}}\bigr)\bigl(g({x_{2r}^{m+1}}) - g({x_{2r-1}^{m+1}})\bigr)\bigl(g({x_{2l}^{m+1}}) - g({x_{2l-1}^{m+1}})\bigr)\nn\\
&&
\quad + \displaystyle\sum_{r< l}E\bigl(g(x_{2r-1}^{m+1})-g(x_{2r-2}^{m+1})\bigr)\bigl(g(x_{2l-1}^{m+1})-g(x_{2l-2}^{m+1})\bigr)\bigl(g(x_{2r}^{m+1})-g(x_{2r-1}^{m+1})\bigr)\nn\\
&&
\quad\quad\quad\bigl(g(x_{2l}^{m+1})-g(x_{2l-1}^{m+1})\bigr)\bigl(L_t^{x_{2r}^{m+1}} - L_t^{x_{2r-1}^{m+1}}\bigr)\bigl(L_t^{x_{2l}^{m+1}} - L_t^{x_{2l-1}^{m+1}}\bigr)\nn\\
&&
\quad+ \displaystyle\sum_{r< l}E\bigl(L_t^{x_{2r-1}^{m+1}}-L_t^{x_{2r-2}^{m+1}}\bigr)\bigl(L_t^{x_{2l-1}^{m+1}}-L_t^{x_{2l-2}^{m+1}}\bigr)\bigl(L_t^{x_{2l}^{m+1}}-L_t^{x_{2l-1}^{m+1}}\bigr)^2\bigl(L_t^{x_{2r}^{m+1}}-L_t^{x_{2r-1}^{m+1}}\bigr)^2\nn\\
&&
\quad + \displaystyle\sum_{r< l}E(g(x_{2r-1}^{m+1})-g(x_{2r-2}^{m+1})\bigr)\bigl(g(x_{2l-1}^{m+1})-g(x_{2l-2}^{m+1})\bigr)\bigl(g(x_{2l}^{m+1})-g(x_{2l-1}^{m+1})\bigr)^2\bigl(g(x_{2r}^{m+1})-g(x_{2r-1}^{m+1})\bigr)^2\nn\\
&&
\leq\displaystyle\sum_{l}\biggl|E\bigl(L_t^{x_{2l-1}^{m+1}} - L_t^{x_{2l-2}^{m+1}}\bigr)^2\bigl(L_t^{x_{2l}^{m+1}} - L_t^{x_{2l-1}^{m+1}}\bigr)^2\bigl(g(x_{2l}^{m+1})-g(x_{2l-1}^{m+1})\bigr)^2\biggr|\nn\\
&&
\quad+\displaystyle\sum_{l}\biggl|E\bigl(g(x_{2l-1}^{m+1}) - g({x_{2l-2}^{m+1}})\bigr)^2\bigl(g(x_{2l}^{m+1})-g(x_{2l-1}^{m+1})\bigr)^2\bigl(L_t^{x_{2l}^{m+1}} - L_t^{x_{2l-1}^{m+1}}\bigr)^2\biggr|\nn\\
&&
\quad + \displaystyle\sum_{r< l}\biggl|E\bigl(L_t^{x_{2r-1}^{m+1}}-L_t^{x_{2r-2}^{m+1}}\bigr)\bigl(L_t^{x_{2l-1}^{m+1}}-L_t^{x_{2l-2}^{m+1}}\bigr)\bigl(L_t^{x_{2r}^{m+1}}-L_t^{x_{2r-1}^{m+1}}\bigr)\nn\\
&&
\quad\quad\quad\bigl(L_t^{x_{2l}^{m+1}}-L_t^{x_{2l-1}^{m+1}}\bigr)\bigl(g({x_{2r}^{m+1}}) - g({x_{2r-1}^{m+1}})\bigr)\bigl(g({x_{2l}^{m+1}}) - g({x_{2l-1}^{m+1}})\bigr)\biggr|\nn\\
&&
\quad + \displaystyle\sum_{r< l}\biggl|E\bigl(g(x_{2r-1}^{m+1})-g(x_{2r-2}^{m+1})\bigr)\bigl(g(x_{2l-1}^{m+1})-g(x_{2l-2}^{m+1})\bigr)\bigl(g(x_{2r}^{m+1})-g(x_{2r-1}^{m+1})\bigr)\nn\\
&&
\quad\quad\quad\bigl(g(x_{2l}^{m+1})-g(x_{2l-1}^{m+1})\bigr)\bigl(L_t^{x_{2r}^{m+1}} - L_t^{x_{2r-1}^{m+1}}\bigr)\bigl(L_t^{x_{2l}^{m+1}} - L_t^{x_{2l-1}^{m+1}}\bigr)\biggr|\nn\\
&&
\quad+ \displaystyle\sum_{r< l}\biggl|E\bigl(L_t^{x_{2r-1}^{m+1}}-L_t^{x_{2r-2}^{m+1}}\bigr)\bigl(L_t^{x_{2l-1}^{m+1}}-L_t^{x_{2l-2}^{m+1}}\bigr)\bigl(L_t^{x_{2l}^{m+1}}-L_t^{x_{2l-1}^{m+1}}\bigr)^2\bigl(L_t^{x_{2r}^{m+1}}-L_t^{x_{2r-1}^{m+1}}\bigr)^2\biggr|\nn\\
&&
\quad + \displaystyle\sum_{r< l}\biggl|E(g(x_{2r-1}^{m+1})-g(x_{2r-2}^{m+1})\bigr)\bigl(g(x_{2l-1}^{m+1})-g(x_{2l-2}^{m+1})\bigr)\bigl(g(x_{2l}^{m+1})-g(x_{2l-1}^{m+1})\bigr)^2\nn\\
&&
\quad\quad\quad\bigl(g(x_{2r}^{m+1})-g(x_{2r-1}^{m+1})\bigr)^2\biggr|.
\end{eqnarray}
Using Lemma 3.1 and Cauchy Schwarz inequality, we obtain the bound for the first term 
\begin{eqnarray}\nn
&&
\quad\displaystyle\sum_{l}\biggl|E\bigl(L_t^{x_{2l-1}^{m+1}} - L_t^{x_{2l-2}^{m+1}}\bigr)^2\bigl(L_t^{x_{2l}^{m+1}} - L_t^{x_{2l-1}^{m+1}}\bigr)^2\bigl(g(x_{2l}^{m+1})-g(x_{2l-1}^{m+1})\bigr)^2\biggr|\nn\\
&&
\leq c\:t^{\frac{2(\alpha-1)}{\alpha}}\frac{1}{2^n} \bigl(\frac{1}{2^m}\bigr)^{2h+2\alpha-3}.
\end{eqnarray}
The bound of the second term is
\begin{equation}
\displaystyle\sum_{l}\biggl|E\bigl(g(x_{2l-1}^{m+1}) - g({x_{2l-2}^{m+1}})\bigr)^2\bigl(g(x_{2l}^{m+1})-g(x_{2l-1}^{m+1})\bigr)^2\bigl(L_t^{x_{2l}^{m+1}} - L_t^{x_{2l-1}^{m+1}}\bigr)^2\biggr|
\leq c\,\frac{1}{2^n} \bigl(\frac{1}{2^m}\bigr)^{4h+\alpha-2}.
\end{equation}
Similar to (\ref{similar}), the bound of the third term is
\begin{eqnarray}\nn
&&
\quad\displaystyle\sum_{r< l}\biggl|E\bigl(L_t^{x_{2r-1}^{m+1}}-L_t^{x_{2r-2}^{m+1}}\bigr)\bigl(L_t^{x_{2l-1}^{m+1}}-L_t^{x_{2l-2}^{m+1}}\bigr)\bigl(L_t^{x_{2r}^{m+1}}-L_t^{x_{2r-1}^{m+1}}\bigr)\nn\\
&&
\quad\quad\quad\quad\bigl(L_t^{x_{2l}^{m+1}}-L_t^{x_{2l-1}^{m+1}}\bigr)\bigl(g({x_{2r}^{m+1}}) - g({x_{2r-1}^{m+1}})\bigr)\bigl(g({x_{2l}^{m+1}}) - g({x_{2l-1}^{m+1}})\bigr)\biggr|\nn\\
&&
\leq c\: t^{\frac{2(\alpha-1)}{\alpha}} \frac{1}{2^n}\bigl(\frac{1}{2^m}\bigr)^{2h+2\alpha-3}.
\end{eqnarray}

The bound of the fourth term is
\begin{eqnarray}\nn
&&
\quad\displaystyle\sum_{r< l}\biggl|E\bigl(g(x_{2r-1}^{m+1})-g(x_{2r-2}^{m+1})\bigr)\bigl(g(x_{2l-1}^{m+1})-g(x_{2l-2}^{m+1})\bigr)\bigl(g(x_{2r}^{m+1})-g(x_{2r-1}^{m+1})\bigr)\nn\\
&&
\quad\quad\quad\quad\bigl(g(x_{2l}^{m+1})-g(x_{2l-1}^{m+1})\bigr)\bigl(L_t^{x_{2r}^{m+1}} - L_t^{x_{2r-1}^{m+1}}\bigr)\bigl(L_t^{x_{2l}^{m+1}} - L_t^{x_{2l-1}^{m+1}}\bigr)\biggr|\nn\\
&&
\leq c\,\frac{1}{2^n}(\frac{1}{2^m})^{4h+\alpha-2}.
\end{eqnarray}
The estimation of the fifth term
\begin{equation}
\displaystyle\sum_{r< l}\biggl|E\bigl(L_t^{x_{2r}^{m+1}}-L_t^{x_{2r-1}^{m+1}}\bigr)^2\bigl(L_t^{x_{2l}^{m+1}}-L_t^{x_{2l-1}^{m+1}}\bigr)^2\bigl(L_t^{x_{2l-1}^{m+1}}-L_t^{x_{2l-2}^{m+1}}\bigr)\bigl(L_t^{x_{2r-1}^{m+1}}-L_t^{x_{2r-2}^{m+1}}\bigr)\biggr|
\end{equation}
is more involved. We use the following equation from \cite{Marcusr2006} in the estimation of the fifth term

\begin{equation}\label{Rosen1}
E\bigl(\displaystyle\prod_{i=1}^{n}L_t^{y_i}\bigr)=\displaystyle\sum_{\pi}E\biggl(\displaystyle\idotsint\limits_{0\leq t_1\leq t_2<\dots\leq t_n\leq t}\displaystyle\prod_{i=1}^{n}dL_{t_i}^{y_{\pi_i}}\biggr)
\end{equation}
where the sum in (\ref{Rosen1}) runs over all permutation $\pi$ of $\{1,2,\dots, n\}$.

In the following, we estimate 
\begin{equation}E\bigl(L_t^{x_{2r}^{m+1}}-L_t^{x_{2r-1}^{m+1}}\bigr)^2\bigl(L_t^{x_{2l}^{m+1}}-L_t^{x_{2l-1}^{m+1}}\bigr)^2\bigl(L_t^{x_{2l-1}^{m+1}}-L_t^{x_{2l-2}^{m+1}}\bigr)\bigl(L_t^{x_{2r-1}^{m+1}}-L_t^{x_{2r-2}^{m+1}}\bigr).\end{equation}

However, it suffices to only estimate a particular term in the summation $\pi$, it means we impose certain restriction on $\pi_i$ and only estimate terms associated with that $\pi_i$. Before we estimate this fifth term, we recall Lemma 2.4.6 from \cite{Marcusr2006}
\begin{lemma}\label{lemmaexpectation}
For any positive measurable function $f(t)$ and any $T\in[0,\infty)$ and $z\in \mathbb{R}$ we have
\[
\int_{0}^{T}f(t)dL_t^z=\int_{0}^{\infty}f(\tau_z(s))1_{\tau_z(s)<T}ds,\]
where $\left\{\tau(s), s\in\mathbb{R+}\right\}$ is a positive increasing stochastic process with stationary and independent increments.

If $H_t$ is a positive continuous $\mathcal{F}_t$ measurable function, F a positive $\mathcal{F}$ measurable function, and T a stopping time (possibly $T\equiv\infty$), then
\begin{equation}\label{willbeused}
E^x\biggl(\int_{0}^T H_t F\circ \theta_tdL_t^z\biggr)=E^z(F)E^x\biggl(\int_0^T H_tdL_t^z\biggr),
\end{equation}
where $\theta_t$ is a shift operator with $\theta_t\circ\theta_s=\theta_{t+s}$.
\end{lemma}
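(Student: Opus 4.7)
The plan is to prove both identities by a time-change argument based on the right-continuous inverse $\tau_z(s) := \inf\{t \geq 0 : L_t^z > s\}$ of the local time at $z$. Since the symmetric $\alpha$-stable process with $\alpha > 1$ has $z$ regular for itself and $L_\cdot^z$ is a continuous additive functional, applying the strong Markov property at the successive increase times of $L_\cdot^z$ shows that $\{\tau_z(s)\}_{s \geq 0}$ has stationary independent increments and is non-decreasing and right-continuous by construction; hence it is a subordinator, which identifies the abstract process appearing in the statement.

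To prove the first identity, I would use that $t \mapsto L_t^z$ is continuous and non-decreasing (Barlow's joint continuity), so the pushforward of $dL_t^z$ on $[0,T]$ under $t \mapsto L_t^z$ is Lebesgue measure on $[0, L_T^z)$. The standard change-of-variables for Stieltjes integrals against a continuous non-decreasing function then yields $\int_0^T f(t)\,dL_t^z = \int_0^{L_T^z} f(\tau_z(s))\,ds$. Since $\{s \geq 0 : \tau_z(s) < T\}$ coincides with $[0, L_T^z)$ up to a Lebesgue-null set, the right-hand side equals $\int_0^\infty f(\tau_z(s)) 1_{\{\tau_z(s) < T\}}\,ds$, giving the first identity.

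For the second identity, I would apply the first identity pathwise with $f(t) = H_t (F \circ \theta_t)$ and take $E^x$. By Tonelli (everything is nonnegative), $E^x\int_0^T H_t\,F\circ\theta_t\,dL_t^z = \int_0^\infty E^x\bigl[H_{\tau_z(s)} (F\circ\theta_{\tau_z(s)}) 1_{\{\tau_z(s) < T\}}\bigr]\,ds$. For each $s$, $\tau_z(s)$ is a stopping time, $H_{\tau_z(s)} 1_{\{\tau_z(s) < T\}}$ is $\mathcal{F}_{\tau_z(s)}$-measurable, and on $\{\tau_z(s) < \infty\}$ one has $X_{\tau_z(s)} = z$; therefore the strong Markov property gives $E^x\bigl[F\circ\theta_{\tau_z(s)} \mid \mathcal{F}_{\tau_z(s)}\bigr] = E^z[F]$. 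Pulling the constant $E^z(F)$ out of the $s$-integral and running the first identity in reverse with $f(t) = H_t$ produces $E^z(F)\, E^x\int_0^T H_t\,dL_t^z$, which is exactly \eqref{willbeused}.

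The main technical obstacle is justifying $X_{\tau_z(s)} = z$ on $\{\tau_z(s) < \infty\}$, because for a purely discontinuous L\'evy process the sample path can a priori jump across level $z$. The resolution is that $L_\cdot^z$ only charges the closed regular hitting set of $\{z\}$, so the range of $\tau_z$ lies in this set up to the at-most-countable set of jumps of $\tau_z$; this is the Blumenthal--Getoor--Hunt theory of local times for Markov processes having a regular recurrent singleton, which applies to the symmetric $\alpha$-stable process with $\alpha \in (1, 2)$. The countable exceptional set carries zero Lebesgue measure in $s$ and so contributes nothing to either identity, which completes the plan.
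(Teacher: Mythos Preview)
The paper does not actually prove this lemma; it is quoted verbatim as Lemma~2.4.6 from Marcus--Rosen \cite{Marcusr2006}, so there is no ``paper's own proof'' to compare against. Your argument is the standard one (and essentially the one Marcus--Rosen give): invert the continuous additive functional $L_\cdot^z$, use the resulting change of variables for the first identity, and then apply the strong Markov property at the stopping times $\tau_z(s)$ together with $X_{\tau_z(s)}=z$ a.e.\ to factor out $E^z(F)$. Your handling of the exceptional set where $\tau_z$ jumps is correct and is exactly the point that needs care; nothing is missing.
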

\noindent{\bf{Illustration}} In order to show how the Lemma \ref{lemmaexpectation} can be applied in what follows, we use the lemma to prove
\begin{eqnarray}\label{lemmaillu}&&
\quad E\biggl(\int_0^t\int_{t_1}^t(dL_{t_2}^{x_1}-dL_{t_2}^{x_2})(dL_{t_1}^{x_3}-dL_{t_1}^{x_4})\biggr)\nn\\ &&=E\int_0^t\biggl[E^{x_3}\biggl(\int_{0}^t dL_{t_2}^{x_1}-dL_{t_2}^{x_2}\biggr)dL_{t_1}^{x_3}-E^{x_4}\biggl(\int_{0}^t dL_{t_2}^{x_1}-dL_{t_2}^{x_2}\biggr)dL_{t_1}^{x_4}\biggr].\nn\\\end{eqnarray}

\begin{proof} First, we rewrite l.h.s. of (\ref{lemmaillu}) as following
\begin{eqnarray}\label{illustration3}&&
\quad\int_0^t\int_{t_1}^t(dL_{t_2}^{x_1}-dL_{t_2}^{x_2})(dL_{t_1}^{x_3}-dL_{t_1}^{x_4})\nn\\
&&
=\int_0^t\int_{t_1}^t dL_{t_2}^{x_1}.dL_{t_1}^{x_3}-\int_0^t\int_{t_1}^t dL_{t_2}^{x_1}.dL_{t_1}^{x_4}-\int_0^t\int_{t_1}^t dL_{t_2}^{x_2}.dL_{t_1}^{x_3}+\int_0^t\int_{t_1}^t dL_{t_2}^{x_2}.dL_{t_1}^{x_4}.\nn\\
\end{eqnarray}

We define, for example, $\int_{t_1}^tdL_{t_2}^{x_1}:=F(0,t)\circ\theta_{t_1}$ with $F(0,t):=\int_{0}^tdL_{t_2}^{x_1}.$ We apply Lemma \ref{lemmaexpectation} to the first term in (\ref{illustration3}) with $H_t\equiv 1$, then we have 
\begin{equation}
E\Biggl(\int_0^t\biggl(\int_{t_1}^tdL_{t_2}^{x_1}\biggr).dL_{t_1}^{x_3}\Biggr)=E^{x_3}\biggl(\int_{0}^t dL_{t_2}^{x_1}\biggr)E\biggl(\int_0^tdL_{t_1}^{x_3}\biggr)=E\biggl(L_{t}^{x_3}\biggr).E^{x_3}\biggl(\int_{0}^t dL_{t_2}^{x_1}\biggr),\\
\end{equation}

Similarly, we have 
\begin{align*}
-E\biggl(\int_0^t\int_{t_1}^t dL_{t_2}^{x_1}.dL_{t_1}^{x_4}\biggr)&=-E\biggl( L_{t}^{x_4}\biggr).E^{x_4}\biggl(\int_{0}^t dL_{t_2}^{x_1}\biggr),\\
-E\biggl(\int_0^t\int_{t_1}^t dL_{t_2}^{x_2}.dL_{t_1}^{x_3}\biggr)&=-E\biggl( L_{t}^{x_3}\biggr).E^{x_3}\biggl(\int_{0}^t dL_{t_2}^{x_2}\biggr),\\
E\biggl(\int_0^t\int_{t_1}^t dL_{t_2}^{x_2}.dL_{t_1}^{x_4}\biggr)&=E\biggl( L_{t}^{x_4}\biggr).E^{x_4}\biggl(\int_{0}^t dL_{t_2}^{x_2}\biggr).
\end{align*}
Hence, this concludes the proof, since
\begin{eqnarray}&&
\quad E\biggl(\int_0^t\int_{t_1}^t(dL_{t_2}^{x_1}-dL_{t_2}^{x_2})(dL_{t_1}^{x_3}-dL_{t_1}^{x_4})\biggr)\nn\\
&&
=E\biggl(L_{t}^{x_3}\biggr).E^{x_3}\biggl(\int_{0}^t dL_{t_2}^{x_1}-dL_{t_2}^{x_2}\biggr)-E\biggl(L_{t}^{x_4}\biggr).E^{x_4}\biggl(\int_{0}^t dL_{t_2}^{x_1}-dL_{t_2}^{x_2}\biggr)\nn\\
&&
=E\biggl[\int_{0}^{t}E^{x_3}\biggl(\int_{0}^t dL_{t_2}^{x_1}-dL_{t_2}^{x_2}\biggr).dL_{t_1}^{x_3}\biggr]-E\biggl[\int_0^tE^{x_4}\biggl(\int_{0}^t dL_{t_2}^{x_1}-dL_{t_2}^{x_2}\biggr).dL_{t_1}^{x_4}\biggr].
\end{eqnarray}
\end{proof}
Although there are many terms in the summation $\pi$ but there are only a finite number of terms. We estimate the following particular term only.  Other terms can be estimated similarly. First note
\begin{eqnarray}
&&
\quad E\int_{0}^{t}\int_{t_1}^{t}\int_{t_2}^{t}\int_{t_3}^{t}\int_{t_4}^{t}\int_{t_5}^{t}\biggl(dL_{t_6}^{x_{2l}}-dL_{t_6}^{x_{2l-1}}\biggr).\biggl(dL_{t_5}^{x_{2l}}-dL_{t_5}^{x_{2l-1}}\biggr)\nn\\
&&
\quad\quad\qquad\qquad\qquad\qquad.\biggl(dL_{t_4}^{x_{2r}}-dL_{t_4}^{x_{2r-1}}\biggr).\biggl(dL_{t_3}^{x_{2r}}-dL_{t_3}^{x_{2r-1}}\biggr)\nn\\
&&
\quad\quad\qquad\qquad\qquad\qquad.\biggl(dL_{t_2}^{x_{2l-1}}-dL_{t_2}^{x_{2l-2}}\biggr).\biggl(dL_{t_1}^{x_{2r-1}}-dL_{t_1}^{x_{2r-2}}\biggr)\nn\\
&&
=E\int_{0}^{t}\int_{t_1}^{t}\int_{t_2}^{t}\int_{t_3}^{t}\Biggl[\int_{t_4}^{t}E^{x_{2l}}\biggl(\int_{0}^{t}dL_{t_6}^{x_{2l}}-dL_{t_6}^{x_{2l-1}}\biggr). dL_{t_5}^{x_{2l}} \nn\\
&&
\quad\quad\qquad\qquad\qquad\qquad-E^{x_{2l-1}}\biggl(\int_{0}^{t}dL_{t_6}^{x_{2l}}-dL_{t_6}^{x_{2l-1}}\biggr).dL_{t_5}^{x_{2l-1}}\Biggr]\nn\\
&&
\quad\quad\qquad\qquad\qquad\qquad.\biggl(dL_{t_4}^{x_{2r}}-dL_{t_4}^{x_{2r-1}}\biggr).\biggl(dL_{t_3}^{x_{2r}}-dL_{t_3}^{x_{2r-1}}\biggr)\nn\\
&&
\quad\quad\qquad\qquad\qquad\qquad.\biggl(dL_{t_2}^{x_{2l-1}}-dL_{t_2}^{x_{2l-2}}\biggr).\biggl(dL_{t_1}^{x_{2r-1}}-dL_{t_1}^{x_{2r-2}}\biggr),
\end{eqnarray}
where we have used (\ref{willbeused}) with $H_t\equiv 1$ in the Lemma \ref{lemmaexpectation} for the two innermost integrals.

We use $p_t(x,y)$ to denote the transitional probability density function of the symmetric stable process X. We recall only partial results of Theorem 3.6.5 from \cite{Marcusr2006} which will be used in our estimation (as symmetric stable L\'evy processes is a class of Borel right processes, hence we simply replace the Borel right procesess in the original theorem by symmetric stable process instead.).

\begin{lemma}\label{speciallemma} Let X be a strongly symmetric stable process and assume that its $\beta$-potential density, $u^\beta(x,y)$, is finite for all x, y $\in S$ (S is the state space of the process.). Let $L_t^y$ be a local time of X at y, with 
\begin{equation*}
E^x\biggl(\int_{0}^{\infty}e^{-\beta t}dL_t^y\biggr)=u^\beta(x,y).
\end{equation*}
Then for every t
\begin{equation}\label{lemma2.4.3.1}
E^x(L_t^y)=\int_0^t p_s(x,y)ds.\end{equation}


\end{lemma}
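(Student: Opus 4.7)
The plan is to identify $E^x(L_t^y)$ and $\int_0^t p_s(x,y)\,ds$ via their Laplace transforms in the variable $t$, exploiting the hypothesis that the $\beta$-potential density satisfies
\[
u^\beta(x,y)=E^x\Bigl(\int_0^\infty e^{-\beta t}dL_t^y\Bigr).
\]
Since both candidate functions of $t$ are right-continuous (in fact $t\mapsto L_t^y$ is continuous for the strongly symmetric stable process, and $t\mapsto\int_0^t p_s(x,y)\,ds$ is absolutely continuous), Laplace uniqueness will then pin them down as equal for every $t\geq 0$.

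First I would compute the Laplace transform of the right-hand side. By Fubini, for any $\beta>0$,
\[
\int_0^\infty e^{-\beta t}\Bigl(\int_0^t p_s(x,y)\,ds\Bigr)dt=\int_0^\infty p_s(x,y)\Bigl(\int_s^\infty e^{-\beta t}dt\Bigr)ds=\frac{1}{\beta}\int_0^\infty e^{-\beta s}p_s(x,y)\,ds=\frac{u^\beta(x,y)}{\beta},
\]
where the last equality uses the standard identification $u^\beta(x,y)=\int_0^\infty e^{-\beta s}p_s(x,y)\,ds$ of the $\beta$-potential density with the Laplace transform of the transition density.

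Next I would compute the Laplace transform of the left-hand side $E^x(L_t^y)$. Using the Stieltjes integration-by-parts formula $\int_0^\infty e^{-\beta t}dL_t^y=\beta\int_0^\infty e^{-\beta t}L_t^y\,dt$ (valid because $L_\cdot^y$ is continuous, non-decreasing and starts at $0$, and $e^{-\beta t}L_t^y\to 0$ as $t\to\infty$ in expectation thanks to the finiteness of $u^\beta$), and then Fubini-Tonelli for the non-negative integrand,
\[
u^\beta(x,y)=E^x\Bigl(\int_0^\infty e^{-\beta t}dL_t^y\Bigr)=\beta\int_0^\infty e^{-\beta t}E^x(L_t^y)\,dt.
\]
Equating this with the Laplace transform computed above gives
\[
\int_0^\infty e^{-\beta t}E^x(L_t^y)\,dt=\frac{u^\beta(x,y)}{\beta}=\int_0^\infty e^{-\beta t}\Bigl(\int_0^t p_s(x,y)\,ds\Bigr)dt,\qquad \beta>0.
\]
By uniqueness of Laplace transforms on $[0,\infty)$, the right-continuous functions $t\mapsto E^x(L_t^y)$ and $t\mapsto\int_0^t p_s(x,y)\,ds$ coincide for every $t\geq 0$, which is the claim.

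The only delicate point is justifying the integration by parts under the expectation $E^x$ and exchanging the expectation with the $dt$-integral: this requires that $E^x\bigl(\int_0^\infty e^{-\beta t}L_t^y\,dt\bigr)<\infty$, which follows from the hypothesis $u^\beta(x,y)<\infty$ together with monotonicity of $L_\cdot^y$ and Tonelli for non-negative integrands. This is the main (and essentially the only) technical obstacle; everything else reduces to a straightforward Laplace-transform identification.
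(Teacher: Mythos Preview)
Your Laplace-transform argument is correct and is the standard proof of this result. Note, however, that the paper does not actually prove this lemma: it is quoted verbatim as a partial statement of Theorem~3.6.5 from Marcus--Rosen \cite{Marcusr2006}, so there is no ``paper's own proof'' to compare against. Your argument is essentially the one Marcus--Rosen give, and it is complete as written.
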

\noindent One may write $u^{\beta}(y-x)=u^{\beta}(x,y)$ as pointed out in \cite{Marcusr2006}.

By (\ref{lemma2.4.3.1}) and the property $p_t(x,y)=p_t(x-y)$, then we have 
\begin{eqnarray}
&&
E^{x_{2l}}\biggl(\int_{0}^{t}dL_{t_6}^{x_{2l}}-dL_{t_6}^{x_{2l-1}}\biggr)=E^{x_{2l}}\biggl(L_{t}^{x_{2l}}-L_{t}^{x_{2l-1}}\biggr)\nn\\
&&
\qquad\qquad\quad\qquad\qquad\qquad\quad=\int_{0}^{t}\biggl(p_{s}(0)-p_{s}(x_{2l}-x_{2l-1})\biggr)ds\nn\\
&&
\qquad\qquad\quad\qquad\qquad\qquad\quad=\int_{0}^{t}\biggl(p_{\Delta_{t_6}}(0)-p_{\Delta_{t_6}}(x_{2l}-x_{2l-1})\biggr)dt_6.
\end{eqnarray}
In the last equality of (3.92), we have purposely used $dt_6$ instead of $ds$ to indicate that this is for the innermost integral.
\,Then,\,we have
\begin{eqnarray}\label{eq: 4.14}
&&
\quad E\int_{0}^{t}\int_{t_1}^{t}\int_{t_2}^{t}\int_{t_3}^{t}\Biggl[\int_{t_4}^{t}E^{x_{2l}}\biggl(\int_{0}^{t}dL_{t_6}^{x_{2l}}-dL_{t_6}^{x_{2l-1}}\biggr). dL_{t_5}^{x_{2l}} \nn\\
&&
\quad\quad\qquad\qquad\qquad\qquad-E^{x_{2l-1}}\biggl(\int_{t_5}^{t}dL_{t_6}^{x_{2l}}-dL_{t_6}^{x_{2l-1}}\biggr).dL_{t_5}^{x_{2l-1}}\Biggr]\nn\\
&&
\quad\quad\qquad\qquad\qquad\qquad.\biggl(dL_{t_4}^{x_{2r}}-dL_{t_4}^{x_{2r-1}}\biggr).\biggl(dL_{t_3}^{x_{2r}}-dL_{t_3}^{x_{2r-1}}\biggr)\nn\\
&&
\quad\quad\qquad\qquad\qquad\qquad.\biggl(dL_{t_2}^{x_{2l-1}}-dL_{t_2}^{x_{2l-2}}\biggr).\biggl(dL_{t_1}^{x_{2r-1}}-dL_{t_1}^{x_{2r-2}}\biggr)\nn\\
&&
=E\int_{0}^{t}\int_{t_1}^{t}\int_{t_2}^{t}\int_{t_3}^{t}\Biggl[\int_{t_4}^{t}\int_{0}^{t}\biggl(p_{\Delta_{t_6}}(0)-p_{\Delta_{t_6}}(x_{2l}-x_{2l-1})\biggr)dt_6. dL_{t_5}^{x_{2l}} \nn\\
&&
\quad\quad\qquad\qquad\qquad\qquad-\biggl(p_{\Delta_{t_6}}(x_{2l}-x_{2l-1})-p_{\Delta_{t_6}}(0)\biggr)dt_6.dL_{t_5}^{x_{2l-1}}\Biggr]\nn\\
&&
\quad\quad\qquad\qquad\qquad\qquad.\biggl(dL_{t_4}^{x_{2r}}-dL_{t_4}^{x_{2r-1}}\biggr).\biggl(dL_{t_3}^{x_{2r}}-dL_{t_3}^{x_{2r-1}}\biggr)\nn\\
&&
\quad\quad\qquad\qquad\qquad\qquad.\biggl(dL_{t_2}^{x_{2l-1}}-dL_{t_2}^{x_{2l-2}}\biggr).\biggl(dL_{t_1}^{x_{2r-1}}-dL_{t_1}^{x_{2r-2}}\biggr),\label{eq: important}
\end{eqnarray}
where $\Delta_{t_i}=t_i-t_{i-1}$. 

Iterating the procedure in (\ref{eq: 4.14}), we obtain the following, where we have used the notation $\Delta_r p(x)=p_r (0)- p_r(x)$
\begin{eqnarray*} 
&&
\quad E\int_{0}^{t}\int_{t_1}^{t}\int_{t_2}^{t}\int_{t_3}^{t}\int_{t_4}^{t}\int_{t_5}^{t}\biggl(dL_{t_6}^{x_{2l}}-dL_{t_6}^{x_{2l-1}}\biggr).\biggl(dL_{t_5}^{x_{2l}}-dL_{t_5}^{x_{2l-1}}\biggr)\nn\\
&&
\quad\quad\qquad\qquad\qquad\qquad.\biggl(dL_{t_4}^{x_{2r}}-dL_{t_4}^{x_{2r-1}}\biggr).\biggl(dL_{t_3}^{x_{2r}}-dL_{t_3}^{x_{2r-1}}\biggr)\nn\\
&&
\quad\quad\qquad\qquad\qquad\qquad.\biggl(dL_{t_2}^{x_{2l-1}}-dL_{t_2}^{x_{2l-2}}\biggr).\biggl(dL_{t_1}^{x_{2r-1}}-dL_{t_1}^{x_{2r-2}}\biggr)\nn\\
&&
=E\int_{0}^{t}\int_{t_1}^{t}\int_{t_2}^{t}\int_{t_3}^{t}\Biggl[\int_{t_4}^{t}\int_{0}^{t}\biggl(p_{\Delta_{t_6}}(0)-p_{\Delta_{t_6}}(x_{2l}-x_{2l-1})\biggr)dt_6. \biggl(dL_{t_5}^{x_{2l}} + dL_{t_5}^{x_{2l-1}}\biggr)\Biggr]\nn\\
&&
\quad\quad\qquad\qquad\qquad\qquad.\biggl(dL_{t_4}^{x_{2r}}-dL_{t_4}^{x_{2r-1}}\biggr).\biggl(dL_{t_3}^{x_{2r}}-dL_{t_3}^{x_{2r-1}}\biggr)\nn\\
&&
\quad\quad\qquad\qquad\qquad\qquad.\biggl(dL_{t_2}^{x_{2l-1}}-dL_{t_2}^{x_{2l-2}}\biggr).\biggl(dL_{t_1}^{x_{2r-1}}-dL_{t_1}^{x_{2r-2}}\biggr)\nn\\
&&
=E\int_{0}^{t}\int_{t_1}^{t}\int_{t_2}^{t}\Biggl[\int_{t_3}^{t}E^{x_{2r}}\int_{0}^{t}\int_{0}^{t}{\Delta_{t_6}}p(x_{2l}-x_{2l-1})dt_6. \biggl(dL_{t_5}^{x_{2l}} + dL_{t_5}^{x_{2l-1}}\biggr).dL_{t_4}^{x_{2r}}\nn\\
&&
\qquad\quad\quad-E^{x_{2r-1}}\int_{0}^{t}\int_{0}^{t}{\Delta_{t_6}}p(x_{2l}-x_{2l-1})dt_6. \biggl(dL_{t_5}^{x_{2l}} + dL_{t_5}^{x_{2l-1}}\biggr).dL_{t_4}^{x_{2r-1}}\Biggr]\nn\\
&&
\qquad\quad\quad.\biggl(dL_{t_3}^{x_{2r}}-dL_{t_3}^{x_{2r-1}}\biggr).\biggl(dL_{t_2}^{x_{2l-1}}-dL_{t_2}^{x_{2l-2}}\biggr).\biggl(dL_{t_1}^{x_{2r-1}}-dL_{t_1}^{x_{2r-2}}\biggr)\nn\\
\end{eqnarray*}
\begin{eqnarray*}
&&
=E\int_{0}^{t}\int_{t_1}^{t}\int_{t_2}^{t}\Biggl[\int_{t_3}^{t}\int_{0}^{t}\int_{0}^{t}{\Delta_{t_6}}p(x_{2l}-x_{2l-1})dt_6. \biggl(p_{\Delta_{t_5}}(x_{2r}-x_{2l})\nn\\
&&
\qquad\quad\quad\qquad\quad\quad\qquad\quad\qquad\quad\quad\qquad\quad\quad\quad+p_{\Delta_{t_5}}(x_{2r}-x_{2l-1})\biggr)dt_5.dL_{t_4}^{x_{2r}}\nn\\
&&
\qquad\quad\quad-{\Delta_{t_6}}p(x_{2l}-x_{2l-1})dt_6. \biggl(p_{\Delta_{t_5}}(x_{2r-1}-x_{2l})+
p_{\Delta_{t_5}}(x_{2r-1}-x_{2l-1})\biggr)dt_5.dL_{t_4}^{x_{2r-1}}\Biggr]\nn\\
&&
\qquad\quad\quad.\biggl(dL_{t_3}^{x_{2r}}-dL_{t_3}^{x_{2r-1}}\biggr).\biggl(dL_{t_2}^{x_{2l-1}}-dL_{t_2}^{x_{2l-2}}\biggr).\biggl(dL_{t_1}^{x_{2r-1}}-dL_{t_1}^{x_{2r-2}}\biggr)\nn\\
&&
=E\int_{0}^{t}\int_{t_1}^{t}\Biggl\{\int_{t_2}^{t}\int_{0}^{t}\int_{0}^{t}\int_{0}^{t}\biggl[\Delta_{t_6}p(x_{2l}-x_{2l-1})dt_6. \biggl(p_{\Delta_{t_5}}(x_{2r}-x_{2l})\nn\\
&&
\qquad\quad\quad\qquad\quad\quad\qquad\quad\qquad\quad\quad\qquad\quad\quad\quad+p_{\Delta_{t_5}}(x_{2r}-x_{2l-1})\biggr)dt_5.dL_{t_4}^{x_{2r}}\nn\\
&&
\qquad\quad-\Delta_{t_6}p(x_{2l}-x_{2l-1})dt_6. \biggl(p_{\Delta_{t_5}}(x_{2r-1}-x_{2l})+p_{\Delta_{t_5}}(x_{2r-1}-x_{2l-1})\biggr)dt_5.dL_{t_4}^{x_{2r-1}}\biggr]\nn\\
&&
\qquad\quad\quad\biggl(dL_{t_3}^{x_{2r}}-dL_{t_3}^{x_{2r-1}}\biggr)\Biggr\}.\biggl(dL_{t_2}^{x_{2l-1}}-dL_{t_2}^{x_{2l-2}}\biggr).\biggl(dL_{t_1}^{x_{2r-1}}-dL_{t_1}^{x_{2r-2}}\biggr)\nn\\
&&
=E\int_{0}^{t}\int_{t_1}^{t}\Biggl\{\int_{t_2}^{t}\int_{0}^{t}\int_{0}^{t}\int_{0}^{t}\Delta_{t_6}p(x_{2l}-x_{2l-1})dt_6. \biggl[\biggl(p_{\Delta_{t_5}}(x_{2r}-x_{2l})\nn\\
&&
\qquad\quad\quad+p_{\Delta_{t_5}}(x_{2r}-x_{2l-1})\biggr)dt_5.\biggl(p_{\Delta_{t_4}}(0)-p_{\Delta_{t_4}}(x_{2r-1}-x_{2r})\biggr)dt_4\nn\\
&&
\qquad\quad\quad- \biggl(p_{\Delta_{t_5}}(x_{2r-1}-x_{2l})+p_{\Delta_{t_5}}(x_{2r-1}-x_{2l-1})\biggr)dt_5.\biggl(p_{\Delta_{t_4}}(x_{2r-1}-x_{2r})-p_{\Delta_{t_4}}(0)\biggr)dt_4\biggr]\nn\\
&&
\qquad\quad\biggl(dL_{t_3}^{x_{2r}}-dL_{t_3}^{x_{2r-1}}\biggr)\Biggr\}\biggl(dL_{t_2}^{x_{2l-1}}-dL_{t_2}^{x_{2l-2}}\biggr).\biggl(dL_{t_1}^{x_{2r-1}}-dL_{t_1}^{x_{2r-2}}\biggr)\nn\\
&&
=E\int_{0}^{t}\int_{t_1}^{t}\Biggl[\int_{0}^{t}\int_{0}^{t}\int_{0}^{t}\int_{0}^{t}\Delta_{t_6}p(x_{2l}-x_{2l-1})dt_6. \biggl(p_{\Delta_{t_5}}(x_{2r}-x_{2l})\nn\\
&&
\qquad\qquad+p_{\Delta_{t_5}}(x_{2r}-x_{2l-1})+ p_{\Delta_{t_5}}(x_{2r-1}-x_{2l})+p_{\Delta_{t_5}}(x_{2r-1}-x_{2l-1})\biggr)dt_5\nn\\
&&
\qquad\quad\qquad.\Delta_{t_4}p(x_{2r-1}-x_{2r})dt_4.\biggl(dL_{t_3}^{x_{2r}}-dL_{t_3}^{x_{2r-1}}\biggr)\Biggr]\biggl(dL_{t_2}^{x_{2l-1}}-dL_{t_2}^{x_{2l-2}}\biggr).\biggl(dL_{t_1}^{x_{2r-1}}-dL_{t_1}^{x_{2r-2}}\biggr)\nn\\
&&
=E\int_{0}^{t}\int_{t_1}^{t}\Biggl[\int_{0}^{t}\int_{0}^{t}\int_{0}^{t}\int_{0}^{t}\Delta_{t_6}p(x_{2l}-x_{2l-1})dt_6.\biggl(p_{\Delta_{t_5}}(x_{2r}-x_{2l})\nn\\
&&
\quad\qquad+p_{\Delta_{t_5}}(x_{2r}-x_{2l-1}) 
+ p_{\Delta_{t_5}}(x_{2r-1}-x_{2l})+p_{\Delta_{t_5}}(x_{2r-1}-x_{2l-1})\biggr)dt_5\nn\\
&&
\quad\qquad\quad.\Delta_{t_4}p(x_{2r-1}-x_{2r})dt_4
.\biggl(p_{\Delta_{t_3}}(x_{2l-1}-x_{2r})-p_{\Delta_{t_3}}(x_{2l-2}-x_{2r})\nn\\
&&
\quad\qquad\quad-p_{\Delta_{t_3}}(x_{2l-1}-x_{2r-1})+p_{\Delta_{t_3}}(x_{2l-2}-x_{2r-1})\biggr)dt_3\biggl(dL_{t_2}^{x_{2l-1}}-dL_{t_2}^{x_{2l-2}}\biggr)\Biggr]\nn\\
&&
\qquad\quad.\biggl(dL_{t_1}^{x_{2r-1}}-dL_{t_1}^{x_{2r-2}}\biggr)\nn\\
\end{eqnarray*}
\begin{eqnarray}\nn\\\label{eq: importantequal01}
&&
=E\int_{0}^{t}\int_{0}^{t}\int_{0}^{t}\int_{0}^{t}\int_{0}^{t}\int_{0}^{t}\Delta_{t_6}p(x_{2l}-x_{2l-1})dt_6. \biggl(p_{\Delta_{t_5}}(x_{2l}-x_{2r})\nn\\
&&
\qquad\qquad\qquad\qquad\qquad\qquad\qquad+p_{\Delta_{t_5}}(x_{2l-1}-x_{2r})+ p_{\Delta_{t_5}}(x_{2l}-x_{2r-1})\nn\\
&&
\qquad\qquad\qquad\qquad\qquad\qquad+p_{\Delta_{t_5}}(x_{2l-1}-x_{2r-1})\biggr) dt_5.\Delta_{t_4}p(x_{2r}-x_{2r-1})dt_4\nn\\
&&
\qquad\qquad\qquad\qquad.\Biggl(p_{\Delta_{t_3}}(x_{2l-1}-x_{2r})-p_{\Delta_{t_3}}(x_{2l-2}-x_{2r})\nn\\
&&
\qquad\qquad\qquad\qquad\qquad\qquad+p_{\Delta_{t_3}}(x_{2l-2}-x_{2r-1})-p_{\Delta_{t_3}}(x_{2l-1}-x_{2r-1})\Biggr)dt_3\nn\\
&&
\qquad\qquad\qquad\qquad.\Biggl(p_{\Delta_{t_2}}(x_{2r-1}-x_{2l-1})-p_{\Delta_{t_2}}(x_{2r-2}-x_{2l-1})\nn\\
&&
\qquad\qquad\qquad\qquad\qquad\qquad +p_{\Delta_{t_2}}(x_{2r-2}-x_{2l-2})-p_{\Delta_{t_2}}(x_{2r-1}-x_{2l-2})\Biggr)dt_2\nn\\
&&
\qquad\qquad\qquad\qquad.\biggl(p_{{t_1}}(x_{2r-1})-p_{{t_1}}(x_{2r-2})\biggr)dt_1.
\end{eqnarray}
By the first inequality of (10.173) in \cite{Marcusr2006}, that is $p_t(x)\leq p_t(0)$. The last step of the (\ref{eq: importantequal01}) becomes
\begin{eqnarray*}\nn
&&
\quad\biggl|E\int_{0}^{t}\int_{0}^{t}\int_{0}^{t}\int_{0}^{t}\int_{0}^{t}\int_{0}^{t}\Delta_{t_6}p(x_{2l}-x_{2l-1})dt_6. \biggl(p_{\Delta_{t_5}}(x_{2l}-x_{2r})\nn\\
&&
\qquad\qquad\qquad\qquad\qquad\qquad\qquad+p_{\Delta_{t_5}}(x_{2l-1}-x_{2r})+ p_{\Delta_{t_5}}(x_{2l}-x_{2r-1})\nn\\
&&
\qquad\qquad\qquad\qquad\qquad\qquad+p_{\Delta_{t_5}}(x_{2l-1}-x_{2r-1})\biggr) dt_5.\Delta_{t_4}p(x_{2r}-x_{2r-1})dt_4\nn\\
&&
\qquad\qquad\qquad\qquad.\biggl(p_{\Delta_{t_3}}(x_{2l-1}-x_{2r})-p_{\Delta_{t_3}}(x_{2l-2}-x_{2r})\nn\\
&&
\qquad\qquad\qquad\qquad\qquad\qquad+p_{\Delta_{t_3}}(x_{2l-2}-x_{2r-1})-p_{\Delta_{t_3}}(x_{2l-1}-x_{2r-1})\biggr)dt_3\nn\\
&&
\qquad\qquad\qquad\qquad.\biggl(p_{\Delta_{t_2}}(x_{2r-1}-x_{2l-1})-p_{\Delta_{t_2}}(x_{2r-2}-x_{2l-1})\nn\\
&&
\qquad\qquad\qquad\qquad\qquad\qquad +p_{\Delta_{t_2}}(x_{2r-2}-x_{2l-2})-p_{\Delta_{t_2}}(x_{2r-1}-x_{2l-2})\biggr)dt_2\nn\\
&&
\qquad\qquad\qquad\qquad.\biggl(p_{{t_1}}(x_{2r-1})-p_{{t_1}}(x_{2r-2})\biggr)dt_1\biggr|\nn\\
&&
\leq E\int_{0}^{t}\biggl|\int_{0}^{t}\int_{0}^{t}\int_{0}^{t}\int_{0}^{t}\int_{0}^{t}\Delta_{t_6}p(x_{2l}-x_{2l-1})dt_6. \biggl(p_{\Delta_{t_5}}(x_{2l}-x_{2r})\nn\\
&&
\qquad\qquad\qquad\qquad\qquad\qquad\qquad+p_{\Delta_{t_5}}(x_{2l-1}-x_{2r})+ p_{\Delta_{t_5}}(x_{2l}-x_{2r-1})\nn\\
&&
\qquad\qquad\qquad\qquad\qquad\qquad+p_{\Delta_{t_5}}(x_{2l-1}-x_{2r-1})\biggr) dt_5.\Delta_{t_4}p(x_{2r}-x_{2r-1})dt_4\nn\\
&&
\qquad\qquad\qquad\qquad.\biggl(p_{\Delta_{t_3}}(x_{2l-1}-x_{2r})-p_{\Delta_{t_3}}(x_{2l-2}-x_{2r})\nn\\
&&
\qquad\qquad\qquad\qquad\qquad\qquad+p_{\Delta_{t_3}}(x_{2l-2}-x_{2r-1})-p_{\Delta_{t_3}}(x_{2l-1}-x_{2r-1})\biggr)dt_3\nn\\
\end{eqnarray*}
\begin{eqnarray}\label{eq:importantequal}\nn
&&
\qquad\qquad\qquad\qquad.\biggl(p_{\Delta_{t_2}}(x_{2r-1}-x_{2l-1})-p_{\Delta_{t_2}}(x_{2r-2}-x_{2l-1})\nn\\
&&
\qquad\qquad\qquad\qquad\qquad\qquad +p_{\Delta_{t_2}}(x_{2r-2}-x_{2l-2})-p_{\Delta_{t_2}}(x_{2r-1}-x_{2l-2})\biggr)dt_2\biggr|\nn\\
&&
\qquad\qquad\qquad\qquad.\biggl(\Delta_{t_1}p(x_{2r-2})\biggr)dt_1\nn\\
\end{eqnarray}
where $\Delta_{t}p(x)$ terms are defined as $\Delta_{t}p(x)=p_{t}(0)-p_{t}(x)$. The estimation of the term $\int_{0}^{t}\Delta_{t_6}p(.)ds$ in (\ref{eq:importantequal}) is given by
\begin{eqnarray}&&
\int_{0}^{t}\Delta_{t_6}p(x_{2l}-x_{2l-1})ds\leq \int_{0}^{\infty}\Delta_{t_6}p(x_{2l}-x_{2l-1}) ds\nn\\
&&
\qquad\qquad\,\,\qquad\qquad\qquad=\displaystyle\lim_{\alpha\to 0}(u^{\alpha}(0)-u^{\alpha}(x_{2l}-x_{2l-1}))\nn\\ 
&&
\qquad\qquad\,\,\qquad\qquad\qquad=c_{\alpha}\biggl|\frac{2l-(2l-1)}{2^{m+1}}\biggr|^{\alpha-1}\nn\\
&&
\qquad\qquad\,\,\qquad\qquad\qquad=\frac{c_{\alpha}}{2}\biggl|\frac{1}{2^{m+1}}\biggr|^{\alpha-1},
\end{eqnarray}
where we have used (4.90), (4.95)\footnote{(4.90) together with (4.95) states that $\lim_{\alpha\to 0}(u^{\alpha}(0)-u^{\alpha}(x))=\phi(x)=\frac{c_\alpha}{2}|x|^{\alpha-1}$ where $c_{\alpha}=\frac{2}{\pi}\int_{0}^{\infty}\frac{1-cost}{t^{\alpha}}dt$.} from \cite{Marcusr2006} and Lemma \ref{speciallemma} and $c_{\alpha}=\frac{2}{\pi}\int_{0}^{\infty}\frac{1-cost}{t^{\alpha}}dt$.\,\,The term $\int_{0}^{t}\Delta_{t_4}p(.)ds$ in (\ref{eq:importantequal}) can be estimated similarly. While the estimation of the term $\int_{0}^{t}\Delta_{t_1}p(x_{2r-2})dt_1$ is 
\begin{equation}
\int_{0}^{t}\Delta_{t_1}p(x_{2r-2})dt_1\leq c_{\alpha}\biggl|\frac{2r-2}{2^{m+1}}\biggr|^{\alpha-1}\leq c_{\alpha},
\end{equation}
where $\Delta_{t_1}p(x_{2r-2})=p_{t_1}(0)-p_{t_1}(x_{2r-2})$, $r\leq 2^{m-n}k$ and $k=1,\dots, 2^n.$


By (10.173) in \cite{Marcusr2006}, that is $p_t(x)\leq p_t(0)$ and $p_s(0)=d_{\alpha}s^{\frac{-1}{\alpha}}$, we can obtain the following estimation
\[
\int_{0}^t p_{\Delta_{t_5}}(.)ds\leq \int_{0}^t p_s(0)ds\leq \int_{0}^t d_\alpha s^{-1/\alpha}ds\leq \frac{\alpha d_{\alpha}}{\alpha-1} t^{\frac{\alpha-1}{\alpha}}.\]
for some constant $d_\alpha$. The positivity of the above four estimators follows from (10.173) in \cite{Marcusr2006} and $p_s(.)$ is the transitional probability density. 


 


By Lemma 3.17 and footnote 2, we have
\begin{eqnarray*}\nn
&&
\quad\int_{0}^{t}\Biggl(p_{\Delta_{t_3}}(x_{2l-1}-x_{2r})-p_{\Delta_{t_3}}(x_{2l-2}-x_{2r})\nn\\
&&
\qquad\qquad\qquad\qquad\qquad\qquad+p_{\Delta_{t_3}}(x_{2l-2}-x_{2r-1})-p_{\Delta_{t_3}}(x_{2l-1}-x_{2r-1})\Biggr)dt_3\nn\\
&&
\leq\displaystyle\lim_{\alpha\to 0}\biggl(u^{\alpha}(x_{2l-1}-x_{2r})-u^{\alpha}(0)+u^{\alpha}(0)-u^{\alpha}(x_{2l-2}-x_{2r})\nn\\
&&
\qquad\qquad\qquad\qquad\qquad\qquad+u^{\alpha}(x_{2l-2}-x_{2r-1})-u^{\alpha}(0)+u^{\alpha}(0)-u^{\alpha}(x_{2l-1}-x_{2r-1})\biggr)\nn\\
&&
=\phi(x_{2l-2}-x_{2r})-\phi(x_{2l-1}-x_{2r})+\phi(x_{2l-1}-x_{2r-1})-\phi(x_{2l-2}-x_{2r-1})\nn\\
\end{eqnarray*}
\begin{eqnarray}\nn\label{eq: remainestimation}
&&
=c_\alpha\biggl(
\bigl|\frac{2l-2-2r}{2^{m+1}}\bigr|^{\alpha-1}-\bigl|\frac{2l-1-2r}{2^{m+1}}\bigr|^{\alpha-1}+\bigl|\frac{2l-2r}{2^{m+1}}\bigr|^{\alpha-1}\nn\\
&&
\qquad-\bigl|\frac{2l-1-2r}{2^{m+1}}\bigr|^{\alpha-1}\biggr)\nn\\
&&
\leq c_\alpha \biggl(
\bigl|\frac{2l-1-2r-2l+2+2r}{2^{m+1}}\bigr|^{\alpha-1}+\bigl|\frac{2l-2r-2l+1+2r}{2^{m+1}}\bigr|^{\alpha-1}\biggr)\nn\\
&&
\leq c_\alpha \bigl(\frac{1}{2^m}\bigr)^{\alpha-1},
\end{eqnarray}
where the first inequality of (\ref{eq: remainestimation}) follows from the fact that $|x|^{\alpha-1}-|y|^{\alpha-1}\leq |x-y|^{\alpha-1}$ for $0<\alpha-1<1$. 

We can see that (\ref{eq: remainestimation}) is bounded from below by using the inequality $|x|^{\alpha-1}-|y|^{\alpha-1}\leq |x-y|^{\alpha-1}$ again and increasing property of $|.|^{\alpha-1}$ with $0<\alpha-1<1$ 
\begin{eqnarray}&&
\quad\bigl|\frac{2l-2-2r}{2^{m+1}}\bigr|^{\alpha-1}-\bigl|\frac{2l-1-2r}{2^{m+1}}\bigr|^{\alpha-1}+\bigl|\frac{2l-2r}{2^{m+1}}\bigr|^{\alpha-1}-\bigl|\frac{2l-1-2r}{2^{m+1}}\bigr|^{\alpha-1}\nn\\
&&
\geq \bigl|\frac{2l-2r-1}{2^{m+1}}\bigr|^{\alpha-1}-\bigl|\frac{1}{2^{m+1}}\bigr|^{\alpha-1}-\bigl|\frac{2l-1-2r}{2^{m+1}}\bigr|^{\alpha-1}+\bigl|\frac{2l-2r}{2^{m+1}}\bigr|^{\alpha-1}\nn\\
&&
\qquad\quad-\bigl|\frac{2l-1-2r}{2^{m+1}}\bigr|^{\alpha-1}\nn\\
&&
\geq -\bigl|\frac{1}{2^{m+1}}\bigr|^{\alpha-1}.
\end{eqnarray}
Similarly, we can prove that
\begin{eqnarray}&&
\quad\int_{0}^{t}\Biggl(p_{\Delta_{t_2}}(x_{2r-1}-x_{2l-1})-p_{\Delta_{t_2}}(x_{2r-2}-x_{2l-1})\nn\\
&&
\qquad\qquad\qquad\qquad\qquad\qquad +p_{\Delta_{t_2}}(x_{2r-2}-x_{2l-2})-p_{\Delta_{t_2}}(x_{2r-1}-x_{2l-2})\Biggr)dt_2\nn\\
&&
\leq c_\alpha \bigl(\frac{1}{2^m}\bigr)^{\alpha-1},
\end{eqnarray}
and it is bounded from below by the following
\begin{eqnarray}&&
\quad\bigl|\frac{2l-2r}{2^{m+1}}\bigr|^{\alpha-1}-\bigl|\frac{2l-2r+1}{2^{m+1}}\bigr|^{\alpha-1}+\bigl|\frac{2l-2r}{2^{m+1}}\bigr|^{\alpha-1}-\bigl|\frac{2l-1-2r}{2^{m+1}}\bigr|^{\alpha-1}\nn\\
&&
\geq \bigl|\frac{2l-2r}{2^{m+1}}\bigr|^{\alpha-1}-\bigl|\frac{2l-2r}{2^{m+1}}\bigr|^{\alpha-1}-\bigl|\frac{1}{2^{m+1}}\bigr|^{\alpha-1}+\bigl|\frac{2l-2r}{2^{m+1}}\bigr|^{\alpha-1}\nn\\
&&
\qquad\quad-\bigl|\frac{2l-1-2r}{2^{m+1}}\bigr|^{\alpha-1}\nn\\
&&
\geq -\bigl|\frac{1}{2^{m+1}}\bigr|^{\alpha-1}.
\end{eqnarray}
Therefore, the estimation of the fifth term in (\ref{longequation}) is
\begin{eqnarray}\nn
&&
\quad\displaystyle\sum_{r< l}\biggl|E\bigl(L_t^{x_{2r}^{m+1}}-L_t^{x_{2r-1}^{m+1}}\bigr)^2\bigl(L_t^{x_{2l}^{m+1}}-L_t^{x_{2l-1}^{m+1}}\bigr)^2\bigl(L_t^{x_{2l-1}^{m+1}}-L_t^{x_{2l-2}^{m+1}}\bigr)\bigl(L_t^{x_{2r-1}^{m+1}}-L_t^{x_{2r-2}^{m+1}}\bigr)\biggr|\nn\\
&&
\leq c\, t^{\frac{\alpha-1}{\alpha}}\biggl|\frac{1}{2^{m+1}}\biggr|^{4\alpha-6}\biggl(\frac{1}{2^{n}}\biggr)^2,
\end{eqnarray}
where c is a generic constant.
 
\noindent The upper bound for the sixth term in (\ref{longequation}) is
\begin{eqnarray}\nn
&&
\quad\displaystyle\sum_{r< l}\biggl|E(g(x_{2r-1}^{m+1})-g(x_{2r-2}^{m+1})\bigr)\bigl(g(x_{2l-1}^{m+1})-g(x_{2l-2}^{m+1})\bigr)\bigl(g(x_{2l}^{m+1})-g(x_{2l-1}^{m+1})\bigr)^2\nn\\
&&
\quad\quad\quad\bigl(g(x_{2r}^{m+1})-g(x_{2r-1}^{m+1})\bigr)^2\biggr|\nn\\
&&
\leq c\,\frac{1}{2^n}\bigl(\frac{1}{2^m}\bigr)^{6h-1}.
\end{eqnarray}

Therefore, we have
\begin{eqnarray}\label{eq: 4.16}\nn
&&
\quad E\displaystyle\biggl|\sum_{l}\Delta_{2l-1}^{m+1}Z\otimes\Delta_{2l}^{m+1}Z\otimes\Delta_{2l}^{m+1}Z\biggr|^2\nn\\
&&
\leq C\,\biggl[\bigl(\frac{1}{2^n}\bigr)^{1+2h}\bigl(\frac{1}{2^m}\bigr)^{2h+\alpha-2}+t^{\frac{2(\alpha-1)}{\alpha}}\bigl(\frac{1}{2^n}\bigr)^{\alpha}\bigl(\frac{1}{2^m}\bigr)^{2h+\alpha-2}+t^{\frac{2(\alpha-1)}{\alpha}}\bigl(\frac{1}{2^n}\bigr)\bigl(\frac{1}{2^m}\bigr)^{2h+2\alpha-3}\nn\\
&&
\quad\qquad+\bigl(\frac{1}{2^n}\bigr)\bigl(\frac{1}{2^m}\bigr)^{4h+\alpha-2}+\bigl(\frac{1}{2^n}\bigr)\bigl(\frac{1}{2^m}\bigr)^{6h-1}+t^{\frac{2(\alpha-1)}{\alpha}}\bigl(\frac{1}{2^n}\bigr)^2(\frac{1}{2^m})^{3\alpha-5}\biggr].
\end{eqnarray}
Hence, we have proved the following proposition.
\begin{proposition}\label{ThirdLevelVariationProposition1}
For a continuous path $Z_x$ which satisfies (\ref{eq: 4.2}), then for the case $m\geq n$
\begin{eqnarray}\label{eq: 4.17}
&&
\quad\sum_{k=1}^{2^n}E\biggl|{\bf{Z}}(m+1)_{x_{k-1}^n, x_k^n}^3-{\bf{Z}}(m)_{x_{k-1}^n, x_k^n}^3\biggr|^{\frac{\theta}{3}}\nn\\
&&
\leq C\biggl[\bigl(\frac{1}{2^n}\bigr)^{\frac{1+2h}{6}\theta-1}\bigl(\frac{1}{2^m}\bigr)^{\frac{2h+\alpha-2}{6}\theta}+\bigl(\frac{1}{2^n}\bigr)^{\frac{\alpha}{6}\theta-1}\bigl(\frac{1}{2^m}\bigr)^{\frac{2h+\alpha-2}{6}\theta}+ \bigl(\frac{1}{2^n}\bigr)^{\frac{1}{6}\theta-1}\bigl(\frac{1}{2^m}\bigr)^{\frac{2h+2\alpha-3}{6}\theta} \nn\\
&&
\qquad+\bigl(\frac{1}{2^n}\bigr)^{\frac{\theta}{6}-1}\bigl(\frac{1}{2^m}\bigr)^{\frac{4h+\alpha-2}{6}\theta}+\bigl(\frac{1}{2^n}\bigr)^{\frac{\theta}{6}-1}\bigl(\frac{1}{2^m}\bigr)^{\frac{6h-1}{6}\theta}+  \bigl(\frac{1}{2^n}\bigr)^{\frac{2}{6}\theta-1}\bigl(\frac{1}{2^m}\bigr)^{\frac{3\alpha-5}{6}\theta}\biggr],
\end{eqnarray}\end{proposition}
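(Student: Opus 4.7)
The plan is to combine the decomposition (\ref{eq: 4.12}) of ${\bf Z}(m+1)^3_{x_{k-1}^n,x_k^n}-{\bf Z}(m)^3_{x_{k-1}^n,x_k^n}$ into the five sums $A_1,\ldots,A_5$ with Jensen's inequality (\ref{proposition1})
\[
E\bigl|{\bf Z}(m+1)_{x_{k-1}^n, x_k^n}^3-{\bf Z}(m)_{x_{k-1}^n, x_k^n}^3\bigr|^{\theta/3}\leq \Bigl(E\bigl|{\bf Z}(m+1)_{x_{k-1}^n, x_k^n}^3-{\bf Z}(m)_{x_{k-1}^n, x_k^n}^3\bigr|^2\Bigr)^{\theta/6},
\]
so that it suffices to estimate $E|A_i|^2$ for each $i=1,\dots,5$, raise to the $\theta/6$-th power, and sum over the $2^n$ sub-intervals of level $n$. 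Expanding $(A_1+\cdots+A_5)^2$ produces the squared diagonal terms and cross terms, and by Cauchy--Schwarz the cross terms are controlled once each diagonal $E|A_i|^2$ is.

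For each $A_i$ I would expand $|A_i|^2$ as a sum over index pairs $(r,l)$ and split the diagonal ($r=l$) and off-diagonal ($r<l$) contributions, exactly as carried out for $A_1$ and $A_3$ in the body of this section. The resulting expectations factor into $g$-increments and local-time increments. For the $g$-factors I would invoke the control function $\textbf{w}_1$ directly on the diagonal, and for the off-diagonal pairs use the algebraic estimate (\ref{consumed}), which produces the summable factor $(1/(l-r))^{2-2h}$. For the local-time factors I would use Lemma \ref{key lemma for fifth term} (with Cauchy--Schwarz) on single-interval moments together with the covariance identity (\ref{eq: 4.9}) and the concavity and monotonicity of $\sigma^2$ on overlapping and adjacent intervals; this gives the building blocks of order $c(1/2^n)^{\alpha-1}$ or $c(1/2^m)^{\alpha-1}$ already exploited in (\ref{eq: 4.13})--(\ref{eq: 4.15}).

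The principal obstacle is the sixth-order local-time moment appearing in $E|A_3|^2$ (and by symmetry in $E|A_4|^2$ and $E|A_5|^2$), namely
\[
E\bigl(L_t^{x_{2r}^{m+1}}-L_t^{x_{2r-1}^{m+1}}\bigr)^2\bigl(L_t^{x_{2l}^{m+1}}-L_t^{x_{2l-1}^{m+1}}\bigr)^2\bigl(L_t^{x_{2l-1}^{m+1}}-L_t^{x_{2l-2}^{m+1}}\bigr)\bigl(L_t^{x_{2r-1}^{m+1}}-L_t^{x_{2r-2}^{m+1}}\bigr),
\]
for which the Gaussian-flavoured covariance identity (\ref{eq: 4.9}) is no longer sufficient. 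My plan is to expand it via the Rosen identity (\ref{Rosen1}) as a finite sum over permutations of iterated integrals $\int_0^t\int_{t_1}^t\cdots\int_{t_5}^t\prod_j dL_{t_j}^{y_j}$, then peel off the innermost integral by repeated application of Lemma \ref{lemmaexpectation}, which converts each innermost differential into an expectation of the remaining iterated integral started from the corresponding spatial point. After six iterations this reduces a representative permutation to a six-fold time integral of a product of transition-density differences $p_t(\cdot)$. Bounding each diagonal factor by $\int_0^t p_s(0)\,ds\leq \frac{\alpha d_\alpha}{\alpha-1}t^{(\alpha-1)/\alpha}$ and each difference factor by the potential-gap identity $\int_0^\infty(p_s(0)-p_s(x))\,ds=(c_\alpha/2)|x|^{\alpha-1}$ from footnote~2 yields a representative contribution of order $c\,t^{(\alpha-1)/\alpha}(1/2^m)^{4\alpha-6}(1/2^n)^2$; since the permutation sum has only finitely many terms, all estimated the same way, this controls the full sixth-order moment.

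Assembling the diagonal and off-diagonal contributions as listed in (\ref{longequation}) delivers the bound (\ref{eq: 4.16}) for $E|A_3|^2$; the other $E|A_i|^2$ are estimated analogously and produce at worst the same six exponent patterns. Taking the $\theta/6$-th power through Jensen and then summing over the $2^n$ intervals $[x_{k-1}^n,x_k^n]$, each of which contributes a bound independent of $k$, multiplies by $2^n$ and yields exactly the six terms on the right-hand side of (\ref{eq: 4.17}). The main technical difficulty is therefore the sixth-order moment estimate, which is the one place where the full Dynkin-isomorphism and transition-density machinery must replace the covariance-only arguments that were sufficient for the first- and second-level paths.
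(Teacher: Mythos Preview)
Your proposal is correct and follows essentially the same route as the paper: decompose via (\ref{eq: 4.12}) into $A_1,\dots,A_5$, apply Jensen as in (\ref{proposition1}), estimate each $E|A_i|^2$ by separating diagonal and off-diagonal $(r,l)$-pairs using (\ref{consumed}), Lemma~\ref{key lemma for fifth term} and the $\sigma^2$-concavity bounds, and handle the sixth-order local-time moment by the Rosen identity (\ref{Rosen1}) together with repeated application of Lemma~\ref{lemmaexpectation} and the potential-gap bound $\int_0^\infty(p_s(0)-p_s(x))\,ds=\tfrac{c_\alpha}{2}|x|^{\alpha-1}$, exactly as in the derivation leading to (\ref{eq: 4.16}). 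The only cosmetic point is that what you call the ``Dynkin-isomorphism machinery'' is in fact the Markov-property/additive-functional argument of Lemma~\ref{lemmaexpectation} rather than the isomorphism theorem itself; the actual steps you describe match the paper's.
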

\noindent where $C$ is a generic constant depends on $\theta, h$, $\textbf{w}_1(x',x'')$, and $c$ in (\ref{eq: 4.2}).

\begin{theorem}\label{MainthirdlevelTheorem}
Let  $\alpha\in(\frac{3}{2},2)$, $\frac{2}{3-\alpha}\leq q<4$.
Then for a continuous path $Z_x$ satisfy (\ref{eq: 4.2}), there exists a unique ${\bf Z^3}$ and a simplex $\Delta$ taking values in $(\mathbb{R}^2)^{\otimes 3}$ such that
\begin{equation}
\displaystyle\sup_{D}\biggl(\displaystyle\sum_{l}\bigl|{\bf Z}(m)_{x_{l-1},x_l}^3-{\bf Z}_{x_{l-1},x_l}^3\bigr|^{\frac{\theta}{3}}\biggr)^{\frac{3}{\theta}}\to 0,
\end{equation}
both almost surely and in $L^1(\Omega,\mathcal{F},\mathcal{P})$ as $m\to\infty$, for some $\theta$ such that $\frac{4}{2h+\alpha-1}<\theta<4.$
\end{theorem}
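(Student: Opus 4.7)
The plan is to mimic the proof of Theorem \ref{them 3.4.4} for the second level path, but now using the five‐term $\theta$-variation formula for the third level path stated at the start of Section 3.4 together with Propositions \ref{ThirdLevelVariationProposition} and \ref{ThirdLevelVariationProposition1}. First I will take expectation on both sides of the five-term inequality, apply Cauchy--Schwarz (or H\"older) to the three cross-product terms $C_3,C_4,C_5$ so as to separate a factor containing the differences ${\bf Z}(m+1)^i-{\bf Z}(m)^i$ (for $i=1,2$) from a factor containing only $|{\bf Z}(m+j)^i|^{\theta/i}$ with $j\in\{0,1\}$. The second factors are uniformly bounded in $m$ by (the analogue of) Proposition \ref{FirstLevelProposition} and its second-level counterpart inside the proof of Theorem \ref{them 3.4.4}, so only the first factors need to be controlled.

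Next I will split each remaining sum $\sum_{n=1}^\infty n^\gamma\sum_{k=1}^{2^n}|\cdot|$ as $\sum_{n\leq m}+\sum_{n>m}$, and insert the relevant bounds: Theorem \ref{FirstLevelMainTheorem} (together with the bound obtained inside its proof) for the first level differences, Propositions \ref{SecondLevelVariationProposition} and \ref{SecondLevelConvergence} for the second level differences, and Propositions \ref{ThirdLevelVariationProposition} and \ref{ThirdLevelVariationProposition1} for the purely third-level term $C_2\sum n^\gamma\sum_k|{\bf Z}(m+1)^3-{\bf Z}(m)^3|^{\theta/3}$. The geometric factors $(1/2^{m+n})^{(h\theta-1)/2}$ coming from the $n>m$ estimates in Propositions \ref{SecondLevelVariationProposition}, \ref{ThirdLevelVariationProposition} will sum in $n$ and leave a factor $(1/2^m)^{(h\theta-1)/2}$, which is summable in $m$ since $h\theta>1$.

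The delicate part is the $m\geq n$ regime. Here each of the six summands in Proposition \ref{ThirdLevelVariationProposition1} has the schematic form $(1/2^n)^{a_i\theta/6-1}(1/2^m)^{b_i\theta/6}$ with exponents $(a_i,b_i)$ taken from the list $\{(1+2h,2h+\alpha-2),(\alpha,2h+\alpha-2),(1,2h+2\alpha-3),(1,4h+\alpha-2),(1,6h-1),(2,3\alpha-5)\}$, and I will need $\sum_{n=1}^{m-1}n^\gamma(1/2^n)^{a_i\theta/6-1}$ to either converge or, if the $n$-exponent is nonpositive, to grow no faster than a power of $1/2^m$ with a deficit that can be absorbed into the $m$-exponent. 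As in the proof of Theorem \ref{them 3.4.4}, this is arranged by introducing an auxiliary $\epsilon>0$ and transferring weight between $n$ and $m$: for each pair $(a_i,b_i)$ I will choose $\epsilon$ with $1-a_i\theta/6<\epsilon<b_i\theta/6$ so that after the exchange the resulting bound is of the form $C\,(1/2^m)^{b_i\theta/6-\epsilon}$ with positive exponent, which is then summable in $m$.

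The constraint $\alpha\in(\tfrac{3}{2},2)$ and the choice $\tfrac{4}{2h+\alpha-1}<\theta<4$ are precisely what makes every $b_i\theta/6-(1-a_i\theta/6)=(a_i+b_i)\theta/6-1$ strictly positive for all six pairs (the binding case being $(a_6,b_6)=(2,3\alpha-5)$, where $(a_6+b_6)\theta/6-1=(3\alpha-3)\theta/6-1>0$ iff $\theta>\tfrac{2}{\alpha-1}$, which is implied by $\theta>\tfrac{4}{2h+\alpha-1}$ since $h=1/\hat q>(\alpha-1)/2$). This verification is what I expect to be the main obstacle, because several of the exponents $a_i$ are strictly less than $6/\theta$, forcing the $\epsilon$-splitting trick. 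Once these six estimates are assembled, one obtains
\begin{equation*}
E\sup_{D}\sum_l\bigl|{\bf Z}(m+1)^3_{x_{l-1},x_l}-{\bf Z}(m)^3_{x_{l-1},x_l}\bigr|^{\theta/3}\leq C\sum_{i}\biggl(\frac{1}{2^m}\biggr)^{\delta_i}
\end{equation*}
with $\delta_i>0$, and summing over $m$ shows that $\{{\bf Z}(m)^3\}$ is Cauchy in $d_{3,\theta}$ both in $L^1$ and, by Borel--Cantelli via Markov's inequality applied to the summable bound, almost surely. Completeness of $C_{0,\theta}(\Delta,T^{([\theta])}(\mathbb{R}^2))$ in the $\theta$-variation distance (Lemma 3.3.3 of \cite{Lyons2002}) then delivers the limit ${\bf Z}^3\in(\mathbb{R}^2)^{\otimes 3}$, and Lemma 3.3.3 of \cite{Lyons2002} again guarantees that ${\bf Z}^3$ itself has finite $\theta/3$-variation. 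Uniqueness is automatic from the multiplicative (Chen) identity inherited in the limit from the smooth approximations ${\bf Z}(m)$. This completes the proposed proof.
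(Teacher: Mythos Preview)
Your proposal is correct and follows essentially the same route as the paper: take expectations in the five-term $\theta$-variation inequality, control the $C_3$--$C_5$ cross terms via the first and second level results, and estimate the $C_2$ term by splitting $n\le m$ versus $n>m$ and inserting Propositions \ref{ThirdLevelVariationProposition} and \ref{ThirdLevelVariationProposition1} with the $\epsilon$-transfer trick. One small slip: since $\hat q\ge p>2/(\alpha-1)$ you actually have $h=1/\hat q<(\alpha-1)/2$, not $>$; this reverses your claim about which pair is binding (pair 6 is in fact the loosest, and the tightest constraint reduces to $h\theta>1$), but it does not affect the conclusion because the implication $\theta>4/(2h+\alpha-1)\Rightarrow\theta>2/(\alpha-1)$ holds precisely when $h\le(\alpha-1)/2$.
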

\begin{proof}From \cite{Lyons2002}, we have
\begin{eqnarray*}\nn
&&
\quad E\displaystyle\sup_{D}\bigl|{\bf Z}(m+1)_{x_{l-1},x_l}^3-{\bf Z}(m)_{x_{l-1},x_l}^3\bigr|^{\frac{\theta}{3}}\nn\\
&&
\leq C_2\sum_{n=1}^{\infty}n^\gamma \sum_{k=1}^{2^n}E\bigl|{\bf Z}(m+1)_{x_{k-1}^n,x_k^n}^3-{\bf Z}(m)_{x_{k-1}^n,x_k^n}^3\bigr|^{\frac{\theta}{3}}\nn\\
&&
\quad + C_3E\biggl(\sum_{n=1}^{\infty}n^\gamma\sum_{k=1}^{2^n}\bigl|{\bf Z}(m+1)_{x_{k-1}^n,x_k^n}^1-{\bf Z}(m)_{x_{k-1}^n,x_k^n}^1\bigr|^{{\theta}}\biggr)^{\frac{1}{3}}\nn\\
&&
\quad\quad\quad \times\biggl(\sum_{n=1}^{\infty}n^\gamma\sum_{k=1}^{2^n}\bigl|{\bf Z}(m+1)_{x_{k-1}^n,x_k^n}^2\bigr|^{\frac{\theta}{2}}+\bigl|{\bf Z}(m)_{x_{k-1}^n,x_k^n}^2\bigr|^{\frac{\theta}{2}}\biggr)^{\frac{2}{3}}\nn\\
\end{eqnarray*}
\begin{eqnarray}\nn
&&
\quad+ C_4E\biggl(\sum_{n=1}^{\infty}n^\gamma\sum_{k=1}^{2^n}\bigl|{\bf Z}(m+1)_{x_{k-1}^n,x_k^n}^2-{\bf Z}(m)_{x_{k-1}^n,x_k^n}^2\bigr|^{\frac{\theta}{2}}\biggr)^{\frac{2}{3}}\nn\\
&&
\quad\quad\quad \times\biggl(\sum_{n=1}^{\infty}n^\gamma\sum_{k=1}^{2^n}\bigl|{\bf Z}(m+1)_{x_{k-1}^n,x_k^n}^1\bigr|^{{\theta}}+\bigl|{\bf Z}(m)_{x_{k-1}^n,x_k^n}^1\bigr|^{{\theta}}\biggr)^{\frac{1}{3}}\nn\\
&&
\quad +C_5E\biggl(\sum_{n=1}^{\infty}n^\gamma\sum_{k=1}^{2^n}\bigl|{\bf Z}(m+1)_{x_{k-1}^n,x_k^n}^1-{\bf Z}(m)_{x_{k-1}^n,x_k^n}^1\bigr|^{\theta}\biggr)^{\frac{1}{3}}\nn\\
&&
\quad\quad\quad \times\biggl(\sum_{n=1}^{\infty}n^\gamma\sum_{k=1}^{2^n}\bigl|{\bf Z}(m+1)_{x_{k-1}^n,x_k^n}^1\bigr|^{{\theta}}+\bigl|{\bf Z}(m)_{x_{k-1}^n,x_k^n}^1\bigr|^{{\theta}}\biggr)^{\frac{2}{3}}.
\end{eqnarray}
By Propositions \ref{ThirdLevelVariationProposition} and \ref{ThirdLevelVariationProposition1}, the estimate of the first term on the r.h.s. of (3.107) is
\begin{eqnarray*}
&&
\quad\sum_{n=1}^{\infty}n^\gamma E\sum_{k=1}^{2^n}\bigl|{\bf Z}(m+1)_{x_{k-1}^n,x_k^n}^3-{\bf Z}(m)_{x_{k-1}^n,x_k^n}^3\bigr|^{\frac{\theta}{3}}\nn\\
&&
\leq C \sum_{n=m}^{\infty}n^\gamma \bigl(\frac{1}{2^{n+m}}\bigr)^{\frac{h\theta-1}{2}}+C\:\sum_{n=1}^{m-1}n^\gamma\biggl[\bigl(\frac{1}{2^n}\bigr)^{\frac{1+2h}{6}\theta-1}\bigl(\frac{1}{2^m}\bigr)^{\frac{2h+\alpha-2}{6}\theta}\nn\\
&&
\qquad+ \bigl(\frac{1}{2^n}\bigr)^{\frac{1}{6}\theta-1}\bigl(\frac{1}{2^m}\bigr)^{\frac{2h+2\alpha-3}{6}\theta} +\bigl(\frac{1}{2^n}\bigr)^{\frac{\theta}{6}-1}\bigl(\frac{1}{2^m}\bigr)^{\frac{4h+\alpha-2}{6}\theta}\nn\\
&&
\qquad+\bigl(\frac{1}{2^n}\bigr)^{\frac{\alpha}{6}\theta-1}\bigl(\frac{1}{2^m}\bigr)^{\frac{2h+\alpha-2}{6}\theta}+\bigl(\frac{1}{2^n}\bigr)^{\frac{\theta}{6}-1}\bigl(\frac{1}{2^m}\bigr)^{\frac{6h-1}{6}\theta}+  \bigl(\frac{1}{2^n}\bigr)^{\frac{2}{6}\theta-1}\bigl(\frac{1}{2^m}\bigr)^{\frac{4\alpha-6}{6}\theta}\biggr]\nn\\
&&
\leq C\:\bigl(\frac{1}{2^{m}}\bigr)^{\frac{h\theta-1}{2}}+C\:\sum_{n=1}^{m-1}n^\gamma\biggl[ \bigl(\frac{1}{2^n}\bigr)^{\frac{2h+1}{6}\theta-1}\bigl(\frac{1}{2^m}\bigr)^{\frac{2h+\alpha-2}{6}\theta} +\bigl(\frac{1}{2^n}\bigr)^{\frac{\alpha}{6}\theta-1}\bigl(\frac{1}{2^m}\bigr)^{\frac{2h+\alpha-2}{6}\theta}  \nn\\
&&
\quad\quad+\bigl(\frac{1}{2^n}\bigr)^{\frac{\theta}{6}-\frac{2}{3}+\epsilon}\bigl(\frac{1}{2^m}\bigr)^{\frac{2h+2\alpha-3}{6}\theta-\frac{1}{3}-\epsilon}+\bigl(\frac{1}{2^n}\bigr)^{\frac{\theta}{6}-\frac{2}{3}+\epsilon}\bigl(\frac{1}{2^m}\bigr)^{\frac{4h+\alpha-2}{6}\theta-\frac{1}{3}-\epsilon}
\nn\\
&&
\quad\quad+\bigl(\frac{1}{2^n}\bigr)^{\frac{\theta}{6}-\frac{2}{3}+\epsilon}\bigl(\frac{1}{2^m}\bigr)^{\frac{6h-1}{6}\theta-\frac{1}{3}-\epsilon}+\bigl(\frac{1}{2^n}\bigr)^{\frac{2\theta}{6}-1}\bigl(\frac{1}{2^m}\bigr)^{\frac{4\alpha-6}{6}\theta}\biggr]\nn\\
&&
:= K_m.
\end{eqnarray*}

For $h>\frac{1}{4}$ and $\alpha>\frac{3}{2}$, one can choose $\theta$ sufficiently close to 4 such that $\frac{2h+1}{6}\theta-1>0,\,\frac{2h+\alpha-2}{6}\theta>0,\,\frac{\alpha}{6}\theta-1>0,\,\frac{2\theta}{6}-1>0,\,\frac{4\alpha-6}{6}\theta>0.$ 
We choose $\epsilon>0$ with $\frac{2}{3}-\frac{\theta}{6}<\epsilon<min\{\frac{2h+2\alpha-3}{6}\theta-\frac{1}{3}, \frac{4h+\alpha-2}{6}\theta-\frac{1}{3}, \frac{6h-1}{6}\theta-\frac{1}{3}\}= \frac{6h-1}{6}\theta-\frac{1}{3}$, hence
\begin{eqnarray}\label{eq: 4.23}
&&
K_m \leq C\:\biggl[\bigl(\frac{1}{2^{m}}\bigr)^{\frac{h\theta-1}{2}}+\bigl(\frac{1}{2^m}\bigr)^{\frac{2h+\alpha-2}{6}\theta} +\bigl(\frac{1}{2^m}\bigr)^{\frac{2h+2\alpha-3}{6}\theta-\frac{1}{3}-\epsilon} +
\bigl(\frac{1}{2^m}\bigr)^{\frac{4h+\alpha-2}{6}\theta-\frac{1}{3}-\epsilon}\nn\\
&&
\quad\qquad+\bigl(\frac{1}{2^m}\bigr)^{\frac{6h-1}{6}\theta-\frac{1}{3}-\epsilon}
+\bigl(\frac{1}{2^m}\bigr)^{\frac{4\alpha-6}{6}\theta}\biggr].
\end{eqnarray}
Therefore, if we sum up all m, we would have $\displaystyle\sum_m K_m<\infty$. This shows that $\bigl({\bf Z}(m)^3\bigr)_{m\in N}\in (\mathbb{R}^2)^{\otimes 3}$ is a Cauchy sequence in $\theta$-variation distance. In other words, it has a limit as $m\to\infty$, denote it by ${\bf Z}^3\in(\mathbb{R}^2)^{\otimes 3}$. By Lemma 3.3.3 in \cite{Lyons2002}, ${\bf Z}^3$ has finte $\theta$-variation.\,\,Together with the convergence result of the second level path and first level path, we complete the proof of the theorem. \end{proof}

Based on Chapter 5 in \cite{Lyons2002}, for any Lipschitz one form in the sense of Stein $\widehat{f}: \mathbb{R}^2\to L(\mathbb{R}^2, \mathbb{R}^2),$ the almost rough path $Y = (1, Y_{a,b}^1, Y_{a, b}^2, Y_{a, b}^3)$ is given  by
\begin{align*}
Y_{a,b}^1&=\widehat{f}({\bf Z}_a){\bf Z}_{a,b}^1+\widehat{f}^2({\bf Z}_a){\bf Z}_{a,b}^2 +\widehat{f}^3({\bf Z}_a){\bf Z}_{a,b}^3,\\
Y_{a,b}^2&=(\widehat{f}({\bf Z}_a)\otimes\widehat{f}({\bf Z}_a)){\bf Z}_{a,b}^2+ (\widehat{f}({\bf Z}_a)\otimes\widehat{f}^2({\bf Z}_a))\displaystyle\int_{s<u_1<u_2<t}d{\bf Z}_{s,u_1}^1\otimes d{\bf Z}_{s,u_2}^2,\\
&\quad+ (\widehat{f}^2({\bf Z}_a)\otimes\widehat{f}({\bf Z}_a))\displaystyle\int_{s<u_1<u_2<t}d{\bf Z}_{s,u_1}^2\otimes d{\bf Z}_{s,u_2}^1,\\
Y_{a,b}^3&= (\widehat{f}({\bf Z}_a)\otimes\widehat{f}({\bf Z}_a)\otimes\widehat{f}({\bf Z}_a)){\bf Z}_{a,b}^3.
\end{align*}

Consider one form $\widehat{f}: \mathbb{R}^2\to L(\mathbb{R}^2, \mathbb{R}^2)$ defined as
\[
\widehat{f}(z)\xi = (v, yv),\]
where $z = (x,y)$ and $\xi=(v,w).$ For the general case, it is defined as
\[
\widehat{f}^{k+1}(v,w){(\bf v)}=\biggl(0, \sum_{j}d^k \widehat{f}(w)(w_{k+1}^j,\dots,w_2^j)v_1^j\biggr)\]
for all ${\bf v} =\sum_{j}(v_{k+1}^j, w_{k+1}^j)\otimes\dots\otimes (v_{2}^j, w_{2}^j)\otimes (v_{1}^j, w_{1}^j).$

We use the notation $\int_{a}^{b}\widehat{f}({\bf Z})d{\bf Z}^n$ to denote the n-th degree term of  $\int_{a}^{b}\widehat{f}({\bf Z})d{\bf Z}$. When $n=1$, we have
\begin{equation}
\int_{-\infty}^{\infty}\widehat{f}({\bf Z})d{\bf Z}^1=\biggl(\int_{-\infty}^{\infty}dL_t^x, \int_{-\infty}^{ \infty}g(x)dL_t^x\biggr).
\end{equation}
One can therefore use the almost rough path to construct the unique rough path $\int_{-\infty}^{\infty}\widehat{f}({\bf Z})d{\bf Z}$ with roughness $\theta$ in $T^{(3)}(\mathbb{R}^2)$. In particular
\begin{eqnarray}\nn
&&
\quad\int_{-\infty}^{\infty}\widehat{f}({\bf Z})d{\bf Z}^1\nn\\
&&
=\displaystyle\lim_{m(D)\to 0}\sum_{i=1}^{r}Y_{x_{i-1},x_i}^1\nn\\
&&
=\displaystyle\lim_{m(D)\to 0}\sum_{i=1}^{r}\biggl[\widehat{f}({\bf Z}_{x_{i-1}})({\bf Z}_{x_{i-1}, x_i}^1)+\widehat{f}^2({\bf Z}_{x_{i-1}})({\bf Z}_{x_{i-1}, x_i}^2)+ \widehat{f}^3({\bf Z}_{x_{i-1}})({\bf Z}_{x_{i-1}, x_i}^3)\biggr].\nn\\
\end{eqnarray}
The above integral is well defined as the limit of the almost rough path. In particular, we have
\begin{eqnarray}\nn
&&
\widehat{f}({\bf Z}_a)({\bf Z}_{a, b}^1)+\widehat{f}^2({\bf Z}_a)({\bf Z}_{a, b}^2)+ \widehat{f}^3({\bf Z}_a)({\bf Z}_{a, b}^3)=\biggl(L_t^b - L_t^a,g(a)(L_t^b - L_t^a)\biggr) \nn\\
&&
\qquad\qquad\qquad\qquad\qquad\qquad\qquad\qquad\qquad\qquad+ \biggl(0, ({\bf Z}_{a,b}^2)_{2,1}\biggr)\biggr)
\end{eqnarray}
as $ \widehat{f}^3$ is equal to zero for this particular linear one-form. 
Hence, we have the following corollary.
\begin{corollary}\label{MainthirdlevelCorollary}
Let $\frac{3}{2}<\alpha<2$ and g be a continuous function with bounded $q$-variation, $\frac{2}{3-\alpha}<q<4$. 
 Then, the integral $\int_{a}^{b}g(x)dL_t^x$ for $(a,b)\in\Delta$ can be defined as
\begin{equation}
\int_{a}^{b}g(x)dL_t^x=\displaystyle\lim_{m(D)\to 0}\biggl[\sum_{i=1}^{r}g(x_{i-1})(L_t^{x_i} - L_t^{x_{i-1}}) + ({\bf Z}_{x_{i-1}, x_i}^2)_{2,1}\biggr].
\end{equation}
\end{corollary}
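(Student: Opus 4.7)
The plan is to deduce this corollary by applying the standard rough-path integration machinery of Lyons (Chapter 5 of \cite{Lyons2002}) to the specific linear one-form $\widehat{f}:\mathbb{R}^2\to L(\mathbb{R}^2,\mathbb{R}^2)$ given by $\widehat{f}(x,y)(v,w)=(v,yv)$, using the geometric rough path $\mathbf{Z}=(1,\mathbf{Z}^1,\mathbf{Z}^2,\mathbf{Z}^3)$ built in Theorem~\ref{MainthirdlevelTheorem} from $Z_x=(L_t^x,g(x))$. The existence of $\mathbf{Z}$ as a geometric rough path with $\theta$-variation $\theta\in(\tfrac{4}{2h+\alpha-1},4)$ is already in hand, so everything below is a bookkeeping computation with this one-form.

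First, I would spell out the derivatives of $\widehat{f}$: since $\widehat{f}$ is linear in $\xi$ and affine in $z$, its first derivative gives $\widehat{f}^{2}(z)((v_2,w_2)\otimes(v_1,w_1))=(0,w_2 v_1)$, while $\widehat{f}^{k+1}\equiv 0$ for $k\geq 2$, in particular $\widehat{f}^3\equiv 0$. This one-form is Lipschitz in the sense of Stein on any bounded set, which is enough since $L_t^x$ has compact support in $x$ and $g$ is bounded on this support. I would then invoke the construction of Chapter~5 of \cite{Lyons2002} to form the almost rough path $Y=(1,Y^1,Y^2,Y^3)$ with
\begin{align*}
Y_{a,b}^1 &= \widehat{f}(\mathbf{Z}_a)\mathbf{Z}_{a,b}^1+\widehat{f}^2(\mathbf{Z}_a)\mathbf{Z}_{a,b}^2,
\end{align*}
and obtain $\int_a^b \widehat{f}(\mathbf{Z})\,d\mathbf{Z}$ as its unique associated rough path in $T^{(3)}(\mathbb{R}^2)$.

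Next, I would compute the second component of $Y_{a,b}^1$ explicitly. Writing $\mathbf{Z}_{a,b}^1=(L_t^b-L_t^a,\,g(b)-g(a))$, the definition of $\widehat{f}$ gives the second component of $\widehat{f}(\mathbf{Z}_a)\mathbf{Z}_{a,b}^1$ as $g(a)(L_t^b-L_t^a)$; and from the formula for $\widehat{f}^2$ the second component of $\widehat{f}^2(\mathbf{Z}_a)\mathbf{Z}_{a,b}^2$ is precisely $(\mathbf{Z}_{a,b}^2)_{2,1}$, i.e.\ the $(2,1)$-entry of the $2\times 2$ matrix $\mathbf{Z}_{a,b}^2$. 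Since $\widehat{f}^3\equiv 0$, no third-level correction appears. By definition of the rough integral as the limit of Riemann sums of the almost rough path (Theorem~3.2.1 of \cite{Lyons2002}),
\[
\int_{a}^{b}g(x)\,dL_t^x := \bigl(\textstyle\int_a^b \widehat{f}(\mathbf{Z})\,d\mathbf{Z}\bigr)^1_{(2)} = \lim_{m(D)\to 0}\sum_{i=1}^{r}\Bigl[g(x_{i-1})(L_t^{x_i}-L_t^{x_{i-1}})+(\mathbf{Z}_{x_{i-1},x_i}^2)_{2,1}\Bigr],
\]
which is exactly the claimed formula; consistency with the second-level definition in Corollary~\ref{roughpathintegraldef} is automatic because both come from the same geometric rough path.

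The only genuine work is verifying the Lipschitz regularity of $\widehat{f}$ to the order needed ($\widehat{f}$ must be Lip$(\theta)$ with $\theta>3$, so Lip on level $3$) and checking that $Y$ really is an almost rough path controlled by the $\theta$-variation of $\mathbf{Z}$. These are routine for a polynomial one-form of total degree at most $2$ in $(z,\xi)$; the boundedness needed for the global estimates is supplied by restricting to a finite interval $[x',x'']$ containing the compact support of $L_t^\cdot$, where $g$ is bounded by hypothesis. Once these are in place, the conclusion is an immediate application of the rough-path integration theorem, so I do not anticipate a substantive obstacle beyond the verifications already carried out in Sections 3.1--3.3.
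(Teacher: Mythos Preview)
Your proposal is correct and follows essentially the same route as the paper: the corollary is deduced from the discussion immediately preceding it, where the paper applies the rough-path integration machinery of Chapter~5 of \cite{Lyons2002} to the same linear one-form $\widehat{f}(z)\xi=(v,yv)$, observes that $\widehat{f}^3\equiv 0$, and reads off the second component of the almost rough path $Y^1_{a,b}$ as $g(a)(L_t^b-L_t^a)+(\mathbf{Z}^2_{a,b})_{2,1}$. Your added remarks on the Lip$(\theta)$ regularity of $\widehat{f}$ and the use of compact support of $L_t^\cdot$ to localize are exactly the implicit justifications the paper is relying on.
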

 
\subsection{Convergence of rough path integrals for the third level path}
In this section, we will prove the convergence of the rough path integral of the third level path in the $\theta$-variation topology.
 
\begin{proposition}\label{convergencepropthirdlevel}
Let $\frac{3}{2}<\alpha<2$, $\frac{2}{3-\alpha}\leq q<4$.\,Moreover, let $Z_j(x): = (L_t^x, g_j(x)),$ and $ Z(x):=(L_t^x, g(x))$, where $g_j(\cdot), g(\cdot)$ are both continuous and of bounded $q$-variation.\,Suppose $g_j(x)\to g(x)$ as $j\to\infty$ uniformly and the control function ${\bf w}_j(x,y)$ of $g_j$ converges to the control function ${\bf w}(x,y)$ of $g$ as $j\to\infty$ uniformly in $(x,y)$.\,Then as $j\to\infty$ such that the geometric rough path ${\bf Z}_{j}(\cdot)$ associated with $Z_{j}(\cdot)$ converges to the geometric rough path ${\bf Z}(\cdot)$ associated with $Z(\cdot)$ a.s. in $\theta$-variation topology as $j\to\infty$. Here, 
we choose a $\theta$ such that $\frac{4}{2h+\alpha-1}<\theta<4$. In particular, $\int_{-\infty}^{\infty} g_{j}(x)dL_t^x\to \int_{-\infty}^{\infty} g(x)dL_t^x$ a.s. as $j\to\infty.$
\end{proposition}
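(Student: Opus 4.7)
The plan is to mirror the structure of the proof of Proposition \ref{continuitysecondlevel}, extending the telescoping estimate to the third level. Write
\begin{equation*}
d_{3,\theta}({\bf Z}_j^3,{\bf Z}^3)\leq d_{3,\theta}({\bf Z}_j^3,{\bf Z}_j^3(m))+d_{3,\theta}({\bf Z}_j^3(m),{\bf Z}^3(m))+d_{3,\theta}({\bf Z}^3(m),{\bf Z}^3),
\end{equation*}
and handle the three terms separately. For the third term, the convergence $d_{3,\theta}({\bf Z}^3(m),{\bf Z}^3)\to 0$ as $m\to\infty$ follows from Theorem \ref{MainthirdlevelTheorem}. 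For the first term, one must inspect the estimates leading to Theorem \ref{MainthirdlevelTheorem} (i.e.\ Propositions \ref{ThirdLevelVariationProposition} and \ref{ThirdLevelVariationProposition1}) and verify that all constants depend only on $\theta$, $h$, $c$ in (\ref{eq: 4.2}) and the control function; by the assumed uniform convergence ${\bf w}_j\to {\bf w}$, the corresponding bounds for ${\bf Z}_j$ hold \emph{uniformly in} $j$. Consequently $d_{3,\theta}({\bf Z}_j^3,{\bf Z}_j^3(m))\to 0$ uniformly in $j$ as $m\to\infty$.

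Given $\varepsilon>0$, fix $N$ so large that both the first and third terms are $<\varepsilon/3$ for every $j$, and work from now on with the smooth rough paths ${\bf Z}_j^3(N)$ and ${\bf Z}^3(N)$. These are ordinary iterated integrals of bounded variation paths built from the dyadic decomposition associated with ${\bf w}_{1,j}$ and ${\bf w}_1$ respectively. Since $g_j\to g$ uniformly, $L_t^x$ is common to both, and the dyadic points selected via (\ref{usedlater}) converge (because ${\bf w}_{1,j}\to {\bf w}_1$ uniformly), the piecewise linear interpolants $Z_j(N)$ converge uniformly to $Z(N)$. Iterated integrals of bounded variation paths depend continuously on the paths in the uniform norm, so ${\bf Z}_j^3(N)(x)\to {\bf Z}^3(N)(x)$ uniformly in $x$, and similarly the increments of ${\bf Z}_j^3(N)-{\bf Z}^3(N)$ on any fixed partition tend to zero.

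To promote uniform convergence into $\theta$-variation convergence for the middle term, I would use exactly the Fatou argument already employed in Proposition \ref{continuitysecondlevel}: by the uniform bound on the $\theta/3$-variation of ${\bf Z}_j^3(N)-{\bf Z}^3(N)$ (which comes from the bounded variation estimates at level three, plus the uniform bounds on ${\bf w}_{1,j}$), one may exchange $\limsup_j$ with $\sup_D$, and then use the pointwise convergence of each increment to conclude $d_{3,\theta}({\bf Z}_j^3(N),{\bf Z}^3(N))\to 0$ as $j\to\infty$. Pick $J$ large enough that this last term is $<\varepsilon/3$ for $j\geq J$; the triangle inequality then gives $d_{3,\theta}({\bf Z}_j^3,{\bf Z}^3)<\varepsilon$.

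Combining the third-level convergence with the already proven first- and second-level convergence (Theorem \ref{FirstLevelMainTheorem} and Proposition \ref{continuitysecondlevel}) yields the stated convergence of the full geometric rough path in the $\theta$-variation topology. The statement $\int_{-\infty}^\infty g_j(x)\,d_x L_t^x\to \int_{-\infty}^\infty g(x)\,d_x L_t^x$ then follows from Corollary \ref{MainthirdlevelCorollary}, since the right-hand side is determined continuously by $({\bf Z}^1,{\bf Z}^2)$, which converge. The main obstacle will be the bookkeeping in the first term: one has to re-examine the long chain of estimates culminating in (\ref{eq: 4.23}) and confirm that all constants depend only on the control ${\bf w}_{1,j}$ and on the constant $c$ in (\ref{eq: 4.2}) associated with $Z_j$, so that the uniform convergence of ${\bf w}_j$ to ${\bf w}$ produces genuine uniformity in $j$.
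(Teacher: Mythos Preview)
Your proposal is correct and follows essentially the same approach as the paper: the paper's proof simply says ``by similar argument as in Proposition \ref{continuitysecondlevel}, one can show the convergence of the third level path,'' and you have spelled out precisely that argument---the three-term telescoping in $d_{3,\theta}$, uniformity in $j$ of the estimates from Propositions \ref{ThirdLevelVariationProposition} and \ref{ThirdLevelVariationProposition1} via the uniform convergence of the controls, and the Fatou-type exchange for the smooth middle term. Your version is in fact more detailed than the paper's own proof, which is only a one-paragraph sketch.
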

\begin{proof} By the reasoning given above and under the conditions given in the proposition, one can obtain the geometric rough path ${\bf Z}_j(\cdot)$ associated with $Z_j(\cdot)$, and also the smooth rough path ${\bf Z}_j(m)$.  Here the ${\bf Z}_j$ is defined as  ${\bf Z}_j = (1, {\bf Z}_j^1, {\bf Z}_j^2,  {\bf Z}_j^3)$ while ${\bf Z} = (1, {\bf Z}^1, {\bf Z}^2, {\bf Z}^3)$, similarly we have ${\bf Z}_j(m) = (1, {\bf Z}_j^1(m), {\bf Z}_j^2(m), {\bf Z}_j^3(m))$ while ${\bf Z}(m) = (1, {\bf Z}^1(m), {\bf Z}^2(m), {\bf Z}^3(m))$. The convergence of ${\bf Z}_j(\cdot)\to {\bf Z}(\cdot)$  in $\theta$-variation means the convergence of corresponding level path in $\theta$-variation when $g_j(\cdot)\to g(\cdot)$ as $j\to\infty$. We have discussed the convergence of first and second level in Proposition \ref{continuitysecondlevel}.  By similar argument as in Proposition \ref{continuitysecondlevel}, one can show the convergence of the third level path.
\end{proof}
Next, we prove that in fact the proposition above is also true for function $g$ being of bounded $q$-variation  $(\frac{2}{3-\alpha}\leq q<4$), but not being continuous.
 
\begin{theorem}
Let g(x) be a  c\`adl\`ag path with bounded $q$-variation $(\frac{2}{3-\alpha}\leq q<4$). Then
\[
\int_{x'}^{x''} L_t^xdg(x) = \int_{x'}^{\tau_{\delta}(x'')} L_{t,\delta}(y)dg_{\delta}(y).\]
\end{theorem}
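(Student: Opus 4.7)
The plan is to mirror the structure of Theorem \ref{continuousthmsecondlevel}, but now within the third-level rough path framework developed in Section 3.3. The key enabling result is Corollary \ref{MainthirdlevelCorollary}, which defines $\int L_t^x\, dg(x)$ as a rough path integral for continuous $g$ of bounded $q$-variation with $\frac{2}{3-\alpha}\leq q<4$.

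First I would decompose $g=g^c+h$, where $g^c$ is the continuous part and $h$ the jump part supported on the at-most-countable discontinuity set $\{x_r\}$. By the definition in equation (3.40),
\[
\int_{x'}^{x''} L_t^x\,dg(x) = \int_{x'}^{x''} L_t^x\,dg^c(x) + \sum_r L_t^{x_r}\bigl(h(x_r)-h(x_r-)\bigr),
\]
and the first term on the right is a well-defined rough path integral by Corollary \ref{MainthirdlevelCorollary}. On the transformed interval $[x',\tau_\delta(x'')]$, the path $g_\delta$ coincides with $g^c$ on the portion parametrised by $\tau_\delta$ and is linear on each inserted gap $[\tau_\delta(x_r-),\tau_\delta(x_r))$; by continuity of $L_t^x$ in $x$, the lifted local time $L_{t,\delta}$ is constant equal to $L_t^{x_r}$ on this gap (as observed right after Definition \ref{deftau}).

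The next step is to match the jump contributions. On each gap, the almost rough path representation of Corollary \ref{MainthirdlevelCorollary} gives the first level contribution
\[
L_{t,\delta}(\tau_\delta(x_r-))\bigl(g_\delta(\tau_\delta(x_r))-g_\delta(\tau_\delta(x_r-))\bigr)=L_t^{x_r}\bigl(h(x_r)-h(x_r-)\bigr),
\]
and the second level correction vanishes by exactly the calculation used in the proof of Theorem \ref{continuousthmsecondlevel}, since $L_{t,\delta}$ is constant on the gap:
\[
\sum_r\bigl(({\bf Z}_\delta)^2_{\tau_\delta(x_r-),\tau_\delta(x_r)}\bigr)_{1,2}=\sum_r\int_{\tau_\delta(x_r-)}^{\tau_\delta(x_r)}\bigl(L_{t,\delta}(y)-L_{t,\delta}(\tau_\delta(x_r-))\bigr)dg_\delta(y)=0.
\]
A third level correction is now present, but it involves iterated integrals of mixed tensor type against $(dL_{t,\delta},dg_\delta)$; since $dL_{t,\delta}\equiv 0$ on each gap, every component of $({\bf Z}_\delta^3)_{\tau_\delta(x_r-),\tau_\delta(x_r)}$ that carries at least one $dL_{t,\delta}$ factor is zero, and the remaining pure $dg_\delta$ third order iterated integral does not enter the expression for $\int L_{t,\delta}(y)\,dg_\delta(y)$ (it is killed by the relevant component of the Lipschitz one-form $\widehat f$ used just before Corollary \ref{MainthirdlevelCorollary}, since $\widehat f^3=0$ for the linear one-form producing $\int L_{t,\delta}\,dg_\delta$).

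The main obstacle is precisely this verification that the third-level correction over the gap intervals does not corrupt the matching between $\int L_t^x\,dh(x)$ and its image on the extended interval. Once this is in place, combining the continuous and jump pieces exactly as in Theorem \ref{continuousthmsecondlevel} gives
\[
\int_{x'}^{\tau_\delta(x'')}L_{t,\delta}(y)\,dg_\delta(y) = \int_{x'}^{\tau_\delta(x'')} L_{t,\delta}(y)\,dg_\delta^c(y)+\int_{x'}^{\tau_\delta(x'')}L_{t,\delta}(y)\,dh_\delta(y),
\]
where the first integral equals $\int_{x'}^{x''}L_t^x\,dg^c(x)$ (because $g_\delta^c$ does not see the inserted linear segments and $\tau_\delta$ is just a reparametrisation on the continuous portion) and the second equals $\sum_r L_t^{x_r}(h(x_r)-h(x_r-))$ by the jump matching above. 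This yields the claimed equality and completes the proof.
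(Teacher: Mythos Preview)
Your proposal is correct and follows essentially the same approach as the paper, which simply states that the proof is similar to Theorem \ref{continuousthmsecondlevel}. You have in fact supplied more detail than the paper does, correctly identifying and handling the one new point in the third-level setting---namely that the third-level correction over each inserted gap does not contribute, because $\widehat{f}^3=0$ for the linear one-form (so Corollary \ref{MainthirdlevelCorollary} involves only first- and second-level terms) and the second-level term vanishes on the gaps exactly as in Theorem \ref{continuousthmsecondlevel}.
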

The proof of the Theorem 3.22 is similar to Theorem \ref{continuousthmsecondlevel}.\,Based on Theorem 3.22 and Proposition \ref{convergencepropthirdlevel}, one can prove the following proposition
\begin{proposition}
Let $\alpha\in(\frac{3}{2},2)$, $\frac{2}{3-\alpha}\leq q<3$, one can choose $\theta$ such that $\frac{4}{2h+\alpha-1}<\theta<3$.
\,\,Moreover, let $Z_j(x): = (L_t^x, g_j(x)),\, Z(x):=(L_t^x, g(x))$,\,\,where $g_j(.), g(.)$ are both of bounded q-variation, and $g_j$ is continuous and $g$ is c\`adl\`ag with decomposition $g=g_c+h$, where $g_c$ is the continuous part of $g$ and $h$ is the jump part of $g$. Suppose $g_j=g_{c_j}+h_j$ with control function ${\bf w}_{cj}$ and ${\bf w}_{hj}$ such that $g_{cj}\to g_c$ and  ${\bf w}_{cj}\to{\bf w}_{c}$ uniformly, $h_j$ satisfying conditions (3.47) in Proposition \ref{propadded}. Then 
\begin{equation*}
\int_{-\infty}^{\infty} g_j(x)dL_t^x\to\int_{-\infty}^{\infty} g(x)dL_t^x \quad a.s.\quad as\quad j\to\infty.
\end{equation*}
\end{proposition}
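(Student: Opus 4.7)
The plan is to reproduce, at the third level, the proof of the Corollary that follows Proposition~\ref{propadded} in the second-level setting, substituting the third-level tools (Theorem~3.22 and Proposition~\ref{convergencepropthirdlevel}) for the second-level tools (Theorem~\ref{continuousthmsecondlevel} and Proposition~\ref{continuitysecondlevel}). First I would decompose the difference
\[
\int_{-\infty}^{\infty} g_j(x)\,dL_t^x - \int_{-\infty}^{\infty} g(x)\,dL_t^x = \Bigl(\int_{-\infty}^{\infty} g_{cj}(x)\,dL_t^x - \int_{-\infty}^{\infty} g_c(x)\,dL_t^x\Bigr) + \Bigl(\int_{-\infty}^{\infty} h_j(x)\,dL_t^x - \int_{-\infty}^{\infty} h(x)\,dL_t^x\Bigr)
\]
and treat the two summands independently.

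For the continuous piece, Proposition~\ref{convergencepropthirdlevel} applies directly to the paths $Z_{cj}(x):=(L_t^x,g_{cj}(x))$ and $Z_c(x):=(L_t^x,g_c(x))$: by hypothesis $g_{cj}\to g_c$ uniformly together with the control functions ${\bf w}_{cj}\to{\bf w}_c$, so the associated geometric rough paths converge in the $\theta$-variation topology and consequently $\int g_{cj}(x)\,dL_t^x\to\int g_c(x)\,dL_t^x$ a.s. No new argument is needed here.

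For the jump piece, I would repeat the argument of Proposition~\ref{propadded}. Theorem~3.22 (the third-level analogue of Theorem~\ref{continuousthmsecondlevel}) converts the integral against the c\`adl\`ag function $h$ into an integral against the continuous extension $h_\delta$ built by the $\tau_\delta$-device of Definition~\ref{deftau}; integration by parts (valid because $L_t^\delta$ is continuous and $h_\delta$ is of bounded $q$-variation) and hypothesis~(3.47) yield
\[
\int_{-\infty}^{\infty}h_j(x)\,dL_t^x \;=\; -\int_{-\infty}^{\infty}L_t^\delta(y)\,dh_{j\delta}(y) \;=\; -\int_{-\infty}^{\infty}L_t^\delta(y)\,dh_{\delta j}(y).
\]
Since $h_{\delta j}$ are continuous and converge to $h_\delta$ uniformly together with their control functions, Proposition~\ref{convergencepropthirdlevel} applied to $(L_t^\delta(y),h_{\delta j}(y))\to(L_t^\delta(y),h_\delta(y))$ gives $\int L_t^\delta\,dh_{\delta j}\to \int L_t^\delta\,dh_\delta$ a.s.; reversing integration by parts and invoking Theorem~3.22 once more identifies the limit with $\int h(x)\,dL_t^x$.

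Adding the two limits completes the proof. I do not expect a new analytical obstacle: the $\tau_\delta$-construction is independent of the level of the rough path, the third-level continuity result Proposition~\ref{convergencepropthirdlevel} is already available, and hypothesis~(3.47) imported from Proposition~\ref{propadded} is exactly what is needed to make the jump-part control functions on the extended interval converge uniformly. The only bookkeeping to watch is that the $\tau_\delta$-extensions of $h_j$ and $h$ live on (different but nested) intervals $[x',\tau_{\delta,j}(x'')]$ and $[x',\tau_\delta(x'')]$, which causes no difficulty because all integrands vanish outside the compact support of $L_t^\cdot$ and any fictitious linear pieces can be absorbed into the approximating control ${\bf w}_{hj}$.
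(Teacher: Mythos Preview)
Your proposal is correct and follows essentially the same approach as the paper, which simply states that the result follows from Theorem~3.22 and Proposition~\ref{convergencepropthirdlevel} (mirroring how Corollary~3.14 was deduced from Theorem~\ref{continuousthmsecondlevel} and Proposition~\ref{propadded} at the second level). Your write-up spells out the decomposition into continuous and jump parts and the chain of identities for the jump piece in more detail than the paper does, but the underlying argument is identical.
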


Recall $\rho$ as the mollifier and define
\begin{equation*}
h_j(x)=\int_0^2 \rho(z)h(x-\frac{z}{j})dz
\end{equation*}
and $h_{j\delta}$ in the same way as $G_\delta$, so $h_{j\delta}=h_j$ as $h_j$ is continuous.\,Define $h_\delta$ in the same way as $G_\delta$, then 
\begin{equation*}
h_{\delta j}(y)=\int_0^2 \rho(z)h_\delta(y-\frac{z}{j})dz.
\end{equation*}
Thus by the integration by parts formula and Fubini theorem, we have
\begin{eqnarray*}\nn
&&
\quad\int_{-\infty}^{\infty} L_t^\delta(y)dh_{\delta j}(y)\nn\\
&&
=-\int_{-\infty}^{\infty} h_{\delta j}(y)dL_t^\delta(y)\nn\\
&&
=-\int_{-\infty}^{\infty}\int_0^2 \rho(z)h_\delta(y-\frac{z}{j})dzdL_t^\delta(y)\nn\\
&&
=-\int_0^2\int_{-\infty}^{\infty} h_\delta(y-\frac{z}{j})dL_t^\delta(y)\rho(z)dz.\nn\\
\end{eqnarray*}

By Theorem \ref{continuousthmsecondlevel}, we have
\begin{equation*}
\int_{-\infty}^{\infty}h_\delta(y-\frac{z}{j})dL_t^\delta(y)=\int_{-\infty}^{\infty}h(x-\frac{z}{j})dL_t^x.
\end{equation*}
Hence, 
\begin{eqnarray}\nn
&&
\quad\int_{-\infty}^{\infty}L_t^{\delta}(y)dh_{\delta_j}(y)\nn\\
&&
=-\int_0^2\rho(z)\int_{-\infty}^{\infty} h(x-\frac{z}{j})dL_t^xdz\nn\\
&&
=-\int_{-\infty}^{\infty}\int_0^2 h(x-\frac{z}{j})\rho(z)dzdL_t^x\nn\\
&&
=-\int_{-\infty}^{\infty}h_j(x)dL_t^x\nn\\
&&
=\int_{-\infty}^{\infty}L_t^xdh_j(x)\nn\\
&&
=\int_{-\infty}^{\infty}L_t^\delta(x)dh_{j\delta}(x),
\end{eqnarray}

The last equality follows from Theorem \ref{continuousthmsecondlevel}.
This indicates that condition (\ref{three star}) in Proposition \ref{propadded}, which is also needed in Proposition 3.23, is satisfied.

The following theorem summarizes the main results of the paper.
\begin{theorem}Let  $X = (X_t)_{t\geq0}$ be a symmetric $\alpha$-stable process and $f: \mathbb{R} \to \mathbb{R}$ be absolutely continuous, locally bounded function and has $(\alpha-1)^{th}$ fractional order derivative ${\triangledown^{\alpha-1}_-f(x)}$ which is locally bounded.\,Assume ${\triangledown^{\alpha-1}_-f(x)}$ is of bounded $q$-variation, where $1\leq q < 4$.\,Then we have the following extended version of It\^o's formula
\begin{eqnarray}\nn
&&
f(X_t) = f(X_0) + \int_{0}^{t}\triangledown_- f(X_s)dX_s\nn\\
&&
\qquad\qquad+ \int_{0}^{t}\int_{\mathbb{R}}\biggl(f(X_{s-} +y)-f(X_{s-})\biggr)
\tilde{N}(dy,ds)-{C_{\alpha}}\int_{-\infty}^{\infty}\triangledown^{\alpha-1}_-f(x)d_x L_t^x,\nn\\
\end{eqnarray}
where $C_{\alpha} = \frac{\pi^{\frac{1}{2}}\Gamma(1-\frac{2}{\alpha})}{\alpha 2^{\alpha-1}\Gamma(\frac{1+\alpha}{2})}$.
 
The integral $\int_{-\infty}^{\infty}\triangledown^{\alpha-1}_-f(x)d_x L_t^x$ is a Lebesgue-Stieltjes integral when q=1, a Young integral when $1 < q < \frac{2}{3-\alpha}$ for $1<\alpha<2$ and a rough path integral when $\frac{2}{3-\alpha}\leq q < 4$ for $\frac{3}{2}<\alpha<2$ respectively.
\end{theorem}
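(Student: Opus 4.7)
The plan is to derive the final It\^o formula as a unified consequence of a single mollification-and-limit strategy, invoking whichever integration theory applies on the given range of $q$. The case $q=1$ is the classical It\^o--Tanaka situation, and the case $1<q<\frac{2}{3-\alpha}$ is precisely Theorem \ref{MainYoungTheorem}, so the new content lies in the rough-path range $\frac{2}{3-\alpha}\leq q<4$ under the assumption $\frac{3}{2}<\alpha<2$. In all three cases the mechanical skeleton of the proof is the same; only the limiting theorem for the local-time integral changes.

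First I would mollify $f$ as $f_n(x)=\int_\mathbb{R}\rho_n(x-y)f(y)\,dy$, so that each $f_n\in C^\infty$ is locally bounded and the classical It\^o formula for stable processes (obtained earlier in Section 2 by specialising the general L\'evy It\^o formula) applies. Writing the fractional-Laplacian term as $-C_\alpha\int_{-\infty}^\infty \triangledown^{\alpha-1}f_n(x)\,d_x L_t^x$ via the occupation times formula and Remark \ref{updatedremark} yields the target identity with $f_n$ in place of $f$. Every term other than the local-time term passes to the limit by standard arguments: local boundedness of $\triangledown_-f$ and absolute continuity of $f$ handle the drift integral, while dominated convergence together with the $L^2$-isometry for compensated Poisson integrals handles the two jump terms.

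The crux is therefore the convergence
\begin{equation*}
\int_{-\infty}^\infty \triangledown^{\alpha-1}f_n(x)\,d_x L_t^x \longrightarrow \int_{-\infty}^\infty \triangledown_-^{\alpha-1}f(x)\,d_x L_t^x.
\end{equation*}
In the Young range this is precisely equation (\ref{eq: 2.15}) of Theorem \ref{LessSmoothTheorem}. In the rough-path range I would appeal to Proposition \ref{convergencepropthirdlevel}, with the c\`adl\`ag extension (Proposition 3.23) handling any discontinuities of $\triangledown_-^{\alpha-1}f$, taking $g_j:=\triangledown^{\alpha-1}f_j$ and $g:=\triangledown_-^{\alpha-1}f$. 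Pointwise convergence $g_j(x)\to g(x)$ then follows from the averaging representation (\ref{fd}), left-continuity of $\triangledown_-^{\alpha-1}f$, and Lebesgue's dominated convergence theorem, exactly as in the proof of Theorem \ref{LessSmoothTheorem}.

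The main obstacle is verifying the hypothesis of Proposition \ref{convergencepropthirdlevel} that the control function $\mathbf{w}_n$ of $\triangledown^{\alpha-1}f_n$ converges uniformly to the control function $\mathbf{w}$ of $\triangledown_-^{\alpha-1}f$. My plan is to exploit (\ref{fd}) together with Jensen's inequality, as in the proof of Theorem \ref{LessSmoothTheorem}, to dominate $\mathbf{w}_n$ by an average of translates of $\mathbf{w}$ and obtain a uniform bound through $\mathbf{w}(N-z/n)-\mathbf{w}(-N-z/n)\leq \mathbf{w}(N)$. Pointwise convergence $\mathbf{w}_n\to\mathbf{w}$ then follows from left-continuity of $\triangledown_-^{\alpha-1}f$ and the countability of its jump set, and a monotonicity argument upgrades this to uniform convergence on the compact support of $L_t^{\cdot}$. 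Once this control-function convergence is secured, Proposition \ref{convergencepropthirdlevel} delivers the required limit and completes the proof.
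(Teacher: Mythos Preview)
Your proposal is correct and follows essentially the same approach as the paper: mollify $f$, apply the classical It\^o formula to $f_n$, and pass to the limit using Proposition 3.23 for the rough-path range. The paper's own proof is extremely terse, simply citing ``a standard smoothing argument and taking limit using Proposition 3.23,'' together with the computation (3.113) which verifies condition (\ref{three star}) of Proposition \ref{propadded} for the specific mollifier $\rho_n$. You give more detail on the control-function convergence for the continuous part, while the paper emphasises the verification of (\ref{three star}) for the jump part; these are complementary pieces of the same argument, and your identification of the control-function hypothesis as the main technical point is on target.
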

\begin{proof}
We have already showed that the integral $\int_{-\infty}^{\infty}\triangledown^{\alpha-1}_-f(x)d_x L_t^x$ can be defined as a Young integral when $1 < q < \frac{2}{3-\alpha}$ for $1<\alpha<2$.\,\,Based on the result proved for level 1, level 2, level 3 path and (3.113), as well as  applying a standard smoothing argument and taking limit using Proposition 3.23, one can define the integral $\int_{-\infty}^{\infty}\triangledown^{\alpha-1}_-f(x)d_x L_t^x$ as a rough path integral. 
\end{proof}

\section*{Acknowledgements}
The authors are grateful to the anonymous referees for their constructive comments which have helped to improve significantly the earlier version of this paper.\,We would also like to thank Dr Chunrong Feng for very useful conversations about this paper.\,Huaizhong acknowledges the financial support of Royal Society Newton Advanced Fellowship NA150344.

\section*{References}



\end{document}